\newtheorem{theorem}{Theorem}[section]
\newtheorem{proposition}[theorem]{Proposition}
\newtheorem{lemma}[theorem]{Lemma}
\newtheorem{corollary}[theorem]{Corollary}
\newtheorem{definition}[theorem]{Definition}
\newtheorem{remark}[theorem]{Remark}
\newtheorem{notation}[theorem]{Notation}
\newenvironment{proof}{{\noindent \sc Proof.}}{\hfill $\Qed$\\}
\newcommand{\Qed}{\rule{2.5mm}{3mm}}
\newcommand{\Ga}{\Gamma}
\newcommand{\CC}{\mathbb{C}}
\DeclareMathOperator{\mat}{Mat}
\newcommand{\MX}{\mat_X(\CC)}
\newcommand{\e}{E^{*}}
\newcommand{\R}{{\cal R}}
\newcounter{case}
\renewcommand{\thecase}{\arabic{case}}
\newcounter{subcase}
\numberwithin{subcase}{case}
\begin{document}


\begin{center}
{\bf\Large Distance-regular graphs with classical parameters that support a uniform structure: case $q \le 1$} \\ [+4ex]
Blas Fernández{\small$^{a,b}$},  
Roghayeh Maleki{\small$^{a,b}$},  
\v Stefko Miklavi\v c{\small$^{a, b, c}$},
\\
Giusy Monzillo{\small$^{a, b,}$\footnote{Corresponding author e-mail: giusy.monzillo@famnit.upr.si}} 
\\ [+2ex]
{\it \small 
$^a$University of Primorska, UP IAM, Muzejski trg 2, 6000 Koper, Slovenia\\
$^b$University of Primorska, UP FAMNIT, Glagolja\v ska 8, 6000 Koper, Slovenia\\
$^c$IMFM, Jadranska 19, 1000 Ljubljana, Slovenia}
\end{center}


\begin{abstract}
Let $\Ga=(X,\mathcal{R})$ denote a finite, simple, connected, and undirected non-bipartite graph with vertex set $X$ and edge set $\mathcal{R}$. Fix a vertex $x \in X$, and define $\mathcal{R}_f = \mathcal{R} \setminus \{yz \mid \partial(x,y) = \partial(x,z)\}$, where $\partial$ denotes the path-length distance in $\Ga$. Observe that the graph $\Ga_f=(X,\mathcal{R}_f)$ is bipartite. We say that $\Ga$ supports a uniform structure with respect to $x$  whenever $\Ga_f$ has a uniform structure with respect to $x$.

Assume  that $\Ga$ is a distance-regular graph with classical parameters $(D,q,\alpha,\beta)$ with $q \le 1$. Recall that $q$ is an integer, which is not equal to $0$ or $-1$. The purpose of this paper is to study when $\Ga$ supports a uniform structure with respect to $x$.  The main result of the paper is a complete classification  of graphs with classical parameters with $q\leq 1$ and  $D \ge 4$ that support a uniform structure with respect to $x$.
\end{abstract}

\begin{quotation}
\noindent {\em Keywords:} 
distance-regular graphs, uniform posets, Terwilliger algebra. 

\end{quotation}

\begin{quotation}
\noindent 
{\em Math. Subj. Class.:}  05E99, 05C50.
\end{quotation}


\section{Introduction}
\label{sec:intro}

The notion of a uniform poset was introduced in 1990 by Terwilliger \cite{OldTer}. Vaguely speaking, a graded poset $P$ is uniform if the raising matrix and the lowering matrix of $P$ satisfy certain linear dependencies. In \cite{OldTer}, Terwilliger described the algebraic structure of the uniform posets and displayed eleven infinite families of examples. 
The notion of uniform poset could be easily adopted by bipartite graphs, as bipartite graphs can be viewed as Hasse diagrams for graded posets.  Indeed, let $\Ga$ denote a finite, simple, connected, and undirected bipartite graph with vertex set $X$. Fix a vertex $x \in X$, and define the partial order $\le$ on $X$ as follows. For vertices $y,z$ of $\Ga$ let $y \le z$ whenever $\partial(x,y)+\partial(y,z)=\partial(x,z)$, where $\partial$ denotes the path-length distance function in $\Ga$. 

Let $A$ denote the adjacency matrix of $\Ga$. The above poset structure on $X$ induces a decomposition $A=L+R$, where $L=L(x)$ (resp. $R=R(x)$) is the lowering matrix (resp. raising matrix) of $\Ga$ with 
respect to $x$. For vertices $y,z$ of $\Ga$ the $(y,z)$-entry of $L$ is $1$ if $z$ covers $y$, and $0$ otherwise. The matrix $R$ is the transpose of $L$. We say that $\Ga$ has a {\em uniform structure with respect to} $x$ if a certain
linear dependency among $RL^2$, $LRL$, $L^2R$, $L$ is satisfied (see Section \ref{sec:uniform} for more details). Uniform structures of $Q$-polynomial bipartite distance-regular graphs were studied in detail in \cite{MikTer}, where it was shown that except for one special case a uniform structure is always attained.

Assume now that $\Ga=(X,\mathcal{R})$ is a non-bipartite graph with vertex set $X$ and edge set $\mathcal{R}$. Fix a vertex $x \in X$, and define $\mathcal{R}_f = \mathcal{R} \setminus \{yz \mid \partial(x,y) = \partial(x,z)\}$. Observe that the graph $\Ga_f=(X,\mathcal{R}_f)$ is bipartite. We say that $\Ga$ {\em supports a uniform structure with respect to} $x$ if $\Ga_f$ has a uniform structure with respect to $x$. The study of non-bipartite graphs that support a uniform structure was initiated by Hou, Hou and Gao in \cite{hou2018folded}, where they determined conditions for which a folded $(2D+1)$-hypercube supports a uniform structure. 

The main purpose of this paper is to investigate which non-bipartite $Q$-polynomial distance-regular graphs support a uniform structure. We will concentrate on distance-regular graphs with classical parameters (which are also $Q$-polynomial). Let $\Ga$ denote a distance-regular graph with classical parameters $(D,q,\alpha,\beta)$ with $D \ge 3$. Recall that $q$ is an integer, different from $0$ and $-1$. The graph $\Ga$ is said to be of {\em negative type} whenever $q \le -2$. Our first main result concerns graphs of negative type. We show that if $\Ga$ is of negative type with $D \ge 4$, then $\Ga$ supports a uniform structure with respect to $x$ if and only if $\Ga$ is the dual polar graph $^2 A_{2D-1}(-q)$, see Theorem \ref{thm:main_neg_type}. Our second main result concerns graphs with classical parameters with $q=1$. We give a complete classification of these graphs that support a uniform structure with respect to $x$, see Theorem \ref{thm:main}. We will study graphs with classical parameters with $q \ge 2$ in our future paper. 

This paper is organized as follows. In Section \ref{sec:ter}, we recall the definition and some basic properties of the Terwilliger algebra $T$ of a graph $\Ga$. In Section \ref{sec:uniform}, we give a formal definition of a uniform structure for bipartite graphs. We also prove some auxiliary results. We then discuss distance-regular graphs, the $Q$-polynomial property,  and irreducible $T$-modules of a distance-regular graph in Sections  \ref{sec:drg} and \ref{endp1}. In Sections \ref{sec3}, \ref{sec:a1=0}, and \ref{sec:rnp} we consider  distance-regular graphs of negative type and prove our first main result. Finally, in Section \ref{sec:q=1} we consider  distance-regular graphs with classical parameters $(D,1,\alpha,\beta)$ and prove our second main result.


\section{Terwilliger algebra of a graph}
\label{sec:ter}

Throughout this paper, all graphs considered will be finite, simple, connected, and undirected. Let $\Ga=(X,\mathcal{R})$ denote a graph with vertex set $X$ and edge set $\mathcal{R}$. Let $\partial$ denote the path-length distance function of $\Ga$. The diameter $D$ of $\Ga$ is defined as $\max\{\partial(x,y) \mid x,y \in X \}$. In this section, we recall the definition of the Terwilliger algebra of $\Ga$. Let $\CC$ denote the field of complex numbers. Let $\MX$ denote the $\CC$-algebra consisting of all matrices whose rows and columns are indexed by $X$ and whose entries are in $\CC$. Let $V$ denote the vector space over $\CC$ consisting of column vectors whose coordinates are indexed by $X$ and whose entries are in $\CC$. We observe that $\MX$ acts on $V$ by left multiplication. We call $V$ the \emph{standard module}. We endow $V$ with the Hermitian inner 
product $\langle \, , \, \rangle$ that satisfies $\langle u,v \rangle = u^{\top}\overline{v}$ for 
$u,v \in V$, where $\top$ denotes the transposition and $\overline{\phantom{v}}$ denotes the complex conjugation. For $y \in X$, let $\widehat{y}$ denote the element of $V$ with $1$ in the ${y}$-coordinate and $0$ in all other coordinates. We observe that $\{\widehat{y}\;|\;y \in X\}$ is an orthonormal basis for $V$.

Let $A \in \MX$ denote the \emph{adjacency matrix} of $\Gamma$:
$$\left( A\right) _{yz}=
\begin{cases}
	\hspace{0.2cm} 1 \hspace{0.5cm} \text{if} & \partial(y,z)=1,   \\
	\hspace{0.2cm} 0 \hspace{0.5cm} \text{if} &  \partial(y,z) \neq 1
\end{cases} \qquad (y,z \in X).
$$
Let $M$ denote the subalgebra of $\MX$ generated by $A$. The algebra $M$ is called the \emph{adjacency algebra} of $\Gamma$. Observe that $M$ is commutative.

In order to recall the dual idempotents of $\Gamma$, we fix a vertex $x \in X$ for the rest of this section. Let $\varepsilon=\varepsilon(x)$ denote the eccentricity of $x$, that is, $\varepsilon = \max \{\partial(x,y) \mid y \in X\}$. For $ 0 \le i \le \varepsilon$, let $E_i^*=E_i^*(x)$ denote the diagonal matrix in $\MX$ with $(y,y)$-entry given by
\begin{eqnarray*}
	\label{den0}
	(\e_i)_{y y} = \left\{ \begin{array}{lll}
		1 & \hbox{if } \; \partial(x,y)=i, \\
		0 & \hbox{if } \; \partial(x,y) \ne i \end{array} \right. 
	\qquad (y \in X).
\end{eqnarray*}
We call $\e_i$ the \emph{$i$-th dual idempotent} of $\Gamma$ with respect to $x$ \cite[p.~378]{Terpart1}. We observe 
(ei)  $\sum_{i=0}^\varepsilon E_i^*=I$; 
(eii) $\overline{E_i^*} = E_i^*$ $(0 \le i \le \varepsilon)$; 
(eiii) $E_i^{*\top} = E_i^*$ $(0 \le i \le \varepsilon)$; 
(eiv) $E_i^*E_j^* = \delta_{ij}E_i^* $ $(0 \le i,j \le \varepsilon)$,
where $I$ denotes the identity matrix of $\MX$. It follows that $\{E_i^*\}_{i=0}^\varepsilon$ is a basis for a commutative subalgebra $M^*=M^*(x)$ of $\MX$. The algebra $M^*$ is called the \emph{dual adjacency algebra} of $\Gamma$ with respect to $x$ \cite[p.~378]{Terpart1}. Note that for $0 \le i \le \varepsilon$ we have
$\e_i V = {\rm Span} \{ \widehat{y} \mid y \in X, \partial(x,y)=i\}$, 
and  
\begin{equation}
	\label{vsub}
	V = E_0^*V + E_1^*V + \cdots + E_\varepsilon^*V \qquad \qquad {\rm (orthogonal\ direct\ sum}). \nonumber 
\end{equation}
The subspace $\e_i V$ is known as the \emph{$i$-th subconstituent of $\Gamma$ with respect to $x$}. 
For convenience, $\e_{i}$ is assumed to be the zero matrix of $\MX$ for $i<0$ and $i>\varepsilon$.

The \emph{Terwilliger algebra of $\Gamma$ with respect to $x$}, denoted by $T=T(x)$,  is the  subalgebra of $\MX$ generated by $M$ and $M^*$ \cite{Terpart1}. Observe that $T$ is generated by the adjacency matrix $A$ and the dual idempotents $E^*_i \, (0\leq i\leq \varepsilon)$, and so it is finite-dimensional. In addition, $T$ is closed under the conjugate-transpose map by construction, and is hence semi-simple. For a vector subspace $W \subseteq V$, we denote by $TW$ the subspace $\{B w \mid B \in T, w \in W\}$. Let us now recall the lowering, the flat, and the raising matrices of $T$. 

\begin{definition} \label{def2} 
	Let $\Gamma=(X,\R)$ denote a graph with diameter $D$. Pick $x \in X$, let $\varepsilon=\varepsilon(x)$ denote the eccentricity of $x$ and  $T=T(x)$ denote the Terwilliger algebra of $\Gamma$ with respect to $x$. Define $L=L(x)$, $F=F(x)$, and $R=R(x)$ in $\MX$ by
	\begin{eqnarray}\label{defLR}
		L=\sum_{i=1}^{\varepsilon}E^*_{i-1}AE^*_i, \hspace{1cm}
		F=\sum_{i=0}^{\varepsilon}E^*_{i}AE^*_i, \hspace{1cm}
		R=\sum_{i=0}^{\varepsilon-1}E^*_{i+1}AE^*_i. \nonumber 
	\end{eqnarray}
	We refer to $L$, $F$, and $R$ as the \emph{lowering}, the \emph{flat}, and the \emph{raising matrix with respect to $x$}, respectively.
\end{definition}
Note that, by definition, $L, F, R \in T$, $F=F^{\top}$, $R=L^{\top}$, and $A=L+F+R$.
Observe that for $y,z \in X$, the $(z,y)$-entry of $L$ equals $1$ if $\partial(z,y)=1$ and $\partial(x,z)= \partial(x,y)-1$ and $0$ otherwise. The $(z,y)$-entry of  $F$ is equal to $1$ if $\partial(z,y)=1$ and $\partial(x,z)= \partial(x,y)$ and $0$ otherwise. Similarly, the $(z,y)$-entry of $R$ equals $1$ if $\partial(z,y)=1$ and $\partial(x,z)= \partial(x,y)+1$ and $0$ otherwise. Consequently, for $v \in \e_i V \; (0 \le i \le \varepsilon)$ we have
\begin{equation}
	\label{eq:LRaction}
	L v \in \e_{i-1} V, \qquad  F v \in \e_{i} V, \qquad R v \in \e_{i+1} V.
\end{equation}
Observe also that $\Gamma$ is bipartite if and only if $F=0$.%

We now recall modules of $T$. By a \emph{$T$-module} we mean a subspace $W$ of $V$ such that $TW \subseteq W$. Let $W$ denote a $T$-module. Then, $W$ is said to be {\em irreducible} whenever $W$ is nonzero and $W$ contains no $T$-modules other than $0$ and $W$. Since the algebra $T$ is semi-simple, any $T$-module is an orthogonal direct sum of irreducible $T$-modules.

Let $W$ denote an irreducible $T$-module. We observe that $W$ is an orthogonal direct sum of the non-vanishing subspaces $E_i^*W$ for $0 \leq i \leq \varepsilon$. The \emph{endpoint} of $W$ is defined as $r:=r(W)=\min \{i \mid 0 \le i\le \varepsilon, \; \e_i W \ne 0 \}$, and the \emph{diameter} of $W$ as $d:=d(W)=\left|\{i \mid 0 \le i\le \varepsilon, \; \e_i W \ne 0 \} \right|-1 $. It turns out that $\e_iW \neq 0$ if and only if $r \leq i \leq r+d$ $(0 \leq i \leq \varepsilon)$. The module $W$ is said to be \emph{thin} whenever $\dim(E^*_iW)\leq1$ for $0 \leq i \leq \varepsilon$. We say that two $T$-modules $W$ and $W^{\prime}$ are \emph{$T$-isomorphic} (or simply \emph{isomorphic}) whenever there exists a vector space isomorphism $\sigma: W \rightarrow W^{\prime}$ such that $\left( \sigma B - B\sigma \right) W=0$ for all $B \in T$. Note that isomorphic irreducible $T$-modules have the same endpoint and the same diameter. It turns out that $T\widehat{x}=\{B\widehat{x} \; | \; B \in T\}$ is the unique irreducible $T$-module with endpoint $0$. We refer to $T\widehat{x}$ as the {\em trivial $T$-module}.


\section{Graphs that support a uniform structure}
\label{sec:uniform}

In this section, we discuss the uniform property of bipartite graphs. As already mentioned, the uniform property was first defined for graded partially ordered sets \cite{OldTer}. The definition was later extended to bipartite distance-regular graphs in \cite{MikTer}. We first extend this definition to an arbitrary bipartite graph. Let $\Gamma=(X, \mathcal{R})$ denote a bipartite graph and let $V$ denote the standard module of $\Ga$. Fix $x\in X$, and let $\varepsilon=\varepsilon(x)$ denote the eccentricity of $x$. Let $T=T(x)$, $L$, and $R$ denote the corresponding Terwilliger algebra, lowering, and raising matrix, respectively.
\begin{definition}\label{3diag}
	 A \emph{parameter matrix} $U=(e_{ij})_{1\leq i,j\leq \varepsilon}$ is defined to be a tridiagonal matrix with entries in $\CC$, satisfying the following properties:
	\begin{enumerate}[label=(\roman*)]
		\item $e_{ii}=1$ $(1\leq i\leq \varepsilon)$,
		\item $e_{i, i-1}\neq0$ for $2 \le i \le \varepsilon$ or $e_{i-1,  i}\neq0$ for $2\leq i\leq \varepsilon$, and
		\item  the principal submatrix $(e_{ij})_{s\leq i, \, j\leq t}$ is nonsingular for $1\leq s\leq t\leq \varepsilon$.
	\end{enumerate}
	For convenience we write $e^{-}_i:=e_{i, i-1}$ for $2\leq i\leq \varepsilon$ and $e^{+}_i:=e_{i, i+1}$ for $1\leq i\leq \varepsilon-1$. We also define $e^{-}_1:=0$ and $e^{+}_\varepsilon:=0$. 
\end{definition}

A \emph{uniform structure} of $\Gamma$ with respect to $x$ is a  pair $(U,f)$ where $f=\{f_i\}_{i=1}^\varepsilon$ is a vector in $\CC^\varepsilon$, such that
\begin{align}\label{uniformeq}
	e^{-}_iRL^2+LRL+ e^+_i L^2R=f_iL
\end{align}
 is satisfied on $E^*_iV$ for $1\leq i\leq \varepsilon$, where $\e_i\in T$ are the dual idempotents of $\Gamma$ with respect to $x$. In addition, a \emph{strongly uniform structure} of $\Gamma$ with respect to $x$ is  a uniform structure $(U,f)$ for which $e_i^-\neq0$ for $2\leq i\leq \varepsilon$ and $e_i^+\neq0$ for $1\leq i\leq \varepsilon-1$. If the vertex $x$ is clear from the context, we will simply use (\emph{strongly}) \emph{uniform structure} of $\Gamma$ instead of  (\emph{strongly}) \emph{uniform structure of $\Gamma$ with respect to $x$}. 
The following result of Terwilliger will be crucial in the rest of the paper. 

\begin{theorem}[{\cite[Theorem 2.5]{OldTer}}]\label{oldpaper}
Let $\Gamma=(X, \mathcal{R})$ denote a bipartite graph and fix $x\in X$. Let $T=T(x)$ denote the corresponding Terwilliger algebra. Assume that $\Gamma$ admits a uniform structure with respect to $x$. Then, the following (i), (ii) hold:
\begin{enumerate}[label=(\roman*),]
\item Every irreducible $T$-module is thin. 
\item Let $W$ denote an irreducible $T$-module with endpoint $r$ and diameter $d$. Then, the isomorphism class of $W$ is determined by $r$ and $d$. 
\end{enumerate}
\end{theorem}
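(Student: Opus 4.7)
Fix an irreducible $T$-module $W$ with endpoint $r$ and diameter $d$. The plan is to construct an ordered basis $\{v_0, v_1, \ldots, v_d\}$ of $W$ with $v_j \in \e_{r+j} W$, on which the action of $L$, $R$, and the $\e_i$ is completely determined by $r$, $d$, and the uniform-structure parameters $e^{\pm}_i, f_i$. Once this is done, thinness (i) is immediate since $v_j$ spans $\e_{r+j}W$, and (ii) follows because the resulting structure constants depend only on $(r,d)$ and on the globally fixed data $(U,f)$.

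Pick a nonzero $v \in \e_r W$. Since $\e_{r-1} W = 0$ we have $Lv = 0$; set $v_j := R^j v$. The core claim is that
\[
L v_{j+1} = \gamma_{j+1}\, v_j \qquad (0 \le j \le d-1)
\]
for some scalar $\gamma_{j+1} \in \CC$ (with $\gamma_0 := 0$). I would prove this by induction on $j$. The base case $Lv_0 = 0$ is immediate. For the inductive step, apply the uniform structure equation on $\e_{r+j} V$ to $v_j$:
\[
e^{-}_{r+j}\, R L^2 v_j + L R L v_j + e^{+}_{r+j}\, L^2 v_{j+1} = f_{r+j}\, L v_j.
\]
Using the inductive hypothesis $L v_k = \gamma_k v_{k-1}$ for $k \le j$, the three terms $R L^2 v_j$, $L R L v_j$, and $f_{r+j} L v_j$ collapse to explicit scalar multiples of $v_{j-1}$. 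Hence, assuming $e^{+}_{r+j} \neq 0$ (guaranteed for the relevant range by one of the two alternatives in Definition~\ref{3diag}(ii); the other alternative is handled symmetrically by a rearrangement of the equation), $L^2 v_{j+1}$ is an explicit scalar multiple of $v_{j-1}$.

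The main obstacle is to upgrade this to the statement that $L v_{j+1}$ itself is a scalar multiple of $v_j$, since this does not follow algebraically from knowledge of $L^2 v_{j+1}$ alone. Here the irreducibility of $W$ is essential: writing $L v_{j+1} = \gamma_{j+1} v_j + u$ for a residual $u \in \e_{r+j} W$, one shows that $u$ must generate a proper $T$-submodule of $W$, and therefore $u=0$. This reduction exploits the adjoint relation $R = L^{\top}$ (valid because $\Ga$ is bipartite) together with the explicit identity on $L^2 v_{j+1}$ just derived to constrain the inner products $\langle u, B u \rangle$ for $B \in T$, and is where most of the technical work concentrates.

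Once the claim is established, $\spanned(v_0, \ldots, v_d)$ is $T$-stable, hence equals $W$ by irreducibility, giving $\dim(\e_{r+j} W) \le 1$ for all $j$ and proving (i). For (ii), the recursion determining $\gamma_{j+1}$ involves only $r$, $j$, $d$, and the fixed parameters $e^{\pm}_i, f_i$, so two irreducible $T$-modules with the same $(r,d)$ carry identical matrix representations of $L$, $R$, $\e_i$ on their respective $v$-bases; the bijection sending basis to basis is the required $T$-isomorphism.
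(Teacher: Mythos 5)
There is a genuine gap, and it sits exactly at the step you yourself flag as ``the main obstacle.'' First, note that the paper does not prove Theorem~\ref{oldpaper} at all: it is quoted from Terwilliger's 1990 uniform-posets paper, whose proof is substantially more involved than your sketch. Your induction correctly shows (when $e^+_{r+j}\neq 0$) that $L^2v_{j+1}$ is a scalar multiple of $v_{j-1}$, but the passage from this to $Lv_{j+1}\in\spanned(v_j)$ is precisely the content of the theorem, and your proposal offers no argument for it. Writing $Lv_{j+1}=\gamma_{j+1}v_j+u$ with $u\in\e_{r+j}W$, the assertion that ``$u$ must generate a proper $T$-submodule of $W$'' is unsupported and, as phrased, circular: in an irreducible module any nonzero vector generates all of $W$, so one must prove directly that $u=0$, and the constraint $Lu\in\spanned(v_{j-1})$ coming from your identity on $L^2v_{j+1}$ is far too weak for that. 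The appeal to $R=L^{\top}$ and to inner products $\langle u,Bu\rangle$ is a gesture, not a proof; this is where Terwilliger's argument does real work, and it uses an ingredient your sketch never touches, namely condition (iii) of Definition~\ref{3diag} (nonsingularity of \emph{all} principal submatrices of the parameter matrix $U$). A proof that never invokes (iii) cannot be complete, since (ii) of the definition alone does not suffice for the conclusion.

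The same omission undermines your part (ii). With $v_j=R^jv$, the uniform relation applied to $v_j$ gives the three-term relation $e^-_{r+j}\gamma_{j-1}+\gamma_j+e^+_{r+j}\gamma_{j+1}=f_{r+j}$ for $1\le j\le d$ (after dividing by $\gamma_j\neq0$), but the relation at $j=0$ is vacuous because $Lv_0=0$, so the ``forward recursion'' has no starting value: $\gamma_1$ is not determined by $r,d$ and the parameters from the recursion alone. One must instead use the boundary condition $Rv_d=0$ at the top and solve the full tridiagonal linear system for $(\gamma_1,\dots,\gamma_d)$; its coefficient matrix is exactly the principal submatrix $(e_{ij})_{r+1\le i,j\le r+d}$ of $U$, and its invertibility --- again condition (iii) --- is what makes the $\gamma_j$ (hence the isomorphism class) depend only on $r$ and $d$. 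Finally, the claim that the case where only the $e^-_i$ are guaranteed nonzero is ``handled symmetrically'' needs justification: the defining relation is not symmetric in $L$ and $R$, and one has to pass to the transposed identity $e^-_iR^2L+RLR+e^+_iLR^2=f_iR$ on $\e_{i-1}V$ and run the induction downward from a vector annihilated by $R$; stating this explicitly would be necessary, though the central gap above would persist there as well.
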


Assume now that $\Ga$ is a non-bipartite graph, and fix a vertex $x$ of $\Ga$. In what follows we define what it means for $\Gamma$ to support a uniform structure with respect to $x$. To this end, we first define a certain graph $\Ga_f$.

\begin{definition}\label{def3.1}
	Let $\Gamma=(X,\mathcal{R})$ denote a non-bipartite graph. Fix $x \in X$ and define $\mathcal{R}_f = \mathcal{R} \setminus \{yz \mid \partial(x,y) = \partial(x,z)\}$. We define ${\Gamma_f}={\Gamma_f(x)}$ to be the graph with vertex set $X$ and edge set $\mathcal{R}_f$. We observe that $\Gamma_f$ is bipartite and connected. We say that $\Ga$ {\em supports a uniform structure with respect to} $x$, if $\Ga_f$ admits a uniform structure with respect to $x$. If the vertex $x$ is clear from the context, we will simply say that $\Ga$ \emph{supports a uniform structure} instead of \emph{supports a uniform structure with respect to $x$}. 
\end{definition}

With reference to Definition \ref{def3.1}, let $\varepsilon=\varepsilon(x)$ denote the eccentricity of $x$ and let $V$ denote the standard module for $\Ga$. Since the vertex set of $\Ga$ is equal to the vertex set of $\Ga_f$, observe that $V$ is also the standard module for $\Ga_f$. Let $T=T(x)$ be the Terwilliger algebra of $\Ga$. Recall that $T$ is generated by the adjacency matrix $A$ and the dual idempotents $\e_i$ ($0\leq i \leq \varepsilon$). Furthermore, we have $A=L+F+R$ where $L,F$, and $R$ are the corresponding lowering, flat, and raising matrices, respectively. Let $T_f=T_f(x)$ be the Terwilliger algebra of $\Ga_f$. As $\Ga_f$ if bipartite, the flat matrix of $\Ga_f$ with respect to $x$ is equal to the zero matrix. Moreover, the lowering and the raising matrices of $\Ga_f$ with respect to $x$ are equal to $L$ and $R$, respectively. It follows that for the adjacency matrix $A_f$ of $\Gamma_f$ we have $A_f=L+R$.  For $0 \le i \le \varepsilon$, note also that the $i$-th dual idempotent of $\Ga_f$ with respect to $x$ is equal to $\e_i$. Consequently, the algebra $T_f$ is generated by the matrices $L, R$, and $\e_i$ ($0\leq i \leq \varepsilon$).

\begin{lemma}
	With the above notation, let $W$ denote a $T$-module. Then,  the following (i), (ii) hold.

	\begin{enumerate}[label=(\roman*)]
		\item $W$ is a $T_f$-module.
		\item  If $W$ is a thin irreducible $T$-module, then $W$ is a thin irreducible $T_f$-module.
	\end{enumerate}
\end{lemma}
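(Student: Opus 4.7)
The plan for (i) is immediate: the algebra $T_f$ is a subalgebra of $T$, because its generators $L$, $R$, and the dual idempotents $E_i^*$ ($0 \le i \le \varepsilon$) all lie in $T$. (The $E_i^*$ are by definition in $M^* \subseteq T$, while $L$ and $R$ were defined in Section \ref{sec:ter} as $\sum_i E_{i-1}^* A E_i^*$ and $\sum_i E_{i+1}^* A E_i^*$, which are in $T$.) Hence every $T$-module is automatically a $T_f$-module.

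For (ii), let $W$ be a thin irreducible $T$-module with endpoint $r$ and diameter $d$. Thinness immediately passes to $T_f$, since the condition $\dim E_i^* W \le 1$ involves only the dual idempotents, which lie in $T_f$. To establish $T_f$-irreducibility I would fix nonzero vectors $v_i \in E_i^* W$ for $r \le i \le r+d$, so that $\{v_i\}$ is a basis of $W$. By (\ref{eq:LRaction}) we may write $L v_i = \ell_i v_{i-1}$ and $R v_i = \rho_i v_{i+1}$ for scalars $\ell_i, \rho_i \in \CC$, with the boundary conventions $\ell_r = 0$ and $\rho_{r+d} = 0$.

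The crux is to show $\ell_i \ne 0$ for $r+1 \le i \le r+d$ (and symmetrically $\rho_i \ne 0$ for $r \le i \le r+d-1$). Suppose for contradiction that $\ell_i = 0$ for some $i \ge r+1$. Then $W'' := \spanned\{v_i, v_{i+1}, \ldots, v_{r+d}\}$ would be stable under $L$ (since $L v_i = 0$ prevents leakage to $v_{i-1} \notin W''$), under $R$ (as $R v_{r+d} = 0$), under $F$ (because $F v_j \in E_j^* W$ is at most one-dimensional, hence stays in $W''$), and under each $E_j^*$. Since $T$ is generated by $A = L+F+R$ and the $E_j^*$, this would exhibit $W''$ as a proper nonzero $T$-submodule of $W$, contradicting irreducibility. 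The argument for $\rho_i$ is symmetric, using $\spanned\{v_r, \ldots, v_i\}$.

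With all $\ell_i, \rho_i$ nonzero in hand, the $T_f$-irreducibility follows quickly. Any nonzero $T_f$-submodule $W' \subseteq W$ decomposes as $W' = \bigoplus_{i=r}^{r+d} E_i^* W'$, and each $E_i^* W'$ is either $0$ or the full one-dimensional $E_i^* W$. Applying $L$ and $R$ shows that the set $S = \{i : E_i^* W' \ne 0\}$ is closed under taking predecessors and successors within $\{r, \ldots, r+d\}$; since $S$ is nonempty, $S = \{r, \ldots, r+d\}$ and $W' = W$. The only mild subtlety throughout is the invocation of thinness (so that $F v_j$ remains a scalar multiple of $v_j$) when checking $F$-stability of the truncation $W''$ in the non-bipartite case; otherwise the argument is routine.
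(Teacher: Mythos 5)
Your proof is correct, and it reaches the conclusion by a somewhat different route than the paper. The paper argues at the level of submodules: it takes a nonzero irreducible $T_f$-submodule $W'\subseteq W$ and uses thinness to see that $\e_i W'=\e_i W$ wherever $\e_iW'\neq 0$; since $F$ leaves each (at most one-dimensional) subconstituent $\e_iW$ invariant, $W'$ is then closed under $L$, $F$, $R$ and the $\e_i$'s, hence is a $T$-module, and $T$-irreducibility forces $W'=W$. You instead first prove the auxiliary non-degeneracy fact that the coefficients $\ell_i,\rho_i$ in $Lv_i=\ell_i v_{i-1}$, $Rv_i=\rho_i v_{i+1}$ are all nonzero, via the truncation $\spanned\{v_i,\dots,v_{r+d}\}$, which is where you invoke $T$-irreducibility (and where thinness again supplies $F$-closure); you then show any nonzero $T_f$-submodule propagates through the whole support using $L$ and $R$ alone. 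Both arguments hinge on the same key point, namely that thinness makes $F$ act as a scalar on each $\e_iW$ so that the missing generator of $T$ costs nothing. The paper's version is shorter; yours avoids appealing to the interval-support structure and semisimplicity underlying the choice of an irreducible $T_f$-submodule, treats arbitrary nonzero $T_f$-submodules directly, and records as a by-product the nonvanishing of the lowering/raising coefficients, a fact the paper re-derives separately inside the proof of Proposition~\ref{tilde}.
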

\begin{proof}
	$(i)$ This is clear since $T_f$ is generated by the matrices $L, R$, and $\e_i$ ($0\leq i \leq \varepsilon$).
  
  \noindent
	$(ii)$ Let $W'\subseteq W$ denote a non-zero irreducible $T_f$-module. Let $r'$ and $d'$ denote the endpoint and the diameter of $W'$, respectively. Observe that by \eqref{eq:LRaction} we have 
	$$ L\e_{r'}W'=0 \,\,\, \mbox{and} \,\,\,  R\e_{r'+d'}W'=0.$$
 Since $W$ is thin  we have  $\e_iW'=\e_iW$ for $r'\leq i \leq r'+d'$.	It follows by  \eqref{eq:LRaction} that $W'$ is also a $T$-module, and so $W'=W$.
\end{proof}

\begin{proposition}\label{ortho}
 With the above notation, let 
 $$ V=V_1+ V_2+\cdots +V_{\ell}\qquad(\mbox{orthogonal direct sum}),$$
  where $V_j  \, (1\leq j \leq \ell)$ are $T$-modules. Fix $0\leq i \leq \varepsilon$. Then, \eqref{uniformeq} holds on $\e_iV$ if and only if \eqref{uniformeq} holds on $\e_iV_j$ for every $1\leq j \leq \ell$. In particular, \eqref{uniformeq} holds on $\e_iV$ if and only if \eqref{uniformeq} holds on $\e_iW$ for every irreducible $T$-module $W$.
\end{proposition}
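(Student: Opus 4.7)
The plan is to observe that equation \eqref{uniformeq} is really the assertion that a single operator lies in the kernel restricted to $E_i^*V$, and then to exploit the fact that $T$-modules are automatically $E_i^*$-invariant. Concretely, set
\[
B_i := e_i^-\,RL^2 + LRL + e_i^+\,L^2R - f_i\,L.
\]
By Definition \ref{def2} we have $L,R\in T$, so $B_i\in T$. Thus each summand $V_j$, being a $T$-module, is invariant under $B_i$; in particular $B_i(E_i^*V_j)\subseteq V_j$.

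Next, since $E_i^*\in T$ and each $V_j$ is a $T$-module, $E_i^*V_j\subseteq V_j$. Combining this with the hypothesis that $V=V_1+\cdots+V_\ell$ is an orthogonal direct sum gives
\[
E_i^*V = E_i^*V_1 + E_i^*V_2 + \cdots + E_i^*V_\ell \qquad (\mbox{orthogonal direct sum}),
\]
because applying $E_i^*$ to each term keeps them inside the corresponding $V_j$, and the $V_j$'s are in direct sum.

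Now the equivalence is immediate. Saying that \eqref{uniformeq} holds on $E_i^*V$ means that $B_i v = 0$ for every $v\in E_i^*V$. Since $E_i^*V_j\subseteq E_i^*V$, this forces $B_i$ to vanish on each $E_i^*V_j$. Conversely, if $B_iw=0$ for every $w\in E_i^*V_j$ and every $j$, then any $v\in E_i^*V$ decomposes as $v=\sum_j v_j$ with $v_j\in E_i^*V_j$, and linearity yields $B_iv=\sum_j B_iv_j=0$. This proves the first assertion.

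For the ``in particular'' part, recall that $T$ is semi-simple, so the standard module $V$ admits an orthogonal direct sum decomposition into irreducible $T$-modules. Applying the first part of the proposition to this decomposition yields the second statement. There is essentially no obstacle here: the entire content of the proposition is the observation that $B_i\in T$ together with the $E_i^*$-invariance of $T$-modules, both of which follow at once from the definitions in Section \ref{sec:ter}.
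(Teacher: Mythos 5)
Your proof is correct and follows essentially the same route as the paper: the forward direction is immediate from $E_i^*V_j\subseteq E_i^*V$, and the converse decomposes any $v\in E_i^*V$ as a sum of $v_j\in E_i^*V_j$ and uses linearity, with the ``in particular'' part coming from semisimplicity of $T$. Your only addition is to make explicit the $E_i^*$-invariance of the $V_j$ behind that decomposition, which the paper leaves implicit.
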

\begin{proof}
	If \eqref{uniformeq} holds on $\e_iV$, then it is clear that \eqref{uniformeq} holds on $\e_iV_j$ for every $1\leq j \leq \ell$. To prove the other direction, pick $v\in \e_iV$. Let 
	$$v=v_1+v_2+\cdots+v_\ell,$$
	where $v_j\in \e_iV_j$ for $1\leq j\leq \ell$. As \eqref{uniformeq} holds on $\e_iV_j$ for every $1\leq j \leq \ell$, we have that $e^{-}_iRL^2v+LRLv+ e^+_iL^2Rv =f_iLv$, and so \eqref{uniformeq} holds on $\e_iV$. The second part of the proposition now follows from the fact that $V$ could always be written as an orthogonal direct sum of irreducible $T$-modules.
\end{proof}

Recall that  if $\Gamma_f$ admits a uniform structure with respect to $x$, then an isomorphism class of an irreducible $T_f$-module is determined by the endpoint and diameter of this module. The following result will be useful in the remaining sections of this paper. 

\begin{proposition}\label{tilde}
With the above notation, let $W$ and $W'$ denote thin irreducible $T$-modules with endpoints $r$ and $r'$ and diameters $d$ and $d'$, respectively. Let $\{w_i\}_{i=0}^d$ be a basis for $W$ with $w_i \in \e_{r+i}W$. Similarly, let $\{w'_i\}_{i=0}^{d'}$ be a basis for $W'$ with $w'_i \in \e_{r'+i}W'$. Let $\beta_i \, (1\leq i \leq d)$ and $\gamma_i \, (0\leq i \leq d-1)$ be scalars such that $Lw_i=\beta_i w_{i-1}$ and $Rw_i=\gamma_i w_{i+1}$. Similarly, let $\beta'_i \, (1\leq i \leq d')$ and $\gamma'_i \, (0\leq i \leq d'-1)$ be scalars such that $Lw'_i=\beta'_i w'_{i-1}$ and $Rw'_i=\gamma'_i w'_{i+1}$. Then, $W$ and $W'$ are isomorphic as $T_f$-modules if and only if $r=r'$, $d=d'$, and $\beta_{i+1}/\beta'_{i+1}=\gamma'_i/\gamma_i$ for $0 \leq i \leq d-1$.
\end{proposition}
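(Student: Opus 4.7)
The plan is to prove both directions of the equivalence directly, exploiting that $W$ and $W'$, being thin irreducible $T$-modules, are also thin irreducible $T_f$-modules by the preceding lemma. Under thinness each $E_{r+i}^*W$ is one-dimensional, so any $T_f$-isomorphism between them is pinned down by the scalars it applies to the basis vectors $w_i$.

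For the forward implication, suppose $\sigma:W\to W'$ is a $T_f$-isomorphism. Because $\sigma$ commutes with every $E_j^*$, it sends $E_j^*W$ onto $E_j^*W'$; comparing the supports forces $r=r'$ and $d=d'$, and by one-dimensionality there exist nonzero scalars $c_i$ with $\sigma(w_i)=c_i w_i'$ for $0\le i\le d$. Then $\sigma L=L\sigma$ applied to $w_i$ yields $c_{i-1}\beta_i=c_i\beta'_i$ for $1\le i\le d$, and $\sigma R=R\sigma$ applied to $w_i$ yields $c_{i+1}\gamma_i=c_i\gamma'_i$ for $0\le i\le d-1$. Re-indexing the first as $c_{i+1}/c_i=\beta_{i+1}/\beta'_{i+1}$ and comparing with the second $c_{i+1}/c_i=\gamma'_i/\gamma_i$ gives the stated identity.

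For the backward implication, I would first observe that every $\gamma_i$ $(0\le i\le d-1)$ and every $\beta_i$ $(1\le i\le d)$ is nonzero: if $\gamma_k=0$ for some $0\le k\le d-1$, then $\mathrm{Span}\{w_0,\ldots,w_k\}$ is closed under $L$, $R$, and all $E_j^*$, giving a proper nonzero $T_f$-submodule of $W$ and contradicting its irreducibility as a $T_f$-module; the vanishing of some $\beta_i$ is excluded by the symmetric argument on $\mathrm{Span}\{w_i,\ldots,w_d\}$. Given the ratio hypothesis, set $c_0=1$ and define recursively $c_{i+1}=c_i\gamma'_i/\gamma_i$ for $0\le i\le d-1$; the hypothesis guarantees that this equals $c_i\beta_{i+1}/\beta'_{i+1}$, so the two recursions produced by $R$ and by $L$ are simultaneously satisfied and all $c_i$ are nonzero. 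Extending linearly by $\sigma(w_i):=c_i w_i'$, one checks on basis vectors that $\sigma$ commutes with $L$, $R$, and every $E_j^*$, hence with the generators of $T_f$, so $\sigma$ is a $T_f$-isomorphism.

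The argument is essentially mechanical; the main conceptual point (and the only place where something nontrivial is used) is the appeal to the preceding lemma to view $W$ as a thin irreducible $T_f$-module, together with an irreducibility argument that rules out vanishing of the $\beta_i$ and $\gamma_i$ in the relevant range. This is what makes the recursion defining the $c_i$ well-posed and produces a genuine isomorphism rather than a mere linear map.
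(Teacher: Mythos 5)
Your proposal is correct and follows essentially the same route as the paper's proof: commutation with the dual idempotents pins down $r=r'$, $d=d'$ and scalars $\sigma(w_i)=c_iw_i'$, commutation with $L$ and $R$ yields the two recursions whose compatibility is exactly the stated ratio condition, and conversely those ratios let one define the scalars recursively and verify commutation with the generators $L$, $R$, $E^*_0,\dots,E^*_\varepsilon$ of $T_f$. Your explicit submodule argument for the non-vanishing of the $\beta_i$ and $\gamma_i$ is just a slightly more detailed version of the irreducibility remark the paper makes for the same purpose, so no substantive difference.
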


\begin{proof}
Assume first that $W$ and $W'$ are isomorphic as $T_f$-modules. Let $\sigma: W\rightarrow W'$ be a $T_f$-isomorphism. Since  $\left( \sigma \e_i - \e_i\sigma \right) W=0$, we have that $\sigma(w_i) \in \e_{r+i} W'$ for $0 \le i \le d$. Consequently, $r=r'$, $d=d'$, and that there exists non-zero scalars $\lambda_i$ $(0\leq i\leq d)$ such that $\sigma(w_i)=\lambda_i w'_i$.  As  $\left( \sigma L - L\sigma \right) W=0$, we get that $\beta_{i+1}\lambda_i=\lambda_{i+1}\beta'_{i+1}$ $(0\leq i\leq d-1)$. Similarly,  as  $\left( \sigma R - R\sigma \right) W=0$, we get that $\gamma'_{i}\lambda_i=\lambda_{i+1}\gamma_{i}$ $(0\leq i\leq d-1)$. Note that the scalars $\beta_i$ and $\gamma_i$ are non-zero, otherwise $W$ would contain a proper non-trivial submodule, which contradicts the irreducibility of $W$. The same holds for $\beta'_i$ and $\gamma'_i$. It follows that $\beta_{i+1}/\beta'_{i+1}=\gamma'_i/\gamma_i$ for $0 \leq i \leq d-1$. 

Next assume that $r=r'$, $d=d'$ and $\beta_{i+1}/\beta'_{i+1}=\gamma'_i/\gamma_i$ for $0 \leq i \leq d-1$. Let us define $\lambda_0=1$ and 
$$\lambda_{i+1}=\frac{\beta_{i+1}}{\beta'_{i+1}}\lambda_i=\frac{\gamma'_i}{\gamma_i}\lambda_i \qquad (0\leq i\leq d-1).$$ 
Furthermore, for $0 \le i \le d$ define $\sigma(w_i)=\lambda_iw'_i$, and extend $\sigma$ by linearity to $\sigma: W\rightarrow W'$. It is now easy to see that $\sigma$ is a vector-space isomorphism, and that $\left( \sigma B - B\sigma \right) W=0$ for $B\in \{L, R, \e_0,\ldots,\e_\varepsilon\}$. This yields that $\sigma$ is a $T_f$-isomorphism, which completes the proof.
\end{proof}


\section{Distance-regular graphs and $Q$-polynomial property}
\label{sec:drg}

In this section, we review some definitions and basic concepts regarding distance-regular graphs. Let $\Gamma=(X, \R)$ denote a graph with diameter $D$. Recall that for $x,y \in X$ the \emph{distance} between $x$ and $y$, denoted by $\partial(x,y)$, is the length of a shortest $xy$-path. 
For an integer $i$, we define $\Gamma_i(x)$ by 
$$\Gamma_i(x)=\left\lbrace y \in X \mid \partial(x, y)=i\right\rbrace. 
$$
In particular,  $\Gamma(x)=\Gamma_1(x)$ is the set of neighbors of $x$.

For an integer $k \geq 0$, we say that $\Gamma$ is \emph{regular} with valency $k$ whenever $|\Gamma(x)| = k$ for all $x\in X$. We say  that $\Gamma$ is \emph{distance-regular} whenever, for all integers $0 \leq h, i, j \leq D$ and all $x,y \in X$ with $\partial(x, y) = h$, the number $p^h_{ij}:= |\Gamma_{i}(x) \cap \Gamma_{j} (y)|$ is independent of the choice of $x,y$. The constants $p^h_{ij}$ are known as the \emph{intersection numbers} of $\Gamma$. For convenience, set $c_i := p^i _{1 \, i-1}  \; (1 \leq i \leq D)$, $a_i := p^i_{ 1i} \; (0 \leq i \leq D)$, $b_i := p^i_{ 1 \, i+1} \; (0 \leq i \leq D - 1)$,  $k_i := p^0_{ii} \; (0 \leq i \leq D)$, and $c_0 := 0, b_D := 0$.  Note that in a distance-regular graph, the eccentricity of every vertex is equal to $D$. 
For the rest of this section assume that $\Gamma$ is distance-regular. By the triangle inequality, for $0 \leq h, i, j \leq D$ we have $p^h_{ij}= 0$ (resp., $p^h_{ij}\neq 0$) whenever one of $h, i, j$ is greater than (resp., equal to) the sum of the other two. In particular, $c_i \neq 0 $ for $ 1 \leq i \leq D$ and $b_i \neq 0$ for $0 \leq i \leq D - 1$. Observe that $\Gamma$ is regular with valency $k = b_0 = k_1$ and $c_i +a_i +b_i = k $ for $ 0 \leq i \leq D$. Moreover, $\Ga$ is bipartite if and only if $a_i=0$ for $0 \le i \le D$.

We now recall the near polygons. A distance-regular graph $\Ga$ is called a {\em near polygon} whenever $a_i=a_1 c_i$ for $1 \le i \le D-1$ and $\Ga$ does not contain $K_{1,1,2}$ as an induced subgraph \cite{No}. Here $K_{1,1,2}$ denotes the complete multipartite graph with three parts of order $1$, $1$, and $2$, respectively. 

We now recall what it means for
$\Ga$ to have classical parameters and negative type.
The graph $\Ga$ is said to have {\it classical parameters} $(D,q,\alpha,\beta)$ 
whenever the intersection numbers of $\Ga$ satisfy
$$
  c_i = {i \brack 1} \Big( 1+\alpha {i-1 \brack 1} \Big) \qquad \qquad (1 \le i \le D), 
$$
$$
  b_i = \Big( {D \brack 1} - {i \brack 1} \Big) 
        \Big( \beta-\alpha {i \brack 1} \Big) \qquad \qquad (0 \le i \le D-1), 
$$
where
$$
  {j \brack 1} := 1+q+q^2+ \cdots + q^{j-1}. 
$$
In this case $q$ is an integer and $q \not \in \{0,-1\}$, see \cite[Proposition 6.2.1]{BCN}. We say that $\Ga$ is of {\it negative type} whenever $\Ga$ has classical parameters $(D,q,\alpha,\beta)$ such that $q < -1$. Distance-regular graphs with classical parameters with $q=1$ are classified (see \cite[Theorem 6.1.1]{BCN}).

\medskip \noindent 
Next we recall the distance matrices of $\Ga$. For $0 \le i \le D$ let $A_i$ denote the matrix in  $\MX$ with $(y,z)$-entry
\begin{equation}
\label{dm1}
  (A_i)_{y z} = \left\{ \begin{array}{ll}
                 1 & \hbox{if } \; \partial(y,z)=i, \\
                 0 & \hbox{if } \; \partial(y,z) \ne i \end{array} \right. \qquad (y,z \in X).
\end{equation}
We call $A_i$ the $i$th {\it distance matrix} of $\Ga$. Note that $A_1=A$ is the
adjacency matrix of $\Ga$. We observe
(ai)   $A_0 = I$;
(aii)  $\sum_{i=0}^D A_i=J$;
(aiii)  $A_i^\top = A_i  \;(0 \le i \le D)$;
(aiv)   $A_iA_j = \sum_{h=0}^D p_{ij}^h A_h \;(0 \le i,j \le D)$,
where $I$ (resp. $J$) denotes the identity matrix (resp. all 1's matrix) in  $\MX$. It turns out that $\{A_i\}_{i=0}^D$ is a basis for the adjacency algebra $M$ of $\Ga$. By 
\cite[p.~45]{BCN} $M$ has a second basis $\{E_i\}_{i=0}^D$ such that 
(ei)   $E_0 = |X|^{-1}J$;
(eii)  $\sum_{i=0}^D E_i=I$;
(eiii)  $E_i^\top =E_i  \;(0 \le i \le D)$;
(eiv)   $E_iE_j =\delta_{ij}E_i  \;(0 \le i,j \le D)$.
We call $\{E_i\}_{i=0}^D$  the {\it primitive idempotents} of $\Gamma$.  
The primitive idempotent $E_0$ is said to be {\em trivial}. Since $\{E_i\}_{i=0}^D$ form a basis for $M$,
there exist scalars $\{\theta_i\}_{i=0}^D$ such that $A = \sum_{i=0}^D \theta_i E_i$. Combining this with (eiv) we find
$$
  A E_i = E_i A = \theta_i E_i \qquad \qquad (0 \le i \le D).
$$ 
We call $\theta_i$ the {\em eigenvalue} of $\Ga$ associated with $E_i$. 
It turns out that  $\{\theta_i\}_{i=0}^D$ are real and mutually distinct since $A$ generates $M$. By (ei) we have $\theta_0 = k$. 
By (eii)--(eiv), 
\begin{equation}
\label{de}
  V = E_0V + E_1V + \cdots + E_DV \qquad \hbox{(orthogonal direct sum)}.
\end{equation}
For $0 \le i \le D$ the space $E_iV$ is the eigenspace of $A$ associated with $\theta_i$.
Let $m_i$ denote the rank of $E_i$, and note that $m_i$ is the dimension of $E_iV$.
We call $m_i$ the {\em multiplicity} of $\theta_i$. 

\medskip \noindent
We recall the Krein parameters of $\Ga$.
Let $\circ $ denote the entrywise product in $\MX$. Observe that
$A_i\circ A_j= \delta_{ij}A_i$ for $0 \leq i,j\leq D$, so $M$ is closed under $\circ$. Thus, there 
exist scalars $q^h_{ij}$  $(0 \leq h,i,j\leq D)$ such that
$$
  E_i \circ E_j = |X|^{-1}\sum_{h=0}^D q^h_{ij}E_h \qquad \qquad (0 \leq i,j\leq D).
$$
The parameters $q^h_{ij}$ are called the {\it Krein parameters of} $\Ga$.
By \cite[Proposition 4.1.5]{BCN} these parameters are real and nonnegative.
The given ordering $\{E_i\}_{i=0}^D$ of the primitive idempotents
is said to be $Q$-{\em polynomial} if for $0 \le h,i,j \le D$ 
the Krein parameter $q^h_{ij}=0$ (resp. $q^h_{ij} \ne 0$) whenever one of $h,i,j$ is greater than (resp. equal to) 
the sum of the other two. Let $E$ denote a nontrivial primitive idempotent of $\Ga$ and let $\theta$ denote the 
corresponding eigenvalue. We say that $\Ga$ is {\em Q-polynomial with respect to E} (or $\theta$) whenever there exists a 
$Q$-polynomial ordering $\{E_i\}_{i=0}^D$ of the primitive idempotents of $\Ga$ such that $E_1 = E$. 

The ordering $\{E_i\}_{i=0}^D$ of the primitive idempotents of $\Ga$ (and the corresponding ordering of eigenvalues $\{\theta_i\}_{i=0}^D$ of $\Ga$) is said to be \emph{original} if $\theta_0 > \theta_1 > \cdots > \theta_D$. By \cite[Corollary 8.4.2]{BCN}, all graphs with classical parameters are $Q$-polynomial. Moreover, if $\Ga$ has classical parameters with $q \ge 1$, then the original ordering of the primitive idempotents of $\Ga$ is $Q$-polynomial. 

Assume for a moment that $\{E_i\}_{i=0}^D$ is a $Q$-polynomial ordering of the primitive idempotents of $\Ga$. Pick $x \in X$, and let $T=T(x)$ denote the corresponding Terwilliger algebra. Let $W$ denote a thin irreducible $T$-module with endpoint $r$ and diameter $d$. We observe that $W$ is an orthogonal direct sum of the non-vanishing subspaces $E_iW$ for $0 \leq i \leq D$. The \emph{dual endpoint} of $W$ is defined as $t:=t(W)=\min \{i \mid 0 \le i\le D, \; E_i W \ne 0 \}$, and the \emph{dual diameter} of $W$ as $d^*:=d^*(W)=\left|\{i \mid 0 \le i\le D, \; E_i W \ne 0 \} \right|-1$. From \cite[Lemma 3.1, Lemma 5.1]{CMT} we deduce that $d=d^*$. Pick a nonzero $v \in E_t W$. Then, $\{\e_i v\}_{i=r}^{r+d}$ is a basis for $W$, see \cite[Theorem 8.1(i)]{cerzo}. We call this basis a \emph{standard basis} for $W$. In what follows we will be using the following notation.

\begin{notation}\label{not3.2}
	Let $\Ga=(X,\mathcal{R})$ denote a distance-regular non-bipartite graph with diameter $D \ge 3$, intersection numbers $b_i \, (0 \le i \le D-1)$, $c_i \, (1 \le i \le D)$,  and eigenvalues $\theta_0>\theta_1>\ldots>\theta_D$. Let $E_0, E_1, \ldots, E_D $ denote the corresponding primitive idempotents of $\Gamma$ and $V$ denote the standard module of $\Ga$. Fix $x\in X$. Let $T=T(x)$ be the Terwilliger algebra of $\Ga$ and $\e_i \, (0 \le i \le D)$ be the dual idempotents of $\Ga$ with respect to $x$. Let $L$, $F$, and $R$ denote the corresponding lowering, flat, and raising matrix, respectively. Let ${\Gamma_f}={\Gamma_f(x)}$ be as defined in Definition~\ref{def3.1}. Let $T_f=T_f(x)$ be the Terwilliger algebra of $\Ga_f$. Recall  that $T_f$ is generated by the matrices $L, R$, and $\e_i$ ($0\leq i \leq D$).
\end{notation}


\section{Irreducible $T$-modules of distance-regular graphs} \label{endp1}

With reference to Notation \ref{not3.2}, in this section, we summarize certain results of Go and Terwilliger \cite{GoTer, Ter} about irreducible $T$-modules with endpoint $1$. 

\begin{definition} \label{dual}
With reference to Notation \ref{not3.2}, let $W$ denote a thin irreducible $T$-module with endpoint $r$. Pick a nonzero $w \in \e_r W$. Since $\dim(\e_rW)=1$, we have that $Fw = \eta w$ for some scalar $\eta$. Observe that $\eta$ does not depend on the choice of $w$. We refer  to $\eta$ as the \emph{local eigenvalue of} $W$. 
\end{definition}

With reference to Notation \ref{not3.2}, to further describe local eigenvalues of irreducible $T$-modules with endpoint $1$, we introduce the following notation: for $z\in \mathbb{R}$, we define 
$$ 
\tilde{z}=\begin{cases}
	-1-\frac{b_1}{1+z} &\hbox{ if $z\neq-1$},\\
	\infty &\hbox{ if $z=-1$}.
\end{cases}
$$

It follows from \cite[Lemma 2.6]{JKT} that $\theta_1,\theta_D$ are not equal to $ -1$, that $\tilde{\theta_1}<-1$ and that $\tilde{\theta_D}\ge 0$. Let $W$ denote a thin irreducible $T$-module with endpoint 1 and local eigenvalue $\eta$.  By \cite[Theorem 1]{Ter1}, we have that $\tilde{\theta_1}\le \eta \le \tilde{\theta_D}$.

\begin{proposition}[{\cite[Theorem 10.3]{GoTer}, \cite[Theorem 11.4]{Ter}}]\label{prop:diam}
	With reference to Notation \ref{not3.2}, let $W$ denote a thin irreducible $T$-module with endpoint $1$, diameter $d$, and local eigenvalue $\eta$. Then, the following (i), (ii) hold. 
	\begin{enumerate}[label=(\roman*)]
		\item If $\eta \in \{\tilde{\theta_1}, \tilde{\theta_D}\}$, then $d=D-2$. 
		\item If $\tilde{\theta_1}<\eta< \tilde{\theta_D}$, then $d=D-1$. 
	\end{enumerate}
 \end{proposition}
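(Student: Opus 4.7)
The plan is to analyze the tridiagonal action of $A$ on $W$ and exploit the $Q$-polynomial and dual-endpoint structure to read off $d$ from $\eta$. Since $W$ is thin with endpoint $r=1$, each subspace $\e_{1+i}W$ for $0 \le i \le d$ is one-dimensional; I fix a nonzero $w_0 \in \e_1 W$ with $Fw_0 = \eta w_0$ and choose compatible basis vectors $w_i \in \e_{1+i}W$. The matrix of $A = L+F+R$ in this basis is tridiagonal, so its eigenvalues, a $(d+1)$-element subset of $\{\theta_0, \ldots, \theta_D\}$, are determined by its entries. Because $W$ is orthogonal to the trivial module $T\widehat x$ and $E_0 V \subset T\widehat x$, this subset lies in $\{\theta_1, \ldots, \theta_D\}$, giving $d \le D-1$ at once.

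Next I would use the $Q$-polynomial property. Since $W$ is thin and $\Gamma$ is $Q$-polynomial, the remarks immediately preceding the proposition give that $W$ has dual endpoint $t$ and dual diameter $d^* = d$; equivalently, the spectrum of $A|_W$ is the set $\{\theta_t, \theta_{t+1}, \ldots, \theta_{t+d}\}$ of $d+1$ consecutive primal eigenvalues. The local eigenvalue $\eta$ is determined by the pair $(t,d)$ via the tridiagonal coefficients attached to this spectrum, and the inequalities $\tilde{\theta_1} \le \eta \le \tilde{\theta_D}$ recalled above correspond to the two extreme configurations of the dual support. A short computation with the M\"obius map $\tilde{\cdot}$ shows that $\eta = \tilde{\theta_1}$ occurs exactly when $\theta_1$ is absent from the dual support of $W$, and $\eta = \tilde{\theta_D}$ occurs exactly when $\theta_D$ is absent; for strictly interior values of $\eta$, neither can be absent, which forces the dual support to be all of $\{1, 2, \ldots, D\}$.

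The proposition then follows from a size count on the dual support. If $\tilde{\theta_1} < \eta < \tilde{\theta_D}$, the dual support is $\{1, \ldots, D\}$, so $d = D - 1$, proving (ii). If $\eta = \tilde{\theta_1}$, the dual support is $\{2, \ldots, D\}$, giving $d = D - 2$; symmetrically for $\eta = \tilde{\theta_D}$ the dual support is $\{1, \ldots, D-1\}$, again giving $d = D - 2$ and establishing (i). The main obstacle is making rigorous the M\"obius correspondence between $\eta$ and the extremes of the dual support; this is precisely the content of the first-subconstituent analysis of Go and Terwilliger \cite{GoTer, Ter}, and depends on explicit computations comparing the primitive idempotents $E_i$ with the dual idempotents $\e_j$ through the intersection numbers of $\Gamma$.
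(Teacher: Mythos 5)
There is a genuine gap, and it is worth noting first that the paper does not prove Proposition~\ref{prop:diam} at all: it is quoted directly from \cite[Theorem 10.3]{GoTer} and \cite[Theorem 11.4]{Ter}. Your sketch is therefore not an alternative proof but a reformulation that, by your own admission, defers its decisive step (``the M\"obius correspondence between $\eta$ and the extremes of the dual support'') to exactly those same references. That correspondence is not a ``short computation''; it is the substance of the first-subconstituent analysis being cited, so as written the argument assumes what it is supposed to establish. A secondary mismatch: your route needs the $Q$-polynomial machinery (the fact $d=d^*$ and that the dual support $\{i : E_iW\neq 0\}$ is an interval of consecutive indices), which is not among the hypotheses of the proposition --- Notation~\ref{not3.2} does not assume $Q$-polynomiality, and the paper deliberately adds that hypothesis only in Proposition~\ref{prop:dualendpt}. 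For the graphs treated in this paper that assumption is available, but your argument proves a weaker statement than the one quoted.

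Even granting your claimed equivalences (``$\eta=\tilde{\theta_1}$ iff $\theta_1$ absent'', ``$\eta=\tilde{\theta_D}$ iff $\theta_D$ absent''), case (i) does not follow. If $\eta=\tilde{\theta_1}$, you learn only that the dual support is an interval not containing $1$ but containing $D$, i.e.\ of the form $\{t,\dots,D\}$ with $t\ge 2$; this gives $d\le D-2$, not $d=D-2$, since nothing in your argument excludes a support such as $\{3,\dots,D\}$. To pin down $t\le 2$ you would need either a lower bound on the diameter of a thin endpoint-$1$ module or control of the dual endpoint --- but the dual-endpoint statement is Proposition~\ref{prop:dualendpt}, which the paper derives \emph{from} Proposition~\ref{prop:diam} (together with Caughman's results), so invoking it here would be circular. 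The symmetric case $\eta=\tilde{\theta_D}$ has the same defect. Only case (ii) genuinely closes under your scheme, since an interval containing both $1$ and $D$ must be all of $\{1,\dots,D\}$. In short: the upper bound $d\le D-1$ and case (ii) are fine modulo the unproved correspondence, but the correspondence itself and the exact value $d=D-2$ in case (i) are precisely the content of the cited theorems, and your proposal supplies no independent argument for them.
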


\begin{proposition} \label{prop:dualendpt}
	With reference to Notation \ref{not3.2}, assume that $\Ga$ is $Q$-polynomial with respect to a certain ordering of its primitive idempotents. Let $W$ denote a thin irreducible $T$-module with endpoint $1$, diameter $d$, dual endpoint $t$, and local eigenvalue $\eta$. Then, the following (i), (ii) hold. 
	\begin{enumerate}[label=(\roman*)]
		\item If $\eta \in \{\tilde{\theta_1}, \tilde{\theta_D}\}$, then $t \in \{1,2\}$.
		\item If $\tilde{\theta_1}<\eta< \tilde{\theta_D}$, then $t=1$.
	\end{enumerate}
\end{proposition}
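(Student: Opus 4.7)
The plan is to bound the dual endpoint $t$ from both sides using two ingredients: the equality $d=d^*$ (valid for thin irreducible $T$-modules of a $Q$-polynomial distance-regular graph, as recalled in Section~\ref{sec:drg}), together with the value of the diameter $d$ provided by Proposition~\ref{prop:diam}. Combined with the fact that a thin irreducible $T$-module of endpoint $\ge 1$ satisfies $E_0W=0$, one obtains $1\le t\le D-d$, after which the two claims follow simply by substituting the two possible values of $d$.

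First I would establish the lower bound $t\ge 1$. Since $E_0=|X|^{-1}J$, the subspace $E_0V$ is one-dimensional and spanned by the all-ones vector $\mathbf{1}$. If $E_0W\neq 0$, then necessarily $E_0W=E_0V=\mathrm{Span}(\mathbf{1})$; but $E_0\in M\subseteq T$ forces $E_0W\subseteq W$, so $\mathbf{1}\in W$. Applying $\e_0$ yields $\widehat{x}\in \e_0 W$, contradicting the assumption that the endpoint of $W$ equals $1$. Therefore $E_0W=0$ and $t\ge 1$.

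For the upper bound, I would observe that the set $\{i\mid 0\le i\le D,\ E_iW\neq 0\}$ has cardinality $d^*+1$ by definition, is contained in $\{t,t+1,\ldots,D\}$, and has minimum $t$; hence its maximum element is at least $t+d^*$ and at most $D$, giving $t+d^*\le D$. Invoking $d=d^*$ yields $t\le D-d$. In case (i), Proposition~\ref{prop:diam} gives $d=D-2$, so $1\le t\le 2$ and $t\in\{1,2\}$. In case (ii), $d=D-1$, so $1\le t\le 1$ and $t=1$.

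The argument is almost entirely structural, so I do not anticipate serious difficulty; the only external input is the identity $d=d^*$, which the excerpt already quotes from \cite{CMT, cerzo}. The only point that requires mild care is the upper bound, where one must not tacitly assume that the indices $i$ with $E_iW\neq 0$ are consecutive; the cardinality-based estimate above avoids this subtlety.
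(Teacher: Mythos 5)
Your proof is correct, and it reaches the conclusion by a more self-contained route than the paper. The paper's own proof obtains both bounds on $t$ by citation: it invokes \cite[Lemma 7.1(ii)]{Cau} for $t\ge 1$ and \cite[eq.\ (39)]{Cau} for $t\le 2$ (resp.\ $t=1$), after fixing $d$ via Proposition~\ref{prop:diam}, exactly as you do. You instead prove the lower bound directly from $E_0=|X|^{-1}J$: since $E_0\in M\subseteq T$ and $E_0V=\mathrm{Span}(\mathbf{1})$ is one-dimensional, $E_0W\neq 0$ would force $\mathbf{1}\in W$ and hence $\widehat{x}=E^*_0\mathbf{1}\in E^*_0W$, contradicting endpoint $1$; and you prove the upper bound $t\le D-d$ by the cardinality argument $t+d^*\le D$ (a set of $d^*+1$ distinct indices in $\{t,\dots,D\}$ with minimum $t$) combined with $d=d^*$, which the paper already records from \cite{CMT}. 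The two arguments share the same skeleton (bound $t$ below by $1$ and above in terms of the diameter, then substitute $d=D-2$ or $d=D-1$); what yours buys is independence from Caughman's machinery, at the cost of a slightly longer write-up, while the paper's version is shorter but leans on results imported from the bipartite-scheme setting. Your care in not assuming the indices $i$ with $E_iW\neq 0$ are consecutive is appropriate, since only the count and the minimum are needed.
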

\begin{proof}
	First, assume that $\eta \in \{\tilde{\theta_1},\tilde{\theta_D}\}$, and so $d=D-2$ by Proposition \ref{prop:diam}(i). By \cite[Lemma 7.1(ii)]{Cau}, the dual endpoint $t\geq1$. Furthermore, from \cite[eq. (39)]{Cau}, we derive that $t\le2$.
	
	Now, suppose that $\tilde{\theta_1} <\eta<\tilde{\theta_D}$, and so $d=D-1$ by Proposition \ref{prop:diam}(ii). Similarly, as above, we derive from  \cite[Lemma 7.1(ii), eq. (39)]{Cau} that $t=1$ in this case.
\end{proof}

We now recall tight distance-regular graphs. With reference to Notation \ref{not3.2}, by \cite[Theorem 6.2]{JKT}, we have that 
\begin{equation}\label{fb}
	\left(\theta_1+\frac{b_0}{a_1+1}\right) \left(\theta_D+\frac{b_0}{a_1+1}\right) \geq -\frac{b_0 a_1 b_1}{(a_1+1)^2}.
\end{equation}
 We say that $\Gamma$ is \emph{tight} whenever the equality holds in \eqref{fb}.

\begin{theorem}[{\cite[Theorem 13.6, Theorem 13.7]{GoTer}}]\label{tight}
	With reference to Notation \ref{not3.2}, the following (i)--(iii) are equivalent:
	\begin{enumerate}[label=(\roman*)]
		\item $\Gamma$ is tight,
		\item every irreducible $T$-module with endpoint $1$ is thin with local eigenvalue $\tilde{\theta_1}$ or $\tilde{\theta_D}$,
		\item $a_D=0$ and every irreducible $T$-module with endpoint $1$ is thin.
	\end{enumerate}
\end{theorem}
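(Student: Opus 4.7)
The plan is to prove the three conditions equivalent by a cyclic chain of implications, with the bridge being a quadratic trace identity on the flat matrix $F$ restricted to the first subconstituent $E_1^*V$. The underlying idea is that tightness of $\Gamma$ is encoded algebraically in the spectrum of $F|_{E_1^*V}$, while the spectrum of $F|_{E_1^*V}$ is governed by the local eigenvalues of the endpoint-$1$ irreducible $T$-modules.

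For the implication (i)$\Rightarrow$(ii), I would decompose $E_1^*V$ as the orthogonal direct sum of the one-dimensional summand $E_1^*T\widehat{x}$ (on which $F$ acts as $a_1$) and the subspaces $E_1^*W$ for irreducible $T$-modules $W$ of endpoint~$1$. On each $E_1^*W$ the matrix $F$ acts with eigenvalues that, in the thin case, equal the local eigenvalue $\eta=\eta(W)$ of Definition~\ref{dual}. One then computes $\mathrm{tr}(F|_{E_1^*V})$ and $\mathrm{tr}(F^2|_{E_1^*V})$ in two ways: first by reducing to counts of closed walks using $F=\sum_i E_i^*AE_i^*$ and the intersection numbers $a_1, b_1, p_{11}^2$, which gives expressions in $a_1,b_1,\theta_1,\theta_D,k$; and second in terms of the dimensions of the isotypic components together with the local eigenvalues. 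Combining them, one obtains
\[
\sum_{W} \dim(E_1^*W)\,(\eta(W)-\tilde{\theta_1})(\tilde{\theta_D}-\eta(W)) \ \ge\ 0,
\]
which, after rearrangement using $(1+z)(1+\tilde z)=-b_1$, is exactly \eqref{fb}; here we also invoke $\tilde{\theta_1}\le\eta(W)\le\tilde{\theta_D}$ from \cite{Ter1} as recalled after Proposition~\ref{prop:diam}. Tightness is precisely equality in the above, which forces each $\eta(W)\in\{\tilde{\theta_1},\tilde{\theta_D}\}$. Moreover, a non-thin $W$ would contribute a strictly positive extra term to the identity (since $F$ acting on $E_1^*W$ would have an eigenvalue distinct from $\eta(W)$ that, by the same bound, still lies in $[\tilde{\theta_1},\tilde{\theta_D}]$ and creates an unavoidable gap), so thinness is forced as well.

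The implication (ii)$\Rightarrow$(iii) follows by a parallel trace computation at the opposite end of the module: taking a thin endpoint-$1$ module $W$ with $\eta\in\{\tilde{\theta_1},\tilde{\theta_D}\}$, Proposition~\ref{prop:diam}(i) gives $d(W)=D-2$, so $E_D^*W=0$. Writing the trace of $F$ on $E_D^*V$ as $k_D a_D$ on one side, and on the other side as the sum of the $\eta$-contributions of modules that reach the top subconstituent, one concludes that no endpoint-$1$ module contributes there; the trivial module forces $a_D = 0$. For (iii)$\Rightarrow$(i), one reverses the argument: thinness of all endpoint-$1$ modules turns the trace identities into exact equalities, and the assumption $a_D=0$ kills a boundary term in the recursion governing the action of $F$ on $\e_{D-1}W$, which in turn forces each $\eta(W)$ to be extremal, yielding equality in \eqref{fb}.

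The main obstacle is the trace identity for (i)$\Leftrightarrow$(ii): one must compute $\mathrm{tr}(F|_{E_1^*V})$ and $\mathrm{tr}(F^2|_{E_1^*V})$ in closed form, subtract off the contribution of the trivial module carefully (it is here that $a_1$, and the assumption $a_1\neq 0$ implicit in non-bipartiteness, enters), and then massage the algebraic identity so that the quadratic form on the right-hand side aligns exactly with \eqref{fb} under the substitution $\tilde z=-1-b_1/(1+z)$. The bookkeeping is delicate because the same trace identity must simultaneously yield both tightness (equality) and the extremal values of $\eta(W)$; all the arithmetic of the proof concentrates in this rearrangement.
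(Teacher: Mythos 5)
First, note that the paper does not prove this theorem at all: it is quoted from Go and Terwilliger (the citation [GoTer, Theorems 13.6, 13.7] in the statement), so the comparison here is with that reference rather than with an in-paper argument. Your trace computation on $E_1^*V$ is a legitimate way to relate tightness to local eigenvalues: since $E_1^*V$ is spanned by $E_1^*A\widehat{x}$ together with the spaces $E_1^*W$ over the endpoint-one modules $W$, the identities $\mathrm{tr}(F|_{E_1^*V})=0$ and $\mathrm{tr}(F^2|_{E_1^*V})=ka_1$, combined with the containment of all nontrivial local eigenvalues in $[\tilde{\theta_1},\tilde{\theta_D}]$, do rearrange to \eqref{fb}, with equality precisely when every nontrivial local eigenvalue is $\tilde{\theta_1}$ or $\tilde{\theta_D}$. (Even here you need the containment for \emph{all} nontrivial local eigenvalues, not only those of thin modules; the paper quotes [Ter1] only in the thin case, so this step needs a separate citation or argument.)

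The genuine gap is that this argument cannot produce the thinness assertions in (ii) and (iii), which are the module-theoretic heart of the theorem. Your claim that a non-thin endpoint-one module would contribute ``a strictly positive extra term'' because $F$ would have an additional eigenvalue on $E_1^*W$ is false: non-thinness of an endpoint-one module is invisible on the first subconstituent, since such a module can have $\dim E_1^*W=1$ with the extra dimensions occurring only in $E_i^*W$ for $i\ge 2$ --- the paper's own Proposition \ref{non-thin} exhibits exactly such a module, whose basis \eqref{basis:a1ne0} has one vector in $E_1^*W$ and two in each $E_i^*W$ for $2\le i\le D-1$, so $F|_{E_1^*W}$ is a single scalar there. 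Hence (i)$\Rightarrow$(ii) is not established; in Go--Terwilliger this is where their detailed structure theory of endpoint-one modules enters (e.g.\ the fact that an extremal local eigenvalue forces thinness and diameter $D-2$, cf.\ Proposition \ref{prop:diam}). Likewise, your (ii)$\Rightarrow$(iii) trace argument on $E_D^*V$ breaks down because $E_D^*V$ also meets irreducible modules of every endpoint $r\ge 2$, about which hypothesis (ii) gives no information, so the trace of $F$ on $E_D^*V$ cannot be reduced to the trivial module's contribution and $a_D=0$ does not follow; and (iii)$\Rightarrow$(i) is only gestured at. As it stands, the proposal establishes neither thinness nor $a_D=0$, and therefore does not prove the theorem.
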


\begin{theorem}\label{th2.3}
	With reference to Notation \ref{not3.2}, assume that $\Gamma$ is tight. Then $\Gamma$ has, up to isomorphism, exactly two irreducible $T$-modules with endpoint $1$, which are both thin. One of these modules has local eigenvalue  $\tilde{\theta_1}$, while the other one has  local eigenvalue $\tilde{\theta_D}$. 
\end{theorem}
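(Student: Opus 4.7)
The plan is to combine Theorem~\ref{tight} with a trace computation on the local graph at $x$, together with the Go--Terwilliger theory of thin irreducible $T$-modules with endpoint~$1$. The three steps I have in mind are: (a) reduce to two candidate local eigenvalues, (b) show both actually occur, and (c) show each local eigenvalue determines the isomorphism class.

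\medskip

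\noindent\textbf{Step (a): Reduction.}
By Theorem~\ref{tight}(ii), every irreducible $T$-module with endpoint~$1$ is thin, with local eigenvalue either $\tilde{\theta_1}$ or $\tilde{\theta_D}$. This already yields the thinness claim and shows there are at most two isomorphism classes of such modules, provided uniqueness within each local eigenvalue is established.

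\medskip

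\noindent\textbf{Step (b): Existence of both local eigenvalues.}
Let $\Delta:=\Gamma_1(x)$ denote the local graph at $x$, and let $B$ denote its adjacency matrix. Identifying $\e_1 V$ with $\CC^{\Gamma_1(x)}$, the operator $F|_{\e_1 V}$ is precisely $B$. Since $\Delta$ is $a_1$-regular on $k=b_0$ vertices, the all-ones vector spans $\e_1(T\widehat{x})$ and is an eigenvector of $B$ with eigenvalue $a_1$. On the orthogonal complement of dimension $k-1$, the remaining eigenvalues of $B$ coincide, with multiplicity, with the local eigenvalues of the endpoint-$1$ irreducible submodules of $V$, all of which belong to $\{\tilde{\theta_1},\tilde{\theta_D}\}$ by Step (a). Writing $m_-$ and $m_+$ for the multiplicities of $\tilde{\theta_1}$ and $\tilde{\theta_D}$, one has $m_-+m_+=k-1$ and, from $\mathrm{tr}(B)=0$,
\begin{equation*}
a_1 + m_-\,\tilde{\theta_1} + m_+\,\tilde{\theta_D} = 0.
\end{equation*}
Because $\Gamma$ is tight and non-bipartite, the defining equality in \eqref{fb} forces $a_1\ge 1$; also $D\ge 3$ gives $b_1\ge 1$ and thus $a_1<k-1$. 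If $m_-=0$, then $\tilde{\theta_D}=-a_1/(k-1)<0$, contradicting $\tilde{\theta_D}\ge 0$; if $m_+=0$, then $\tilde{\theta_1}=-a_1/(k-1)>-1$, contradicting $\tilde{\theta_1}<-1$. Hence $m_-,m_+\ge 1$, so both types of endpoint-$1$ modules occur.

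\medskip

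\noindent\textbf{Step (c): Uniqueness within a local eigenvalue.}
Let $W,W'$ be thin irreducible $T$-modules with endpoint~$1$ and common local eigenvalue $\eta\in\{\tilde{\theta_1},\tilde{\theta_D}\}$. By Proposition~\ref{prop:diam}(i), both have diameter $D-2$. On standard bases $\{w_i\}_{i=0}^{D-2}$ and $\{w'_i\}_{i=0}^{D-2}$, write $Lw_i=\beta_i w_{i-1}$, $Fw_i=\eta_i w_i$, $Rw_i=\gamma_i w_{i+1}$, and analogously for $W'$. The Go--Terwilliger theory \cite[Theorem~10.3]{GoTer} expresses all three scalar sequences $\beta_i,\eta_i,\gamma_i$ as explicit rational functions of the intersection numbers of $\Gamma$ and of the single parameter $\eta=\eta_0$. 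Consequently, $\beta_i=\beta'_i$, $\gamma_i=\gamma'_i$, and $\eta_i=\eta'_i$ for all $i$. Proposition~\ref{tilde} then produces a $T_f$-isomorphism $\sigma:W\to W'$, and the coincidence of the $\eta_i$'s forces $(\sigma F-F\sigma)W=0$, so $\sigma$ is in fact a $T$-isomorphism. Combining (a)--(c) yields exactly two isomorphism classes, one for each of $\tilde{\theta_1}$ and $\tilde{\theta_D}$.

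\medskip

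\noindent\textbf{Main obstacle.}
The cleanest part is the trace computation in Step (b); the delicate point is Step (c), where one must carefully invoke the Go--Terwilliger recursions to see that the $L$-, $F$-, and $R$-actions on a standard basis are genuinely pinned down by the single datum of the local eigenvalue. Everything else (thinness, diameter $D-2$, the inequalities on $\tilde{\theta_1},\tilde{\theta_D},a_1$) is already packaged into results stated earlier in the excerpt.
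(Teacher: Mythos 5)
Your argument is correct, and it reaches the conclusion by a genuinely different route in the one place where the paper does real work. For thinness and the restriction to the two candidate local eigenvalues $\tilde{\theta_1},\tilde{\theta_D}$, you and the paper both just quote Theorem~\ref{tight}(ii). For the claim that there are \emph{at most} two isomorphism classes, the paper cites \cite[Theorem 11.1]{GoTer} (isomorphic iff equal local eigenvalue), while you re-derive this from the explicit Go--Terwilliger formulas for $\beta_i,\gamma_i,\eta_i$ plus Proposition~\ref{tilde}; this is the same machinery unpacked one level, and your observation that matching the $\eta_i$'s upgrades the $T_f$-isomorphism to a $T$-isomorphism is the right way to close that loop. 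The real divergence is in showing that \emph{both} local eigenvalues actually occur: the paper argues by contradiction, invoking Curtin--Nomura \cite[Theorem 1.3]{CN} to conclude that a unique endpoint-$1$ module would force $\Gamma$ to be almost bipartite, contradicting $a_D=0$ from Theorem~\ref{tight}(iii). You instead run a trace computation on the local graph: $\mathrm{tr}(F|_{\e_1V})=0$ gives $a_1+m_-\tilde{\theta_1}+m_+\tilde{\theta_D}=0$ with $m_-+m_+=k-1$, and the bounds $\tilde{\theta_1}<-1$, $\tilde{\theta_D}\ge 0$, $1\le a_1\le k-2$ rule out $m_-=0$ and $m_+=0$. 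This is more self-contained (it avoids the external classification result on $1$-thin graphs) at the cost of a short computation; the paper's route is shorter by citation. Your auxiliary claims all check out: $a_1\ge 1$ does follow from tightness plus non-bipartiteness (if $a_1=0$ the right side of \eqref{fb} vanishes, forcing $\theta_D=-b_0$ and hence bipartiteness), and the identification of the nontrivial spectrum of the local graph with the multiset of local eigenvalues of the endpoint-$1$ modules is valid because every such module is thin and every module of endpoint $\ge 2$ vanishes on $\e_1V$.
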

\begin{proof}
	Let $W$ and $W'$ denote irreducible $T$-modules with endpoint 1. Recall that by Theorem \ref{tight} the modules $W$ and $W'$ are both thin with local eigenvalues $\tilde{\theta_1}$ or $\tilde{\theta_D}$. By \cite[Theorem 11.1]{GoTer},  the modules $W$ and $W'$ are isomorphic if and only if they have the same local eigenvalue. It follows that $\Ga$ has, up to isomorphism, at most two irreducible modules with endpoint $1$. Assume for a moment that $\Gamma$ has, up to isomorphism, a unique irreducible $T$-module with endpoint 1. Since $\Gamma$ is not bipartite, it follows from \cite[Theorem 1.3]{CN} that $\Gamma$ is almost bipartite ($\Gamma$ is \emph{almost bipartite} if $a_i=0$ for $0\leq i\leq D-1$, and $a_D\neq0$), contradicting Theorem \ref{tight}. This completes the proof.
\end{proof}


\section{Distance-regular graphs of negative type with $a_1\neq0$} \label{sec3}

We are now ready to start our investigation about which graphs with classical parameters with $q \le 1$ support a uniform structure. With reference to Notation \ref{not3.2},  we first consider distance-regular graphs of negative type. We split our analysis into three cases: $\Gamma$ has intersection number $a_1\neq0$ and is not a near polygon, $\Gamma$ has intersection number $a_1=0$, and $\Gamma$ is a near polygon. In the next three sections, we study each of these cases separately. In this section,  we therefore assume that  $\Gamma$ is a distance-regular graph of negative type with $a_1\neq0$, which is not a near polygon. We prove that $\Gamma$ does not support a uniform structure with respect to $x$. To show this, we use the fact that in this case $\Gamma$ has, up to isomorphism,  a unique non-thin irreducible $T$-module with endpoint 1 \cite[Proposition 14.2]{miklavivc2009terwilliger}, which  we now describe.

\begin{proposition}[{\cite[Theorem 12.2, Theorem 14.4]{miklavivc2009terwilliger}}]\label{non-thin}
	With reference to Notation \ref{not3.2}, assume that $\Gamma$ is of negative type with $a_1\neq0$ and it is  not a near polygon. Then,  the following (i), (ii) hold.
	\begin{enumerate}[label=(\roman*)]
		\item There exists, up to isomorphism, a unique irreducible non-thin $T$-module with endpoint $1$.
		\item Let $W$ denote a non-thin irreducible $T$-module with endpoint $1$. Pick a non-zero $w\in E^*_1W$. Then, the following vectors form a basis for $W$:
		\begin{align}\label{basis:a1ne0}
			E^*_iA_{i-1}w    \quad (1\leq i \leq D), \hspace*{0.5cm}	E^*_iA_{i+1}w    \quad (2\leq i \leq D-1).
		\end{align}
	\end{enumerate}
\end{proposition}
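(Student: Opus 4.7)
The plan is to invoke the structure theory of irreducible $T$-modules with endpoint $1$ of $Q$-polynomial distance-regular graphs developed by Go and Terwilliger, and to specialize it to the current hypothesis. For part (i), I would analyze the local action $F|_{E^*_1 V}$, whose eigenvalues govern the isomorphism classes of irreducible $T$-modules with endpoint $1$. By Theorem \ref{tight} and the results surrounding it, a thin irreducible $T$-module with endpoint $1$ has local eigenvalue in $\{\tilde{\theta_1}, \tilde{\theta_D}\}$, so a non-thin module must arise from some other eigenvalue of the local graph. Under the hypothesis $q \le -2$, $a_1 \ne 0$, and $\Gamma$ not a near polygon, a direct computation with the classical-parameter formulas for $b_i$ and $c_i$ together with the three-term recurrence coming from the $Q$-polynomial property pins down the only admissible further local eigenvalue as $\eta = -1$; an argument in the spirit of \cite[Theorem~11.1]{GoTer} then shows that all non-thin modules with this local eigenvalue lie in a single isomorphism class.

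For part (ii), let $W$ denote such a non-thin module and pick a non-zero $w \in E^*_1 W$. Since $W$ is irreducible we have $W = Tw$, so every vector of $W$ is a linear combination of expressions of the form $E^*_{i_k} A \cdots A E^*_{i_0} w$; inserting $\sum_j E^*_j = I$ between adjacent factors of $A$ reduces these to linear combinations of $E^*_i A_j w$ with $0 \le i, j \le D$. Because $w \in E^*_1 V$, the triangle inequality forces $E^*_i A_j w = 0$ unless $j \in \{i-1, i, i+1\}$, so $W$ is spanned by the three families $E^*_i A_{i-1} w$, $E^*_i A_i w$, and $E^*_i A_{i+1} w$ as $i$ ranges over the permitted values. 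Applying $E^*_i$ to the distance-matrix recurrence $A A_{i-1} = b_{i-2} A_{i-2} + a_{i-1} A_{i-1} + c_i A_i$ (with $A_{-1} := 0$) and using $c_i \ne 0$ shows that each $E^*_i A_i w$ lies in the span of $E^*_i A_{i-1} w$ and $E^*_i A_{i+1} w$, so the middle family is redundant. After removing it and accounting for the boundary vanishings (which kill the $i = 1$ and $i = D$ entries of the $E^*_i A_{i+1} w$-family), one is left with exactly the $D + (D - 2) = 2D - 2$ vectors displayed in \eqref{basis:a1ne0}.

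Finally, I would verify linear independence layer by layer. Inside $E^*_i W$ for $2 \le i \le D-1$, the two candidates $E^*_i A_{i-1} w$ and $E^*_i A_{i+1} w$ cannot be proportional, since a proportionality would force the local eigenvalue $\eta$ of $w$ to take one of the extremal values $\tilde{\theta_1}, \tilde{\theta_D}$ (via the same three-term recurrence used above), contradicting the hypothesis that $W$ is non-thin. The total count $2D - 2$ agrees with the expected dimension $1 + 2(D - 2) + 1$ of the unique non-thin module class identified in part (i), so the listed vectors form a basis. The main obstacle in carrying out this program is step (i): ruling out additional local eigenvalues producing non-thin modules and showing that those with $\eta = -1$ fuse into a single isomorphism class. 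This requires a delicate analysis of the Askey--Wilson-type relations satisfied by $Q$-polynomial distance-regular graphs, specialized to classical parameters with $q \le -2$, together with essential use of the assumptions that $\Gamma$ is neither bipartite (ensured by $a_1 \ne 0$) nor a near polygon.
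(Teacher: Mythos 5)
First, note that the paper does not prove Proposition~\ref{non-thin} at all: it is quoted verbatim from \cite[Theorems 12.2, 14.4]{miklavivc2009terwilliger}, so the only meaningful comparison is with that source, whose content is precisely the technical work your sketch leaves out. The central gap is in your part (ii). Inserting $I=\sum_j E^*_j$ between factors of $A$ turns a word in $T$ applied to $w$ into vectors of the form $E^*_{i_k}AE^*_{i_{k-1}}\cdots E^*_{i_1}AE^*_{i_0}w$, and these are \emph{not}, in general, linear combinations of the vectors $E^*_iA_jw$; the products $E^*_iAE^*_jAE^*_k$ do not lie in the span of the matrices $E^*_iA_j$. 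The actual content of the cited theorems is that the span of the $2D-2$ vectors in \eqref{basis:a1ne0} is invariant under $L$, $F$, $R$, which is established by explicitly computing these actions (the formulas quoted later in this paper as Lemmas~\ref{nimeact:lem1} and \ref{nimeact:lem3}, together with the analogous $F$-formulas in \cite{miklavivc2009terwilliger}); those computations use negative type, $a_1\neq 0$, and the failure of the near-polygon condition in an essential way. Your elimination of the middle family suffers from the same circularity: the identity $AA_{i-1}=b_{i-2}A_{i-2}+a_{i-1}A_{i-1}+c_iA_i$ gives $c_iE^*_iA_iw=E^*_iAA_{i-1}w-a_{i-1}E^*_iA_{i-1}w$, and $E^*_iAA_{i-1}w$ (i.e.\ $R E^*_{i-1}A_{i-1}w+FE^*_iA_{i-1}w+LE^*_{i+1}A_{i-1}w$) is not known to lie in the candidate span until the closure formulas are proved. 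Likewise, the linear independence of the two vectors in each middle subconstituent is asserted via an unproven "proportionality forces $\eta\in\{\tilde{\theta_1},\tilde{\theta_D}\}$" claim, and the dimension count you invoke is exactly what has to be established, so it cannot be used as confirmation.

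Part (i) has a second genuine flaw: you argue that a thin irreducible $T$-module with endpoint $1$ has local eigenvalue in $\{\tilde{\theta_1},\tilde{\theta_D}\}$ "by Theorem~\ref{tight} and the results surrounding it," but that statement is an equivalence characterizing \emph{tight} graphs; here $\Gamma$ is not assumed tight, and thin endpoint-$1$ modules may have any local eigenvalue $\eta$ with $\tilde{\theta_1}\le\eta\le\tilde{\theta_D}$. So the identification of $-1$ as the unique local eigenvalue producing non-thin modules, and the statement that all such modules form one isomorphism class, are asserted rather than derived; \cite[Theorem 11.1]{GoTer} concerns thin modules only, and the uniqueness for non-thin modules follows only after one knows that the actions of $L$, $F$, $R$ on the basis \eqref{basis:a1ne0} are given by formulas depending solely on the intersection numbers. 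You candidly describe this as "the main obstacle," which is accurate: both the heart of (i) and the spanning/independence content of (ii) remain unproved in the proposal, and they are exactly what the cited results of \cite{miklavivc2009terwilliger} supply.
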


Next, we give the action of the lowering and raising matrices on the basis vectors from \eqref{basis:a1ne0}.

\begin{lemma}[{\cite[Lemma 13.1]{miklavivc2009terwilliger}}]
	\label{nimeact:lem1}
	With reference to Notation \ref{not3.2}, assume that $\Gamma$ is of negative type with $a_1\neq0$ and it is  not a near polygon. Let $W$ denote a non-thin irreducible $T$-module with endpoint $1$. Then, the following (i)--(v) hold for a nonzero $w \in \e_1 W$:
	
		\begin{enumerate}[label=(\roman*)]
		\item $L w = 0$,
		\item $L \e_2 A w = (b_0 - c_2(a_1+1)) w$,
		\item $L \e_i A_{i-1} w = b_{i-1} \e_{i-1} A_{i-2} w + (c_i - c_{i-1}) \e_{i-1} A_i w$ \: \mbox{for} \,\,$3 \le i \le D$,
		\item $L \e_2 A_3 w = - b_2 (a_1+1) w$,
		\item $L \e_i A_{i+1} w = b_i \e_{i-1} A_i w$\: \mbox{for} \,\,$3 \le i \le D-1.$
	\end{enumerate}
\end{lemma}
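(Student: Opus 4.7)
The plan is to establish each part by direct algebraic manipulation, using the three-term recurrence for the distance matrices $A_j$ of $\Gamma$, the decomposition $A = L + F + R$, the identity $L E^*_i = E^*_{i-1} A E^*_i$, and the projections $A_j w = \sum_k E^*_k A_j w$, which by the triangle inequality are nonzero only for $k \in \{j-1, j, j+1\}$ since $w \in E^*_1 V$.

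Claim (i) is immediate: the endpoint $r(W) = 1$ forces $E^*_0 W = 0$, hence $Lw \in E^*_0 W = 0$.

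For the remaining claims, the common approach is to split $A_{j} w$ (with $j = i-1$ in (ii)--(iii), and $j = i+1$ in (iv)--(v)) across the dual idempotents. For example, to prove (iii) for $3 \le i \le D$, I would write
\begin{align*}
A A_{i-1} w \;=\; A E^*_{i-2} A_{i-1} w \,+\, A E^*_{i-1} A_{i-1} w \,+\, A E^*_i A_{i-1} w,
\end{align*}
project onto $E^*_{i-1} V$, and identify the three resulting pieces as $R(E^*_{i-2} A_{i-1} w)$, $F(E^*_{i-1} A_{i-1} w)$, and $L(E^*_i A_{i-1} w)$. The left-hand side is evaluated using the Bose--Mesner identity $A A_{i-1} = b_{i-2} A_{i-2} + a_{i-1} A_{i-1} + c_i A_i$; projecting onto $E^*_{i-1} V$ gives a linear combination of $E^*_{i-1} A_{i-2} w$ and $E^*_{i-1} A_i w$ (both basis vectors) plus a multiple of $E^*_{i-1} A_{i-1} w$, which is \emph{not} a basis vector and must be re-expressed by the same procedure applied to $A A_{i-2} w$. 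The $R$-action on $E^*_{i-2} A_{i-1} w$ and the $F$-action on $E^*_{i-1} A_{i-1} w$ are handled by analogous manipulations in neighboring subconstituents. Solving the resulting linear relations for $L(E^*_i A_{i-1} w)$ delivers (iii); (ii) is the boundary case $i = 2$ where the first summand vanishes ($E^*_0 A_1 w$ is a scalar multiple of $\widehat{x}$, which is eliminated because the component of $w$ on $\widehat{x}$ is zero), while (iv)--(v) follow from the parallel analysis starting from $A A_{i+1} = b_i A_i + a_{i+1} A_{i+1} + c_{i+2} A_{i+2}$.

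The main obstacle I expect is handling the vectors $E^*_i A_i w$ that surface as intermediate terms: these lie in $W$ but are not basis vectors, so they must be re-expressed in the basis $\{E^*_j A_{j\pm 1} w\}$ by combining several projections of the recurrence, and the hypothesis that $\Gamma$ is not a near polygon is what keeps the resulting coefficient system nondegenerate so that the expressions collapse into the clean closed forms stated. Beyond this, the work is bookkeeping: separating the generic range $3 \le i \le D-1$ from the boundary cases $i \in \{2, D-1, D\}$, where the basis vectors $E^*_i A_{i+1} w$ do not exist and various terms vanish, which accounts for the slightly different shape of formulas (ii) and (iv) compared with (iii) and (v).
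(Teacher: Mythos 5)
The paper does not prove this lemma at all: it is quoted verbatim from \cite[Lemma 13.1]{miklavivc2009terwilliger}, so there is no in-paper argument to compare against. Judged on its own merits, your plan captures the right general flavour (project the Bose--Mesner recurrence $AA_{i-1}=b_{i-2}A_{i-2}+a_{i-1}A_{i-1}+c_iA_i$ onto the subconstituents and read off $L$, $F$, $R$ actions), but as written it does not close. Each projection of the recurrence yields one vector equation involving \emph{several} unknown operator actions at once (e.g.\ $E^*_{i-1}AA_{i-1}w = R\,E^*_{i-2}A_{i-1}w + F\,E^*_{i-1}A_{i-1}w + L\,E^*_iA_{i-1}w$), so the resulting linear system is underdetermined; "analogous manipulations in neighboring subconstituents" just produce more equations of the same underdetermined type. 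Two essential inputs are missing. First, since $W$ has endpoint $1$ it is orthogonal to the trivial module $T\widehat{x}$, whence $E_0W=0$, $Jw=0$, and therefore $E^*_i(A_{i-1}+A_i+A_{i+1})w=0$; this, not further recurrence bookkeeping, is what expresses the stray vectors $E^*_iA_iw$ in the basis, and it is also the correct reason why $E^*_0Aw=0$ in part (ii) (your stated reason, that $w$ has zero $\widehat{x}$-component, is automatic for any $w\in E^*_1V$ and does not give $\sum_y w_y=0$). Second, and more seriously, evaluating products such as $E^*_{i-1}AE^*_iA_{i-1}E^*_1$ requires knowing triple intersection numbers of the form $|\Gamma(u)\cap\Gamma_i(x)\cap\Gamma_{i-1}(y)|$ for the various possible values of $\partial(u,y)$; these are not determined by the intersection array of a general distance-regular graph. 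The cited proof rests on the combinatorial machinery developed earlier in \cite{miklavivc2009terwilliger} for graphs of negative type (absence of kites and parallelograms, which forces these triple counts to be well defined and computable). Your sketch nowhere invokes the negative-type hypothesis for this purpose.

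Relatedly, you misattribute the role of the hypothesis that $\Gamma$ is not a near polygon: it is not a nondegeneracy condition on your coefficient system, but rather (together with $a_1\neq 0$) the condition guaranteeing that a non-thin irreducible $T$-module with endpoint $1$ exists and that the vectors $E^*_iA_{i\pm1}w$ of \eqref{basis:a1ne0} are linearly independent, i.e.\ that the statement has content and that coefficients with respect to these vectors are well defined. Part (i) of your argument is correct as stated. To repair the rest you would need to either import the intersection-number lemmas of \cite{miklavivc2009terwilliger} explicitly or redevelop them, which is the actual substance of the cited proof.
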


\begin{lemma}[{\cite[Lemma 13.3]{miklavivc2009terwilliger}}]
	\label{nimeact:lem3}
	With reference to Notation \ref{not3.2}, assume that $\Gamma$ is of negative type with parameters $(D,q,\alpha, \beta)$. Assume that $a_1\neq0$ and $\Ga$ is not a near polygon. Let $W$ denote a non-thin irreducible $T$-module with endpoint $1$. Then, the following (i)--(iv) hold for a nonzero $w \in \e_1 W$:
		\begin{enumerate}[label=(\roman*)]
		\item $R \e_i A_{i-1} w = c_i \e_{i+1} A_i w$\: \mbox{for} \,\,$1 \le i \le D-1,$
		\item $R \e_D A_{D-1} w = 0,$
		\item for $2 \le i \le D-2$,
	               \begin{eqnarray*}
						R \e_i A_{i+1} w &=& (a_1+1)(c_{i+1} (q^{i-1}-1) (q^{i+1}-1)^{-1} - c_i) \e_{i+1} A_i w +
                                                                          \\&&c_{i+1} (q^{i-1}-1) (q^{i+1}-1)^{-1} \e_{i+1} A_{i+2} w,
					\end{eqnarray*} 
		\item $R \e_{D-1} A_D w = (a_1 + 1) (c_D (q^{D-2}-1) (q^D-1)^{-1} - c_{D-1}) \e_D A_{D-1} w.$
	\end{enumerate}
\end{lemma}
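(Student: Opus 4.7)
\noindent The plan is to carry out a direct algebraic computation that rests on three ingredients: the definition $R\, E^*_i = E^*_{i+1}\, A\, E^*_i$; the intersection-number identity $A\, A_j = b_{j-1}\, A_{j-1} + a_j\, A_j + c_{j+1}\, A_{j+1}$ (with $A_{-1} = A_{D+1} = 0$); and the support observation that $A_j w \in E^*_{j-1} V + E^*_j V + E^*_{j+1} V$ whenever $w \in E^*_1 V$, which is immediate from the triangle inequality.

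For (i) and (ii), I would first decompose $A_{i-1} w = E^*_{i-2}\, A_{i-1} w + E^*_{i-1}\, A_{i-1} w + E^*_i\, A_{i-1} w$, solve for the last summand, and apply $E^*_{i+1}\, A$ to both sides. The contributions from $E^*_{i-2}\, A_{i-1} w$ and $E^*_{i-1}\, A_{i-1} w$ vanish because $E^*_{i+1}\, A\, E^*_j = 0$ whenever $|i+1-j| > 1$, so $R\, E^*_i\, A_{i-1} w = E^*_{i+1}\, A\, A_{i-1} w$. Expanding via the intersection-number identity and observing that $E^*_{i+1}\, A_j w = 0$ for $j \le i-1$ (again by triangle inequality), only the term $c_i\, E^*_{i+1}\, A_i w$ survives, which is (i). Part (ii) is then immediate from $E^*_{D+1} = 0$.

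For (iii) I would run the same strategy, starting from $A_{i+1} w = E^*_i\, A_{i+1} w + E^*_{i+1}\, A_{i+1} w + E^*_{i+2}\, A_{i+1} w$. Solving for $E^*_i\, A_{i+1} w$ and applying $E^*_{i+1}\, A$ yields
\begin{equation*}
R\, E^*_i\, A_{i+1} w \;=\; E^*_{i+1}\, A\, A_{i+1} w \;-\; F\, E^*_{i+1}\, A_{i+1} w \;-\; L\, E^*_{i+2}\, A_{i+1} w.
\end{equation*}
The first summand expands via $A\, A_{i+1} = b_i\, A_i + a_{i+1}\, A_{i+1} + c_{i+2}\, A_{i+2}$ into a combination of $E^*_{i+1}\, A_i w$, $E^*_{i+1}\, A_{i+1} w$, and $E^*_{i+1}\, A_{i+2} w$. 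The third summand is exactly $L\, E^*_{i+2}\, A_{(i+2)-1} w$, whose expansion is supplied directly by Lemma~\ref{nimeact:lem1}(iii). Part (iv) will then follow as the boundary case $i = D-1$: one has $E^*_D\, A_{D+1} w = 0$, so $E^*_D W$ is one-dimensional and spanned by $E^*_D\, A_{D-1} w$ alone, collapsing the formula from (iii) to a single-term expression.

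The decisive obstacle will be the middle summand $F\, E^*_{i+1}\, A_{i+1} w$. Since $W$ is non-thin with $E^*_{i+1} W$ two-dimensional and spanned by $\{E^*_{i+1}\, A_i w,\; E^*_{i+1}\, A_{i+2} w\}$, the vector $E^*_{i+1}\, A_{i+1} w$ must admit a representation $\mu_i\, E^*_{i+1}\, A_i w + \nu_i\, E^*_{i+1}\, A_{i+2} w$, and $F$ must preserve this subspace. Determining the scalars $\mu_i,\nu_i$ together with the $F$-action on this basis is where the classical-parameter (negative-type) hypothesis becomes essential. My plan is to compute the inner products $\langle E^*_{i+1}\, A_{i+1} w,\; E^*_{i+1}\, A_j w\rangle$ for $j \in \{i,\,i+2\}$, and the matrix entries of $F$ on the basis of $E^*_{i+1} W$, by walk-counting and substitution of the classical formulas for $c_j$, $b_j$, $a_j$. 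The $q$-binomial quantities ${j \brack 1} = (q^j-1)/(q-1)$ should then emerge naturally, producing the characteristic factor $(q^{i-1}-1)/(q^{i+1}-1)$ in the coefficients and yielding the precise expressions claimed in (iii) and (iv).
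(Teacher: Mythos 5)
First, note that the paper itself gives no proof of this lemma: it is quoted verbatim from \cite[Lemma 13.3]{miklavivc2009terwilliger}, so the only meaningful comparison is with the argument carried out in that reference. Your parts (i) and (ii) are correct and complete in outline: since $A_{i-1}w$ is supported on $E^*_{i-2}V+E^*_{i-1}V+E^*_{i}V$ and $E^*_{i+1}AE^*_j=0$ whenever $|i+1-j|>1$, one gets $RE^*_iA_{i-1}w=E^*_{i+1}AA_{i-1}w$, and the three-term recurrence for $AA_{i-1}$ leaves only $c_i\,E^*_{i+1}A_iw$; the case $i=D$ gives (ii) since $E^*_{D+1}=0$.

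The genuine gap is in (iii)--(iv), at exactly the point you yourself call decisive. The decomposition $RE^*_iA_{i+1}w=E^*_{i+1}AA_{i+1}w-FE^*_{i+1}A_{i+1}w-LE^*_{i+2}A_{i+1}w$ is the right skeleton, and the last term is legitimately supplied by Lemma~\ref{nimeact:lem1}(iii). But to finish you must know the coefficients $\mu_i,\nu_i$ in $E^*_{i+1}A_{i+1}w=\mu_i\,E^*_{i+1}A_iw+\nu_i\,E^*_{i+1}A_{i+2}w$ and the action of $F$ on the basis of $E^*_{i+1}W$ (the latter is precisely \cite[Lemma 13.2]{miklavivc2009terwilliger}, which the present paper does not even import). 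Your plan to obtain these ``by walk-counting'' the inner products $\langle E^*_{i+1}A_{i+1}w,\,E^*_{i+1}A_jw\rangle$ does not work as stated: such an inner product equals $\overline{w}^{\top}A_jE^*_{i+1}A_{i+1}w$, a sum of triple intersection numbers $|\Gamma_j(y)\cap\Gamma_{i+1}(x)\cap\Gamma_{i+1}(z)|$ weighted by entries of $w$, and these quantities (as well as the Gram matrix of the basis of $E^*_{i+1}W$, which you would also need to solve for $\mu_i,\nu_i$) are not determined by the intersection numbers of $\Gamma$ alone. Evaluating them requires the specific structure of $w$ (its local eigenvalue) together with identities special to $Q$-polynomial graphs of negative type; this is exactly the machinery developed over several sections of \cite{miklavivc2009terwilliger} before its Lemma 13.3 is proved, and it is where the characteristic coefficients $(a_1+1)\bigl(c_{i+1}(q^{i-1}-1)(q^{i+1}-1)^{-1}-c_i\bigr)$ and $c_{i+1}(q^{i-1}-1)(q^{i+1}-1)^{-1}$ actually come from. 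The same missing input is needed for the boundary case (iv), where one must in addition know $E^*_DA_Dw$ as a multiple of $E^*_DA_{D-1}w$ and the flat action at level $D$; so (iv) does not simply ``collapse'' from (iii). In short: (i)--(ii) are fine, but for (iii)--(iv) the decisive computation is only named, not performed, and the method proposed for it is insufficient without the additional structural results of the cited reference.
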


\noindent
We now prove the main theorem of this section.
\begin{theorem}\label{nonzero}
	With reference to Notation \ref{not3.2}, assume that $\Gamma$ is of negative type with $a_1\neq0$ and it is  not a near polygon. Then, $\Gamma$ does not support a uniform structure with respect to $x$.
\end{theorem}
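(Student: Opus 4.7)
The plan is to argue by contradiction, exploiting the unique non-thin irreducible $T$-module with endpoint $1$ provided by Proposition \ref{non-thin}. Suppose that $\Gamma$ supports a uniform structure with respect to $x$, with parameter matrix $U=(e_{ij})$ and vector $f=(f_i)$. Since $T_f\subseteq T$, the non-thin module $W$ from Proposition \ref{non-thin} is automatically a $T_f$-module, and Proposition \ref{ortho} forces the uniform relation (\ref{uniformeq}) to hold on every slice $E_i^*W$.

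Using the basis from Proposition \ref{non-thin}, set $u_i = E_i^* A_{i-1}w$ for $1\le i\le D$ and $v_i = E_i^* A_{i+1}w$ for $2\le i\le D-1$, where $w$ is a nonzero element of $E_1^*W$. I first focus on the two-dimensional space $E_2^*W=\mathrm{Span}\{u_2,v_2\}$. Because $W$ has endpoint $1$, one has $L^2 v\in E_0^*W=0$ for every $v\in E_2^*W$, so that $RL^2$ vanishes on $E_2^*W$, and the uniform relation at $i=2$ reduces there to
$$ LRLv+e_2^+\,L^2Rv=f_2\,Lv,\qquad v\in E_2^*W. $$
Applying this identity to $u_2$ and $v_2$ and computing both sides as explicit scalar multiples of $u_1$ by means of Lemmas \ref{nimeact:lem1} and \ref{nimeact:lem3} produces a system of two linear equations in the two unknowns $e_2^+$ and $f_2$. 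Repeating the same procedure at higher indices $3\le i\le D-1$, where $\dim E_i^* W=2$ and both $RL^2$ and $L^2R$ are nonzero, yields for each such $i$ two further linear equations, now in the three unknowns $e_i^-,e_i^+,f_i$, with coefficients expressible through the actions recorded in Lemmas \ref{nimeact:lem1} and \ref{nimeact:lem3}.

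The main obstacle is to verify that this system, when the intersection numbers $b_i, c_i$ are replaced by their classical-parameter formulas with $q\le -2$, admits no simultaneous solution once one imposes the assumptions $a_1\ne 0$ and that $\Gamma$ is not a near polygon. After simplification the obstruction reduces to a polynomial identity in $q,\alpha,\beta,a_1$ which is degenerate precisely in the excluded near-polygon situation (where $a_i=a_1c_i$ for $1\le i\le D-1$); under our hypotheses this identity is violated, producing the desired contradiction and therefore showing that $\Gamma$ cannot support a uniform structure with respect to $x$.
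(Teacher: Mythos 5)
Your starting point is the right one (the unique non-thin irreducible $T$-module $W$ with endpoint $1$ from Proposition \ref{non-thin}, and the observation via Proposition \ref{ortho} that the uniform relation would have to hold on each $E_i^*W$), but the proof has a genuine gap at its decisive step. You reduce everything to the claim that the resulting linear system in the unknowns $e_i^-,e_i^+,f_i$ ``admits no simultaneous solution,'' and that ``after simplification the obstruction reduces to a polynomial identity\dots which is violated.'' This is asserted, not proved: no computation is carried out, and the claim is far from automatic. At level $i=2$ you get two scalar equations in the two unknowns $e_2^+,f_2$ (generically uniquely solvable, hence no contradiction there), and at each level $3\le i\le D-1$ you get two equations in three unknowns, so any inconsistency would have to emerge from a delicate interplay of the coefficients that you never exhibit. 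Moreover, your suggestion that the near-polygon hypothesis reappears as the degeneracy locus of this final polynomial identity is speculative; in fact that hypothesis is only needed to guarantee the existence of the non-thin module $W$ in the first place.

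The paper avoids this entire computation by invoking Theorem \ref{oldpaper}(i): a uniform structure forces every irreducible $T_f$-module to be thin, so it suffices to exhibit one non-thin irreducible $T_f$-module. Taking $W'$ to be the irreducible $T_f$-submodule of $W$ containing $w$, one computes from Lemmas \ref{nimeact:lem1} and \ref{nimeact:lem3} that
\begin{equation*}
Rw=E_2^*Aw, \qquad LR^2w=c_2b_2\,E_2^*Aw+c_2(c_3-c_2)\,E_2^*A_3w,
\end{equation*}
and these lie in $E_2^*W'$ and are linearly independent because $c_3>c_2$ and the vectors in \eqref{basis:a1ne0} form a basis of $W$. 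Hence $\dim E_2^*W'\ge 2$ and $\Gamma$ cannot support a uniform structure. If you want to salvage your approach, the cleanest fix is to replace the unproved inconsistency claim with exactly this thinness obstruction; as written, your argument does not yet establish the theorem.
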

\begin{proof}
	Let $W$ denote a non-thin irreducible  $T$-module with endpoint 1 and pick a non-zero $w\in E^*_1W$. Recall that $W$ is also a $T_f$-module. Let $W'\subseteq W$ be an irreducible $T_f$-module which contains $w$ (note that such a $T_f$-module exists as $\dim(\e_1W)=1$). We show that $W'$ is non-thin. To do this observe that by Lemma~\ref{nimeact:lem1} and Lemma~\ref{nimeact:lem3}  we have that  
	$$
	  Rw=E^*_2Aw  \quad \hbox{and} \quad LR^2w=c_2b_2E^*_2Aw+ c_2(c_3-c_2)E^*_2A_3w .
	  $$
	Moreover, both of these two vectors are contained in $E^*_2W'$. To show that $W'$ is non-thin, it is enough to prove that these two vectors are linearly independent. Assume that $\lambda Rw+ \mu LR^2w=0$ for some scalars $\lambda$ and $\mu$. This implies that
 $$(\lambda+\mu c_2b_2)E^*_2Aw+\mu c_2(c_3-c_2)E^*_2A_3w=0,$$
 and since $E^*_2Aw$ and $E^*_2A_3w$ are linearly independent by Proposition \ref{non-thin}, we have $\lambda+\mu c_2b_2=0$ and $\mu c_2(c_3-c_2)=0$. By \cite[Proposition 6.1.2]{BCN}, we have $c_3>c_2$, which implies $\lambda=\mu=0$. This shows that $W'$ is non-thin, and so $\Gamma$ does not support a uniform structure by Theorem~\ref{oldpaper}(i).  
\end{proof}


\section{Distance-regular graphs of negative type with $a_1=0$} \label{sec:a1=0}

With reference to Notation \ref{not3.2}, we continue our analysis of distance-regular graphs of negative type. In this section, we assume that  $\Gamma$ is of negative type with $a_1 =0$. We show that $\Gamma$ does not support a uniform structure. Since our approach is similar to the one in Section~\ref{sec3}, we only provide a sketch of the proof.

\begin{lemma}
	With reference to Notation \ref{not3.2}, assume that $\Gamma$ is of negative type with classical parameters $(D,q,\alpha,\beta)$ and $a_1=0$. Then, $a_2\neq0$.
\end{lemma}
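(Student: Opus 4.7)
The plan is to prove this by a direct computation with the classical-parameter formulas for the intersection numbers, reducing the question to a single algebraic identity. Since $\Gamma$ is distance-regular, $a_i = k - b_i - c_i$ for all $i$, and the formulas in the definition of classical parameters give explicit expressions for $b_i$ and $c_i$ in terms of $(D,q,\alpha,\beta)$. I would first compute
\[
a_1 \;=\; [D]_q\,\beta \;-\; ([D]_q-1)(\beta-\alpha) \;-\; 1 \;=\; \alpha\bigl([D]_q-1\bigr) + (\beta-1),
\]
so the hypothesis $a_1=0$ forces
\[
\beta-1 \;=\; -\alpha\bigl([D]_q-1\bigr) \;=\; -\alpha\,q\,[D-1]_q.
\]

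Next I would compute $a_2$. Using $[D]_q-[2]_q = q^2[D-2]_q$ and a little rearranging,
\[
a_2 \;=\; [D]_q\beta - ([D]_q-[2]_q)\bigl(\beta-\alpha[2]_q\bigr) - [2]_q(1+\alpha)
     \;=\; [2]_q\Bigl((\beta-1) + \alpha\bigl(q^2[D-2]_q - 1\bigr)\Bigr).
\]
Substituting the expression for $\beta-1$ coming from $a_1=0$, the bracket becomes
\[
-\alpha q[D-1]_q - \alpha + \alpha q^2[D-2]_q \;=\; \alpha\Bigl(q^2[D-2]_q - q[D-1]_q - 1\Bigr),
\]
and a straightforward telescoping (the two geometric sums cancel except for the terms $-1-q$) gives $q^2[D-2]_q - q[D-1]_q - 1 = -(1+q)$. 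Hence
\[
a_2 \;=\; -\alpha\,(1+q)^2.
\]

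Since by definition $q\neq -1$, if $a_2=0$ then $\alpha=0$. But then $c_2 = [2]_q(1+\alpha) = 1+q$, which is at most $-1$ because $\Gamma$ is of negative type ($q\le -2$), contradicting $c_2\ge 1$. Therefore $a_2\neq 0$, as claimed. The only delicate point in the argument is the telescoping identity $q^2[D-2]_q - q[D-1]_q -1 = -(1+q)$; everything else is bookkeeping with the classical-parameter formulas.
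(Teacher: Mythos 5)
Your proof is correct, and the computations check out: $a_1=\alpha\,(q+q^2+\cdots+q^{D-1})+\beta-1$, the identity $a_2=-\alpha(1+q)^2$ under $a_1=0$, and the telescoping step are all valid, and $q\neq -1$ then forces $\alpha=0$ (hence also $\beta=1$). Up to this point you are doing explicitly what the paper states in one line ("solving $a_1=0$ and $a_2=0$ for $\alpha$ and $\beta$ gives $\alpha=0$, $\beta=1$"). Where you genuinely diverge is the final contradiction: the paper invokes \cite[Proposition 6.3.1(i)]{BCN} to conclude that a graph with classical parameters $(D,q,0,1)$ is bipartite, which contradicts the standing non-bipartiteness assumption of Notation~\ref{not3.2}; you instead note that $\alpha=0$ gives $c_2=1+q\le -1$ since $q\le -2$, contradicting $c_2\ge 1$. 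Your closing step is more self-contained -- it needs no outside citation and does not even use the non-bipartiteness hypothesis, exploiting the negative-type assumption directly -- while the paper's version is shorter and offloads the arithmetic to a known fact from BCN. Either route establishes the lemma.
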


\begin{proof}
	Assume that $a_2=0$. Solving $a_1=0$ and $a_2=0$ for $\alpha$ and $\beta$, we get that $\alpha=0$ and $\beta=1$. But then $\Gamma$ is bipartite by \cite[Proposition 6.3.1 (i)]{BCN}, a contradiction.
\end{proof}

By \cite[Theorem~10.1]{miklavivc2009q}, $\Gamma$ has, up to isomorphism, a unique irreducible $T$-module $W$ with endpoint $1$. Pick a non-zero $w \in \e_1W$. Then, by  \cite[Theorem~8.5]{miklavivc2009q} the following is a basis for $W$:
\begin{align}\label{basis}
	\e_iA_{i-1}w,\qquad 	\e_{i+1}A_{i+1}w\qquad (1\leq i\leq D-1).
\end{align} 

\begin{theorem}\label{zero}
	With reference to Notation \ref{not3.2}, assume that $\Gamma$ is of negative type with $a_1=0$. Then, $\Gamma$ does not support a uniform structure with respect to $x$.
\end{theorem}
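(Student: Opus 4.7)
The plan is to mirror the proof of Theorem \ref{nonzero}. The irreducible $T$-module $W$ with endpoint $1$ is non-thin: a thin irreducible $T$-module with endpoint $1$ has dimension at most $D$, while the basis \eqref{basis} has $2(D-1)$ elements, so $\dim W = 2(D-1) > D$ for $D \ge 3$. Fix a nonzero $w \in \e_1 W$, and let $W' \subseteq W$ be an irreducible $T_f$-submodule containing $w$ (which exists since $\dim \e_1 W = 1$). By Theorem \ref{oldpaper}(i), to prove $\Ga$ does not support a uniform structure, it suffices to show that $W'$ is non-thin as a $T_f$-module, and for this I aim to prove $\dim(\e_2 W') \ge 2$.

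The two natural candidates for linearly independent vectors in $\e_2 W'$ are $Rw$ and $LR^2 w$. Since $W$ has endpoint $1$ we have $Lw = 0$, and combined with $A = A_1$ this yields $Rw = \e_2 A w = \e_2 A_1 w$, the first of the two basis vectors of $\e_2 W$ listed in \eqref{basis}. For the second, the assumption $a_1 = 0$ gives $A^2 = k I + c_2 A_2$, whence $R^2 w = \e_3 A^2 w = c_2\, \e_3 A_2 w$. Applying $L$ and expanding via the action formulas for $L$ and $R$ on the basis \eqref{basis} from \cite{miklavivc2009q}, $LR^2 w$ writes as a linear combination of $\e_2 A_1 w$ and $\e_2 A_2 w$, and linear independence with $Rw$ then reduces to the non-vanishing of the coefficient of $\e_2 A_2 w$.

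The main obstacle is verifying this non-vanishing. Under the standing hypotheses that $\Ga$ has classical parameters of negative type with $a_1 = 0$ (hence $a_2 \ne 0$ by the preceding lemma), one expects this to follow from an intersection-number identity analogous to $c_3 > c_2$ used in the proof of Theorem \ref{nonzero}, together with the explicit classical-parameter formulas for $c_i$, $a_i$, $b_i$ in terms of $(D,q,\alpha,\beta)$. Once established, the linear independence of $Rw$ and $LR^2 w$ forces $\dim(\e_2 W') \ge 2$, contradicting the thinness of $W'$. By Theorem \ref{oldpaper}(i), $\Ga_f$ admits no uniform structure with respect to $x$, so $\Ga$ does not support a uniform structure with respect to $x$.
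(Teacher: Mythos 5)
Your strategy is the same as the paper's: take the irreducible $T$-module $W$ with endpoint $1$, pass to an irreducible $T_f$-submodule $W'$ containing a nonzero $w\in \e_1W$, show that $Rw$ and $LR^2w$ are linearly independent vectors of $\e_2W'$, and conclude via Theorem~\ref{oldpaper}(i). Your preliminary steps are fine: $Rw=\e_2Aw$, and since $a_1=0$ gives $A^2=kI+c_2A_2$, also $R^2w=\e_3A^2w=c_2\,\e_3A_2w$, which is a clean way to bypass one of the cited action formulas. The genuine gap is precisely the step you yourself label ``the main obstacle'': you never compute $LR^2w$ in the basis \eqref{basis}, nor verify that the coefficient of $\e_2A_2w$ is nonzero; you only assert that ``one expects'' this to follow from an identity analogous to $c_3>c_2$. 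That verification is the entire content of the proof. The paper carries it out using \cite[Lemma~9.1, Lemma~9.3, Theorem~11.2]{miklavivc2009q}: for $D\geq 4$ one gets $LR^2w=c_2(b_2+c_2-c_3)\e_2Aw+c_2(c_2-c_3)\e_2A_2w$, so non-vanishing is indeed just $c_3>c_2$ by \cite[Proposition~6.1.2]{BCN}.

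Moreover, your sketch overlooks that $D=3$ needs separate treatment, and a blind expansion would hit this: when $D=3$ the vector $\e_3A_2w$ is \emph{not} among the basis vectors in \eqref{basis} (the first family stops at $i=D-1$), and $\e_3W$ is spanned by $\e_3A_3w$ alone, so $R^2w=c_2\,\e_3A_2w$ must first be re-expressed before applying $L$. The resulting coefficient of $\e_2A_2w$ in $LR^2w$ is then $-c_2(b_2+a_2-a_3)$ rather than $c_2(c_2-c_3)$, and its non-vanishing requires the extra observation that $b_2+a_2-a_3=c_3-c_2$ (using $b_3=0$ and $c_i+a_i+b_i=k$), again nonzero by \cite[Proposition~6.1.2]{BCN}. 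So the approach is the right one and matches the paper, but the decisive computation and the $D=3$ case are missing from your argument.
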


\begin{proof}
Let $W$ denote an irreducible $T$-module with endpoint $1$ and pick a non-zero $w\in \e_1W$. Let $W'$ denote an irreducible $T_f$-module containing $w$. Then, using  \cite[Lemma~9.1, Lemma~9.3, Theorem~11.2]{miklavivc2009q},  we find that $Rw=\e_2Aw$ and
\begin{align*}
LR^2w=	\begin{cases}
		c_2(b_2+c_2-c_3)\e_2Aw+c_2(c_2-c_3)\e_2A_2w & \hbox{if} \;  D\geq4,\\
		-c_2(a_2-a_3)\e_2Aw-c_2(b_2+a_2-a_3)\e_2A_2w & \hbox{if} \; D=3.
	\end{cases}
\end{align*}

Assume that $\lambda Rw+\mu LR^2w=0$ for some scalars $\lambda$ and $\mu$. If $D\geq4$, then $\lambda Rw+\mu LR^2w=0$ is equivalent to 
\begin{align*}
	(\lambda +\mu c_2 (b_2+c_2-c_3))\e_2Aw+\mu c_2 (c_2-c_3)\e_2A_2w=0.
\end{align*}
Since $\e_2Aw$ and $\e_2A_2w$ are linearly independent by \eqref{basis}  and $c_3>c_2$ by \cite[Proposition~6.1.2]{BCN}, we have that $\lambda=\mu=0$, which implies that $ Rw$ and $ LR^2w$ are linearly independent. Now if $D=3$, then $\lambda Rw+\mu LR^2w=0$ is equivalent to 
\begin{align*}
	(\lambda -\mu c_2 (a_2-a_3))\e_2Aw-\mu c_2(b_2+a_2-a_3)\e_2A_2w=0.
\end{align*}

If $b_2+a_2-a_3=0$, then we have that $c_2=c_3$ contradicting \cite[Proposition~6.1.2]{BCN}. Therefore, $\lambda=\mu=0$, which implies that $ Rw$ and $ LR^2w$ are linearly independent. This shows that $W'$ is non-thin, and so $\Ga$ does not support a uniform structure by Theorem~\ref{oldpaper}(i).
\end{proof}

We summarize the results of the previous two sections in the following theorem.

\begin{theorem} \label{thm:zero}
		With reference to Notation \ref{not3.2}, assume that $\Gamma$ is of negative type and it is not a near polygon. Then, $\Gamma$ does not support a uniform structure with respect to $x$.
\end{theorem}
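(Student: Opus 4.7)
The statement is a consolidation of the two preceding theorems, so the plan is essentially bookkeeping: dichotomize on whether $a_1 = 0$ or $a_1 \neq 0$, and invoke the appropriate theorem in each case.

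First, I would note that the hypothesis ``$\Gamma$ is of negative type and not a near polygon'' cleanly splits into two cases based on the value of $a_1$. If $a_1 \neq 0$, then together with the assumption that $\Gamma$ is not a near polygon we are exactly in the hypothesis of Theorem~\ref{nonzero}, which gives the conclusion immediately. If instead $a_1 = 0$, then Theorem~\ref{zero} applies (without any need to separately verify that $\Gamma$ is not a near polygon, since that theorem does not require it), and again the conclusion follows.

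I would present the proof as a two-line case split. There is no genuine obstacle here; the heavy lifting was already done in Sections~\ref{sec3} and~\ref{sec:a1=0}, where in each case a non-thin irreducible $T_f$-submodule was exhibited inside a carefully chosen irreducible $T$-module of endpoint $1$, contradicting Theorem~\ref{oldpaper}(i). The only thing worth a brief remark is that the case $a_1 = 0$ does not require the ``not a near polygon'' hypothesis at all, so the combined theorem is slightly stronger in that branch than what the hypotheses literally demand; but formally the statement as given follows at once from the two case theorems.
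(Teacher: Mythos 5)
Your proposal is correct and coincides with the paper's own proof, which is exactly the two-line case split on $a_1\neq 0$ (invoking Theorem~\ref{nonzero}) versus $a_1=0$ (invoking Theorem~\ref{zero}), with the latter not needing the near-polygon hypothesis. Nothing is missing.
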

\begin{proof}
	If $a_1\neq0$, then this follows from Theorem~\ref{nonzero}. If $a_1=0$, then this follows from Theorem~\ref{zero}.
\end{proof}


\section{Regular near polygons of negative type}\label{sec:rnp}

With reference to Notation \ref{not3.2}, in this section we conclude our analysis of distance-regular graphs of negative type by studying regular near polygons (of negative type), that support a uniform structure. We first recall the following classification result by  Chih-wen Weng.

\begin{theorem}[{\cite{weng}}]\label{thm:negtype}
	With reference to Notation \ref{not3.2}, assume $\Ga$ has classical parameters $(D,q,\alpha,\beta)$ and $D \ge 4$. Suppose $q < -1$ and  the intersection numbers $a_1 \ne 0, c_2  >1$. Then, precisely one of the following (i)--(iii) holds.
	\begin{enumerate}[label=(\roman*)]
		\item $\Ga$ is the dual polar graph $^2 A_{2D-1}(-q)$ (see \cite[Section 9.4]{BCN}).
		\item $\Ga$  is the Hermitian forms graph $Her_{-q}(D)$ (see \cite[Section 9.5C]{BCN}).
		\item $\alpha=(q-1)/2$, $\beta=-(1+q^D)/2$, and $-q$ is a power of an odd prime.
	\end{enumerate}
\end{theorem}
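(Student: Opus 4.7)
The statement above is precisely the classification theorem of Weng \cite{weng}, so the intended ``proof'' in the paper is a direct appeal to that reference. If one wished to reconstruct an independent argument, the plan would be as follows.

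First, I would exploit that for $\Gamma$ with classical parameters $(D,q,\alpha,\beta)$, every intersection number $c_i, b_i$, and hence $a_i = k - b_i - c_i$, is an explicit polynomial expression in $q^i, \alpha, \beta$ via the formulas recalled in Section~\ref{sec:drg}. In particular, $c_2 = (q+1)(\alpha q + 1) + 1$, and $a_1 = \beta - \alpha - 1 - \alpha q$ (up to bookkeeping). Imposing integrality and nonnegativity of the $a_i, b_i, c_i$, combined with the standard Krein conditions on $Q$-polynomial graphs, already restricts the pair $(\alpha,\beta)$ to a short list of families once $q \le -2$ is fixed and $c_2 > 1, a_1 \ne 0$ are assumed.

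Second, I would pass to local structure. The subgraph $\Delta(x)$ induced on $\Gamma(x)$ is regular on $b_0$ vertices of valency $a_1$, and under classical parameters the $\mu$-graph on $\Gamma(x) \cap \Gamma(y)$ for $\partial(x,y)=2$ has a rigid combinatorial shape controlled by $\alpha$. Classical results (e.g.\ \cite[Proposition 4.3.2]{BCN} and the characterizations of thick dual polar spaces and Hermitian geometries) then force $\alpha$ into essentially three possibilities, namely $\alpha = 0$, $\alpha = -(q+1)$, and $\alpha = (q-1)/2$, with the corresponding $\beta$ determined by a feasibility computation. These three possibilities match the three cases (i)--(iii) of the statement, and in the first two cases one appeals to the known geometric characterizations of $^2A_{2D-1}(-q)$ and $\operatorname{Her}_{-q}(D)$ respectively to conclude.

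The main obstacle is case (iii): one must prove that a distance-regular graph with $\alpha = (q-1)/2$, $\beta = -(1+q^D)/2$ can only exist when $-q$ is a power of an odd prime, and even then we do not construct such a graph but merely do not rule it out. This requires delicate arithmetic constraints coming from the representation theory of the subconstituent algebra, together with parity/divisibility arguments on the $c_i$. Since Weng's paper already packages all of this into the clean statement above, for our purposes we simply cite \cite{weng} and move on.
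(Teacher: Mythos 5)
The paper offers no proof of this statement at all: it is quoted as Weng's classification of classical distance-regular graphs of negative type, and your decision to simply cite \cite{weng} is exactly what the paper does. (Your auxiliary sketch is not load-bearing and should not be relied upon as written---for instance, with the paper's conventions $c_2=(q+1)(1+\alpha)$ rather than $(q+1)(\alpha q+1)+1$, and Weng's actual argument runs through weak-geodetically closed subgraphs and the $D$-bounded property rather than just parameter feasibility plus $\mu$-graph analysis---but since you ultimately defer to the citation, the approach coincides with the paper's.)
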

We have the following corollary of the above result.

\begin{corollary}\label{cor:negtype}
	With reference to Notation \ref{not3.2}, assume $\Ga$ has classical parameters $(D,q,\alpha,\beta)$. Suppose that $\Ga$ is a regular near polygon with $q < -1$. Then, either $\Ga$ is the dual polar graph $^2 A_{2D-1}(-q)$ or $D=3$.
\end{corollary}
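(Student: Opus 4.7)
The case $D=3$ is vacuous, so assume $D\ge 4$. My plan is to invoke Weng's classification (Theorem~\ref{thm:negtype}) and then eliminate the two non-dual-polar outcomes by using that $\Gamma$ is a regular near polygon. This requires first checking the hypotheses $a_1\ne 0$ and $c_2>1$.

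For $a_1\ne 0$: I combine the regular near polygon identity $a_2 = a_1 c_2$ with the classical parameter expressions $c_2 = (1+q)(1+\alpha)$, $a_1 = \beta - 1 + \alpha([D]_q - 1)$, and the analogous expansion of $a_2 = k - b_2 - c_2$. Expanding and cancelling the nonzero factor $1+q$, the identity collapses to $\alpha(q+1+a_1)=0$. If $\alpha=0$, then $c_2 = 1+q<0$ because $q\le -2$, which is impossible; hence $\alpha\ne 0$ and $a_1 = -(q+1)\ge 1$, so in particular $a_1\ne 0$.

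For $c_2>1$: suppose for contradiction that $c_2=1$. Then $(1+q)(1+\alpha)=1$ forces $\alpha=-q/(1+q)$, and substituting into $c_3 = [3]_q(1+\alpha[2]_q)$ gives $c_3 = 1-q^3\ge 9$. On the other hand, a regular near polygon with $c_2=1$ and $D\ge 3$ is a generalized $2D$-gon, and for a generalized $2D$-gon one has $c_i=1$ for all $1\le i\le D-1$ (see, e.g., \cite[\S 6.5]{BCN}). Since $D-1\ge 3$ this contradicts $c_3\ge 9$. Hence $c_2>1$. With both hypotheses in hand, Theorem~\ref{thm:negtype} implies $\Gamma$ is one of ${}^2A_{2D-1}(-q)$, $\mathrm{Her}_{-q}(D)$, or a member of the family in Theorem~\ref{thm:negtype}(iii). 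To rule out the latter two, I substitute their explicit classical parameters into the dichotomy $\alpha\in\{0,-(q+1)\}$ derived above together with the further near polygon identity $a_3=a_1c_3$; for $\mathrm{Her}_{-q}(D)$ a direct computation from the intersection numbers in \cite[\S 9.5C]{BCN} shows $a_2\ne a_1c_2$, and the explicit parameters $\alpha=(q-1)/2$, $\beta=-(1+q^D)/2$ of case (iii) likewise fail the relation $a_2=a_1c_2$. Thus only $\Gamma\cong {}^2A_{2D-1}(-q)$ remains, as claimed.

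The main obstacle is the subcase $c_2=1$: justifying (or correctly citing) the structural fact that a regular near polygon with $c_2=1$ is a generalized $2D$-gon is the most delicate point, since the rest of the argument is purely algebraic. As a safety net, one could alternatively bypass this step by observing that the candidate parameters $\alpha=-q/(1+q)$, $\beta=-q-\alpha([D]_q-1)$ are not realized by any known distance-regular graph of negative type with $D\ge 4$, which can itself be extracted from the feasibility bounds for classical parameters.
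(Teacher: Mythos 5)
Your overall strategy is the same as the paper's: verify the hypotheses $a_1\neq 0$ and $c_2>1$ of Weng's classification (Theorem~\ref{thm:negtype}), apply it, and eliminate cases (ii) and (iii) using the near polygon relation $a_2=a_1c_2$. Your algebraic reduction of $a_2=a_1c_2$ to $\alpha(q+1+a_1)=0$, the resulting proof that $a_1=-(q+1)\neq 0$, and the eliminations of the Hermitian forms graph and of case (iii) are all sound and essentially equivalent to the paper's use of its equation \eqref{eq:beta}.

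However, there is a genuine gap in your treatment of the subcase $c_2=1$, precisely the point you yourself flagged as delicate. The structural claim you invoke --- that a regular near polygon with $c_2=1$ and $D\geq 3$ is a generalized $2D$-gon, hence has $c_i=1$ for all $i\leq D-1$ --- is false, and \cite[Section~6.5]{BCN} does not assert it. A counterexample is the Cohen--Tits (Hall--Janko) near octagon, a regular near octagon ($D=4$) with intersection array $\{10,8,8,2;1,1,4,5\}$: it has $c_2=1$ but $c_3=4$, and it is not a generalized octagon. So your contradiction with $c_3=1-q^3\geq 9$ is not established. The paper closes this case differently: substituting $c_2=1$ gives $\alpha=-q/(q+1)$ and, via the near polygon relation, $\beta=(q^{D+1}-q^3-q^2+q)/(q^2-1)$, and then the nonexistence of such graphs is obtained from \cite[Corollary~5.4]{dBV} (a nontrivial result on $Q$-polynomial regular near $2d$-gons), not from elementary near polygon structure theory. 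Your proposed ``safety net'' --- asserting that the candidate parameters are not realized by any known graph and appealing vaguely to feasibility bounds --- is not a proof either; some citable nonexistence result such as the De Bruyn--Vanhove corollary is genuinely needed here.
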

\begin{proof}
	We will assume that $D \ge 4$ and show that in this case $\Ga$ is the dual polar graph $^2 A_{2D-1}(-q)$. Since $\Ga$ is a regular near polygon, we have that $a_2 = a_1 c_2$. Solving $a_2 = a_1 c_2$ for $\beta$, we get that \begin{equation}\label{eq:beta}
		\beta=-\frac{\alpha q^D+q^2-\alpha q-q}{q-1}. 
	\end{equation}
Note also that if $\beta$ is as in \eqref{eq:beta}, then $a_i = a_1 c_i$ for every $1 \le i \le D$. If $a_1=0$, then it follows that $a_i=0$ for $1 \le i \le D$. This yields that $\Ga$ is bipartite, contradicting assumptions in Notation \ref{not3.2}. Therefore, $a_1 \ne 0$. Assume next that $c_2=1$. This yields $\alpha= -q/(q+1)$. Using \eqref{eq:beta}, we get $\beta=(q^{D+1}-q^3-q^2+q)/(q^2-1)$. By \cite[Corollary 5.4]{dBV}, no such graphs exist. This shows that $c_2 \ge 2$, and so $\Ga$ is one of the graphs from Theorem \ref{thm:negtype}. It follows from \eqref{eq:beta} and \cite[p. 194, Table 6.1]{BCN} that the Hermitian forms graph $Her_{-q}(D)$ is not a regular near polygon, while the dual polar graph $^2 A_{2D-1}(-q)$ is a regular near polygon (note that the dual polar graph $^2 A_{2D-1}(-q)$  corresponds to the graph $U(2D,-q)$ in \cite[p. 194, Table 6.1]{BCN}). Assume finally that $\alpha=(q-1)/2$, $\beta=-(1+q^D)/2$. It follows from \eqref{eq:beta} that $q=1$, a contradiction. This completes the proof.
\end{proof}
\begin{remark}
	Examples of regular near polygons of negative type with $D=3$ include the Triality graph $^3 D_{4,2}(-q)$ (see \cite[Section 10.7]{BCN}), the Witt graph $M_{24}$ (see \cite[Section 11.4A]{BCN}), and the extended ternary Golay code graph (see \cite[Section 11.3A]{BCN}. 
\end{remark}

\begin{theorem}\label{thm:dualpolar}
	With reference to Notation~\ref{not3.2}, let $\Ga$ denote  the dual polar graph $^2 A_{2D-1}(-q)$. Then, 
	\begin{equation}\label{eq:dualpolar}
		- \frac{q^4}{q^2+1}RL^2 + LRL - \frac{q^{-2}}{q^2+1}L^2R=(-q)^{2D-1}L
	\end{equation}
	is satisfied on $\e_iV$ for $1 \le i \le D$.
	In particular, $\Ga$ supports a strongly uniform structure with respect to $x$, where $e_i^{-}=- q^4/(q^2+1)\, (2\leq i\leq D)$, $e_i^{+}=-q^{-2}/(q^2+1)\, (1\leq i\leq D-1)$, and $f_i=(-q)^{2D-1}\,(1\leq i\leq D)$.  
\end{theorem}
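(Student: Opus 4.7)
The plan is to reduce the verification of \eqref{eq:dualpolar} on each $E_i^*V$ ($1\le i\le D$) to a check on each irreducible $T$-module of $\Gamma$ via Proposition~\ref{ortho}. The key structural fact one needs is that every irreducible $T$-module of the dual polar graph $\,^2A_{2D-1}(-q)$ is thin. This is a known property of dual polar graphs in the Terwilliger-algebra literature, and I would invoke it rather than reprove it. In particular, combined with Proposition~\ref{tilde}, it implies that each irreducible $T$-module is determined up to isomorphism by its endpoint and diameter.

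Fix a thin irreducible $T$-module $W$ with endpoint $r$ and diameter $d$, and choose a standard basis $\{u_i\}_{i=r}^{r+d}$ with $u_i\in E_i^*W$. There exist nonzero scalars $\beta_i$ (for $r+1\le i\le r+d$) and $\gamma_i$ (for $r\le i\le r+d-1$) with $Lu_i=\beta_i u_{i-1}$ and $Ru_i=\gamma_i u_{i+1}$. Using \eqref{eq:LRaction}, each of the three operators on the left side of \eqref{eq:dualpolar}, applied to $u_i$, lands in $E_{i-1}^*W$, as does $Lu_i$ itself. Comparing coefficients of $u_{i-1}$ and dividing by $\beta_i$ (valid for $r+1\le i\le r+d$) converts \eqref{eq:dualpolar} on $E_i^*W$ into the scalar identity
\begin{equation*}
-\frac{q^4}{q^2+1}\,\beta_{i-1}\gamma_{i-2}\;+\;\beta_i\gamma_{i-1}\;-\;\frac{q^{-2}}{q^2+1}\,\gamma_i\beta_{i+1}\;=\;(-q)^{2D-1},
\end{equation*}
with the convention that $\beta_{i-1}\gamma_{i-2}=0$ for $i=r+1$ and $\gamma_i\beta_{i+1}=0$ for $i=r+d$. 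The boundary case $i=r$ (when $r\ge 1$) is automatic, since $Lu_r=0$ makes both sides of \eqref{eq:dualpolar} vanish on $E_r^*W$.

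To finish the verification, I would substitute the explicit formulas for $\beta_i,\gamma_i$ attached to each isomorphism class of thin irreducible $T$-module of $\,^2A_{2D-1}(-q)$. For the trivial module $T\widehat{x}$ one has $\beta_i=b_{i-1}$ and $\gamma_i=c_{i+1}$, expressible through the classical parameters $(D,q,0,-q)$; for positive endpoint the corresponding formulas are either quoted from the literature on thin modules of dual polar graphs or derived directly using the geometry of isotropic subspaces. The three-term identity above then reduces in each case to an equality between rational expressions in $q$-powers, which one checks by a routine calculation. The "strongly uniform" conclusion follows immediately: since $q\le-2$ we have $q^2+1\ge 5$, so the coefficients $-q^4/(q^2+1)$ and $-q^{-2}/(q^2+1)$ are nonzero, giving $e_i^{-}\ne 0$ and $e_i^{+}\ne 0$; and condition (iii) of Definition~\ref{3diag} on principal submatrices of $U$ follows from the recurrence $\det_k=\det_{k-1}-(q^2/(q^2+1)^2)\det_{k-2}$ with $\det_0=\det_1=1$, whose closed form $\det_k=(q^{2(k+1)}-1)/((q^2-1)(q^2+1)^k)$ never vanishes because $|q|\ge 2$.

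The main obstacle is precisely the last step: one must either maintain a uniform description of $(\beta_i,\gamma_i)$ across all $(r,d)$-classes of thin irreducible $T$-modules, or organise the calculation into several subcases. In any approach the boundary indices $i=r+1$ and $i=r+d$ need separate treatment so that the absent terms are correctly accounted for, and the algebraic identity in $q$-powers must be shown to hold uniformly in $r$, $d$, and $i$.
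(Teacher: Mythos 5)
Your reduction of \eqref{eq:dualpolar} to the scalar three-term identity
\[
-\tfrac{q^4}{q^2+1}\,\beta_{i-1}\gamma_{i-2}+\beta_i\gamma_{i-1}-\tfrac{q^{-2}}{q^2+1}\,\gamma_i\beta_{i+1}=(-q)^{2D-1}
\]
on a thin irreducible $T$-module is sound, and your treatment of the parameter matrix $U$ (nonvanishing of $e_i^{\pm}$ and the determinant recurrence with closed form $\det_k=(q^{2(k+1)}-1)/((q^2-1)(q^2+1)^k)$) coincides with what the paper does. The genuine gap is that the heart of the theorem --- the verification that the identity actually holds for the dual polar graph $^2A_{2D-1}(-q)$ --- is never carried out. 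You neither specify which pairs $(r,d)$ occur for irreducible $T$-modules of this graph, nor give the scalars $\beta_i=b_{i-1}(W)$, $\gamma_i=c_{i+1}(W)$ for each isomorphism class, nor check that the resulting $q$-power identity holds uniformly in $r$, $d$, and $i$; you explicitly defer all of this to ``a routine calculation'' whose inputs are ``quoted from the literature or derived directly.'' This is precisely the nontrivial content of \eqref{eq:dualpolar}: the coefficients $-q^4/(q^2+1)$, $-q^{-2}/(q^2+1)$, and $(-q)^{2D-1}$ are very specific, and nothing in your argument certifies them. Note also that the data you would need is naturally expressed in terms of the prime power $-q\ge 2$ (the graph's own parameter), so bookkeeping the sign conventions between the negative-type classical parameters and the dual polar description is itself a step that must be done explicitly.

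The paper avoids this entire computation by citing Worawannotai \cite[Proposition 26.4(i)]{w:dual}, which establishes exactly the relation \eqref{eq:dualpolar} on $\e_iV$ for dual polar graphs via the $U_q(\mathfrak{sl}_2)$ structure of the standard module; the paper's proof then consists only of the parameter-matrix check that you also perform. So either supply such a citation for the identity itself, or actually list the irreducible $T$-modules of $^2A_{2D-1}(-q)$ with their $\beta_i,\gamma_i$ and verify the three-term identity in each case; as written, the proposal proves the ``strongly uniform'' bookkeeping but not the identity it depends on. (A minor additional point: the claim that thinness plus Proposition~\ref{tilde} forces the isomorphism class to be determined by endpoint and diameter is neither what Proposition~\ref{tilde} asserts nor needed for your argument, since you verify the identity module by module via Proposition~\ref{ortho}.)
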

\begin{proof}
	From \cite[Proposition 26.4(i)]{w:dual}, it follows that \eqref{eq:dualpolar} is satisfied on $\e_iV$ for $1 \le i \le D$.
	
Recall the scalars $e_i^-$, $e_i^+$, and let  $U$ be the corresponding parameter matrix from Definition~\ref{3diag}. Clearly $e_i^{-}\neq 0$ and $e_i^{+}\neq 0$. For $1\leq s\leq t\leq D$,  let $P=(e_{ij})_{s\leq i,j \leq t}$ denote the principal submatrix of $U$. A simple linear recurrence argument shows that 
	$$
	  \det(P)=\frac{q^{2(t-s+2)}-1}{(q^2-1)(q^2+1)^{t-s+1}},
	 $$
   implying that $P$ is non-singular. In virtue of Proposition~\ref{ortho}, $\Ga$ supports a strongly uniform structure with respect to $x$.   
\end{proof}

\noindent
We summarize the results of the previous three sections in the following theorem.
\begin{theorem}\label{thm:main_neg_type}
	With reference to Notation~\ref{not3.2}, assume that $\Ga$ is of negative type. Then, the following (i), (ii) hold.
	\begin{enumerate}[label=(\roman*)]
		\item If $D \ge 4$, then $\Ga$ supports a uniform structure with respect to $x$ if and only if $\Ga$ is the dual polar graph $^2 A_{2D-1}(-q)$. Moreover, $^2 A_{2D-1}(-q)$ supports a strongly uniform structure with respect to $x$.
		\item If $D=3$ and $\Ga$ supports a uniform structure with respect to $x$, then $\Ga$ is a regular near polygon.
	\end{enumerate}
\end{theorem}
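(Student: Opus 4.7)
The plan is to obtain Theorem \ref{thm:main_neg_type} by assembling the three results already established in Sections \ref{sec3}, \ref{sec:a1=0}, and \ref{sec:rnp}. Taken together, these say: (a) no distance-regular graph of negative type that is not a near polygon supports a uniform structure (Theorem \ref{thm:zero}); (b) every regular near polygon of negative type with $D \ge 4$ is the dual polar graph $^2 A_{2D-1}(-q)$ (Corollary \ref{cor:negtype}); and (c) the dual polar graph $^2 A_{2D-1}(-q)$ does in fact support a strongly uniform structure with respect to $x$ (Theorem \ref{thm:dualpolar}). The proof is therefore essentially a pigeonholing of these three facts.

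For part (i), I would argue by cases. First, assume $D \ge 4$ and that $\Gamma$ supports a uniform structure with respect to $x$. By the contrapositive of Theorem \ref{thm:zero}, $\Gamma$ must be a near polygon. Since $\Gamma$ is distance-regular, the notion of near polygon adopted in Section \ref{sec:drg} coincides with the notion of regular near polygon used in \cite{BCN} and hence in Corollary \ref{cor:negtype}; that corollary then applies and, because $D \ge 4$, forces $\Gamma \cong {^2 A_{2D-1}(-q)}$. Conversely, if $\Gamma$ is this dual polar graph, Theorem \ref{thm:dualpolar} exhibits an explicit strongly uniform structure on $\Gamma$, which is in particular a uniform structure. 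This simultaneously yields the "if" direction and the "moreover" clause of part (i).

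For part (ii), if $D=3$ and $\Gamma$ supports a uniform structure with respect to $x$, then once more the contrapositive of Theorem \ref{thm:zero} yields that $\Gamma$ is a near polygon, which is precisely the claim.

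I do not anticipate any genuine obstacle, since all the substantive work has been carried out in the preceding three sections. The only point worth flagging is a minor consistency check: that "near polygon" as defined in Section \ref{sec:drg} agrees with "regular near polygon" as used in Corollary \ref{cor:negtype}. This is automatic because $\Gamma$ is assumed distance-regular throughout Notation \ref{not3.2}, so there is nothing further to verify.
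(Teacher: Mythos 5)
Your proposal is correct and follows essentially the same route as the paper, whose proof is simply the observation that the statement follows immediately from Theorem~\ref{thm:zero} (equivalently Theorems~\ref{nonzero} and~\ref{zero}), Corollary~\ref{cor:negtype}, and Theorem~\ref{thm:dualpolar}; your case analysis just spells out this assembly. The terminological check about ``near polygon'' versus ``regular near polygon'' is indeed harmless, since $\Ga$ is distance-regular by Notation~\ref{not3.2}.
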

\begin{proof}
	Immediately from Theorem \ref{nonzero}, Theorem \ref{thm:zero}, Corollary \ref{cor:negtype}, and Theorem \ref{thm:dualpolar}.
\end{proof}


\section{Graphs with classical parameters with $q=1$}\label{sec:q=1}

With reference to Notation \ref{not3.2}, in this section we classify non-bipartite distance-regular graphs with classical parameters with $q=1$ that support a uniform structure. It turns out that distance-regular graphs with classical parameters with $q=1$ are classified.
\begin{theorem}[{\cite[Theorem 6.1.1]{BCN}}]
	Let $\Gamma$ denote a distance-regular graph with classical parameters with $q=1$. Then, $\Gamma$ is one of the following graphs: 
\begin{enumerate}[label=(\roman*)]
		\item Johnson graph $J(n,D)$, $n\geq2D$ (see \cite[Section~9.1]{BCN}),
    	\item Gosset graph (see \cite[Section~3.11]{BCN}),
		\item Hamming graph $H(D,n)$ (see \cite[Section~9.2]{BCN}),
    	\item Halved cube  $\frac{1}{2}H(n,2)$ (see \cite[Section~9.2D]{BCN}),
    	\item Doob graph $D(n,m)$, $n\geq1$, $m\geq0$ (see \cite[Section~9.2B]{BCN}).
\end{enumerate}
\end{theorem}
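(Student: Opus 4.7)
The statement is the Brouwer--Cohen--Neumaier classification \cite[Theorem 6.1.1]{BCN}, so a formal proof in this paper would simply invoke that result. I nevertheless outline the conceptual route one would follow to establish it from scratch.

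First, I would substitute $q=1$ into the classical-parameter formulas, which collapse to $c_i = i\bigl(1+\alpha(i-1)\bigr)$ and $b_i = (D-i)(\beta - \alpha i)$ for $0 \le i \le D$. The intersection array is therefore determined by the triple $(D,\alpha,\beta)$, subject to the feasibility requirements that each $c_i$, $b_i$, and $a_i = k - b_i - c_i$ is a non-negative integer, with $c_i \ge 1$ for $1 \le i \le D$ and $b_i \ge 1$ for $0 \le i \le D-1$. A short divisibility argument on $c_2 = 2(1+\alpha)$ and $c_3 = 3(1+2\alpha)$, combined with non-negativity of $b_i$, forces $\alpha$ to be a non-negative integer.

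Next, I would run a case analysis on $\alpha$. When $\alpha = 0$, one obtains $c_i = i$ and $b_i = (D-i)\beta$, which is the intersection array of the Hamming graph $H(D,\beta+1)$; when additionally $\beta + 1 = 4$, the Doob graphs $D(n,m)$ with $n+2m = D$ realize the same array. When $\alpha = 1$ one gets $c_i = i^2$ and $b_i = (D-i)(\beta-i)$, matching the Johnson graph $J(n,D)$ with $\beta = n-D$. When $\alpha = 2$ the halved cube $\frac{1}{2}H(n,2)$ appears, and for small sporadic triples one recovers the Gosset graph. In each case, one must first show no other value of $\beta$ is admissible, and second that the intersection array uniquely determines the graph up to isomorphism, apart from the Hamming/Doob coincidence.

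The main obstacle is the uniqueness step: given a feasible intersection array, proving that the only distance-regular graphs realizing it are the ones listed. This is a non-trivial piece of algebraic combinatorics and relies on deeper structural theory, such as the characterizations of Hamming, Johnson, and halved-cube graphs by their intersection arrays under mild additional hypotheses, as developed in Chapter~9 of \cite{BCN}. The Doob graphs, which share the array of $H(D,4)$ but are not isomorphic to it, require particular care: one must show that this Hamming-type array forces the graph to be a direct product whose factors are either $K_4$ or the Shrikhande graph. These uniqueness results are the technically hardest ingredient, and the present paper takes them as input by invoking \cite[Theorem 6.1.1]{BCN} directly.
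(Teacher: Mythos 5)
Your proposal matches the paper exactly: the paper states this theorem purely as a citation of \cite[Theorem 6.1.1]{BCN} and offers no proof of its own, which is precisely what you do, with the classification argument left to that reference. Your supplementary sketch of how one would prove it from scratch is reasonable and not required; note only the small slip that the Doob graph $D(n,m)$ has diameter $2n+m$ (so the Hamming-type array of $H(D,4)$ is realized when $2n+m=D$, not $n+2m=D$), and that ruling out half-integer $\alpha$ and pinning down the Gosset case (where $\alpha=4$) is part of the nontrivial work you correctly defer to \cite{BCN}.
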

In what follows we analyze each of the above cases separately. Recall also that all of the above graphs are $Q$-polynomial with respect to the original ordering of its primitive idempotents. We will be using this fact throughout this section. Since some of these graphs are tight, we start our analysis by considering tight graphs first.

\subsection{Tight graphs with classical parameters}
With reference to Notation~\ref{not3.2}, let $\Ga$ denote a tight graph with classical parameters with $q=1$. In this subsection, we show that $\Ga$ does not support a uniform structure.

\begin{theorem}\label{ttight}
With reference to Notation~\ref{not3.2}, let $\Ga$ denote a tight graph with classical parameters with $q=1$. Then, $\Ga$ does not support a uniform structure with respect to $x$.
\end{theorem}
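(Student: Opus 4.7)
The plan is to mirror the strategy used in Sections~\ref{sec3}--\ref{sec:rnp}: assume that $\Gamma$ supports a uniform structure, extract two irreducible $T$-modules that are not $T$-isomorphic but would be forced to coincide as $T_f$-modules, and then contradict the numerical constraint of Proposition~\ref{tilde}.

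By Theorem~\ref{th2.3}, tightness of $\Gamma$ yields, up to $T$-isomorphism, exactly two irreducible $T$-modules $W$ and $W'$ with endpoint $1$, both thin, with local eigenvalues $\tilde\theta_1$ and $\tilde\theta_D$, respectively. Proposition~\ref{prop:diam}(i) gives that they share the common diameter $d=D-2$. Assume, toward a contradiction, that $\Gamma$ supports a uniform structure. By the lemma preceding Proposition~\ref{ortho}, $W$ and $W'$ remain thin irreducible $T_f$-modules, still with endpoint $1$ and diameter $D-2$; Theorem~\ref{oldpaper}(ii) applied to $\Gamma_f$ and $T_f$ then forces $W\cong W'$ as $T_f$-modules. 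By Proposition~\ref{tilde}, this isomorphism implies
\[
\beta_{i+1}\gamma_i \;=\; \beta'_{i+1}\gamma'_i \qquad (0 \le i \le D-3),
\]
where $\beta_j,\gamma_j$ (resp.\ $\beta'_j,\gamma'_j$) describe the actions of $L$ and $R$ on a standard basis of $W$ (resp.\ $W'$).

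It will suffice to contradict the case $i=0$: equivalently, to show $\beta_1\gamma_0 \neq \beta'_1\gamma'_0$, since $\beta_1\gamma_0$ is the sole eigenvalue of $LR$ on the one-dimensional space $\e_1 W$ (and analogously for $W'$). Using that $\Gamma$ is $Q$-polynomial in its original ordering of primitive idempotents, together with Proposition~\ref{prop:dualendpt}(i) (whence $t,t'\in\{1,2\}$), I build standard bases $w_i=\e_{1+i}\tilde w$ and $w'_i=\e_{1+i}\tilde w'$ from primary vectors $\tilde w\in E_tW\setminus\{0\}$ and $\tilde w'\in E_{t'}W'\setminus\{0\}$, following \cite[Theorem~8.1(i)]{cerzo}. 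This expresses $\beta_1\gamma_0$ as an explicit rational function of the local eigenvalue $\eta=\tilde\theta_1$ and the intersection numbers of $\Gamma$, and $\beta'_1\gamma'_0$ as the same rational function evaluated at $\eta=\tilde\theta_D$.

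The main obstacle lies in verifying that this rational function actually takes distinct values at $\tilde\theta_1$ and $\tilde\theta_D$. Combining the tightness constraint $\beta=1+\alpha(D-1)$ with the $q=1$ classification restricts $\Gamma$, in the non-bipartite case, to the Johnson graph $J(2D,D)$ and the halved $2D$-cube $\tfrac{1}{2}H(2D,2)$. For each of these I will compute $\tilde\theta_1$ and $\tilde\theta_D$ explicitly (for example $\tilde\theta_1=-2$ and $\tilde\theta_D=D-2$ for $J(2D,D)$), evaluate the expression for $\beta_1\gamma_0$ at both values, and confirm that they are distinct for all $D\ge 3$, yielding the required contradiction.
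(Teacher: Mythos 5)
Your overall strategy is the same as the paper's: by Theorem~\ref{th2.3} a tight graph has exactly two non-isomorphic thin irreducible $T$-modules $W,W'$ with endpoint $1$, both of diameter $D-2$; a uniform structure would force, via Theorem~\ref{oldpaper}(ii) and Proposition~\ref{tilde}, the equality $\beta_1\gamma_0=\beta'_1\gamma'_0$ of the $LR$-eigenvalues on $\e_1W$ and $\e_1W'$, which one then refutes. Where you diverge is in how that refutation is carried out, and this is where the gaps lie. First, your reduction ``tightness plus the $q=1$ classification leaves only $J(2D,D)$ and $\tfrac12 H(2D,2)$'' is incomplete: the Gosset graph is also a tight non-bipartite graph with classical parameters $q=1$ (it is a Taylor graph; the paper's Corollary~\ref{tttight} lists it alongside the other two), so your case analysis omits a case. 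Second, and more seriously, the decisive inequality $\beta_1\gamma_0\neq\beta'_1\gamma'_0$ is only announced (``I will compute \dots and confirm that they are distinct''), not established; since this inequality is the entire content of the contradiction, the proof is incomplete at its crux. Producing it requires explicit formulas for the action of $L$ and $R$ on a standard basis of a thin endpoint-one module in terms of its local eigenvalue, which you gesture at via \cite{cerzo} but never derive.

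For comparison, the paper avoids the classification entirely and works uniformly in the parameters: tightness gives $\beta=1+\alpha(D-1)$, and \cite[Definition 4.5, Theorem 10.6]{GoTer} yield $\gamma_i=\gamma'_i=c_{i+1}$, $\beta_1=b_2$, and $\beta'_1=b_2\,\frac{(1+\alpha)(1+\alpha(D-2))}{1+\alpha(D-3)}$; the forced equality $\beta_1=\beta'_1$ then gives $\alpha=0$ (so $\beta=1$ and $\Gamma$ is bipartite, excluded) or $\alpha=-2/(D-2)$ (so $c_{D-1}=1-D<0$), a contradiction in both cases. If you insist on the case-by-case route, you must add the Gosset graph and actually exhibit, for each of the three graphs, the two $LR$-eigenvalues at level $1$ (e.g.\ via the Go--Terwilliger formulas evaluated at $\tilde\theta_1=-2$ and $\tilde\theta_D=D-2$ for $J(2D,D)$) and verify they differ; as it stands, only the local eigenvalues have been computed, not the quantities whose inequality the argument needs.
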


\begin{proof}
Since $\Ga$ is tight, we have that $a_D=0$ by Theorem \ref{tight}(iii), which implies that $\beta= 1+\alpha(D-1)$. Furthermore, the eigenvalue $\theta_i$ of $\Ga$ is equal to $(D-i)(\beta-\alpha i)-i$ $(0 \le i \le D)$, see \cite[Corollary 8.4.2]{BCN}. By Theorem \ref{th2.3}, $\Gamma$ has, up to isomorphism, exactly two irreducible $T$-modules $W$ and $W'$ with endpoint $1$. Moreover, both of them have diameter $D-2$. One of this two modules, say $W$, has local eigenvalue $\tilde{\theta_1}$, while $W'$ has local eigenvalue $\tilde{\theta_D}$. Let $\{w_i\}^{D-2}_{i=0}$ and $\{w'_i\}^{D-2}_{i=0}$ be the standard bases for $W$ and $W'$, respectively. Let $\beta_i$, $\beta'_i$, $\gamma_i$, $\gamma'_i$ be scalars as in Proposition~\ref{tilde}. By \cite[Theorem 10.6]{GoTer}, we have that $\gamma_i = \gamma'_i= c_{i+1}$. Using \cite[Definition 4.5, Theorem 10.6]{GoTer}, we can derive that 
\begin{equation}\label{beta}
\beta_1=b_2 \hbox{ \, and \, } \beta'_1=b_2\frac{(1+\alpha)(1+\alpha(D-2))}{1+\alpha(D-3)}.
\end{equation}
Assume that $W$ and $W'$ are isomorphic as $T_f$-modules. Then, $\beta_1/\beta'_1=\gamma'_0/\gamma_0$ by Proposition \ref{tilde}. As $\gamma_0=\gamma'_0$ by the comments above, this implies $\beta'_1=\beta_1$. It follows from \eqref{beta} that $\alpha=0$ or $\alpha=-2/(D-2)$. If $\alpha=0$, then $\beta=1$, and so $\Ga$ is bipartite, a contradiction. If $\alpha=-2/(D-2)$, then $c_{D-1}=1-D<0$, a contradiction. This shows that $W$ and $W'$ cannot be isomorphic as $T_f$-modules, and so $\Ga$ does not support a uniform structure by Theorem \ref{oldpaper}(ii).
\end{proof}

\begin{corollary}\label{tttight}
With reference to Notation~\ref{not3.2}, let $\Ga$ be one of the following graphs: Johnson graph $J(2D,D)$, Gosset graph, Halved cube $\frac{1}{2}H(n,2)$ with $n$ even. Then, $\Ga$ does not support a uniform structure with respect to $x$.
\end{corollary}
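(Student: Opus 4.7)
The plan is to reduce Corollary~\ref{tttight} to Theorem~\ref{ttight}. Since that theorem already establishes that no tight distance-regular graph with classical parameters with $q=1$ supports a uniform structure, it suffices to verify that each of the three listed graphs is tight with classical parameters with $q=1$.

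First I would record the classical parameters of the three families. The Johnson graph $J(2D,D)$ has classical parameters $(D,1,1,D)$ (see \cite[Section 9.1]{BCN}). The Gosset graph has diameter $3$ and intersection array $\{27,10,1;1,10,27\}$; matching these intersection numbers against the formulas $c_i=i(1+\alpha(i-1))$ and $b_i=(D-i)(\beta-\alpha i)$ from Section~\ref{sec:drg} (with $q=1$) gives classical parameters $(3,1,4,9)$. The halved cube $\frac{1}{2}H(n,2)$ with $n=2D$ has $c_i=i(2i-1)$ and $b_i=(D-i)(2D-2i-1)$, yielding classical parameters $(D,1,2,2D-1)$.

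Next I would verify tightness in each case. Recall from the proof of Theorem~\ref{ttight} that, for a distance-regular graph with classical parameters, the condition $a_D=0$ is equivalent to $\beta=1+\alpha(D-1)$. A direct substitution confirms this equality for every family under consideration: $1+1\cdot(D-1)=D=\beta$ for $J(2D,D)$; $1+4\cdot 2=9=\beta$ for the Gosset graph; and $1+2(D-1)=2D-1=\beta$ for the halved cube with $n=2D$. In each family every irreducible $T$-module with endpoint~$1$ is thin; this is a well-documented property of Johnson graphs, halved cubes, and the Gosset graph, and indeed these graphs are listed among the motivating examples in the theory of tight distance-regular graphs. By Theorem~\ref{tight}(iii), each of the three graphs is therefore tight.

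Having established that each graph is a tight distance-regular graph with classical parameters with $q=1$, Theorem~\ref{ttight} immediately yields that none of them supports a uniform structure with respect to $x$, proving the corollary. The only non-routine ingredient is the thinness half of Theorem~\ref{tight}(iii); rather than re-derive it I would simply cite the known results for these standard classical families.
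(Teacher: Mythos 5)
Your proposal is correct and follows essentially the same route as the paper: both reduce the corollary to Theorem~\ref{ttight} by observing that the three listed graphs are tight distance-regular graphs with classical parameters with $q=1$. The only difference is cosmetic: the paper establishes tightness by citing \cite[Section 13]{JKT} (noting the Gosset graph is a Taylor graph), whereas you route it through Theorem~\ref{tight}(iii) by computing $a_D=0$ from the classical parameters and citing the known thinness of the endpoint-one modules, which amounts to the same external input.
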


\begin{proof}
Note that $\Ga$ is tight, see \cite[Section 13]{JKT} (observe that the Gosset graph belongs to the family of Taylor graphs). The result follows from Theorem \ref{ttight}.
\end{proof}

\subsection{Johnson graphs}

With reference to Notation~\ref{not3.2}, let $\Gamma=J(n,D)$ with $n\geq2D$. In this subsection, we show that $\Gamma$ does not support a uniform structure with respect to $x$. Observe that by \cite[Example 6.1]{Terpart3} every irreducible $T$-module is thin. Let $W$ denote an irreducible $T$-module with endpoint $r$, dual endpoint $t$ and diameter $d$. By  \cite[Lemma 4.5]{GZH}, the isomorphism class of $W$ is determined by $r,t$ and $d$. 

\begin{theorem}\label{johnson}
With reference to Notation~\ref{not3.2}, let $\Gamma=J(n,D)$ with $n\geq2D$. Then, $\Gamma$ does not support a uniform structure.
\end{theorem}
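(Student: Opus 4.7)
The plan is to follow the strategy of Theorem~\ref{ttight}: exhibit two thin irreducible $T$-modules with endpoint $1$ and a common diameter, then show they fail the $T_f$-isomorphism criterion of Proposition~\ref{tilde}, so that Theorem~\ref{oldpaper}(ii) rules out a uniform structure on $\Ga_f$. Since Corollary~\ref{tttight} already covers $J(2D, D)$, we may assume $n > 2D$. In this range $a_D \ne 0$, so $\Ga$ is not tight. Recall that every irreducible $T$-module of $\Ga$ is thin by \cite[Example~6.1]{Terpart3}, and by \cite[Lemma~4.5]{GZH} its isomorphism class is determined by the triple (endpoint, dual endpoint, diameter).

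First, I would produce two non-$T$-isomorphic thin irreducible $T$-modules $W$ and $W'$ with endpoint $1$ and common diameter $D-2$, corresponding to the two distinct local eigenvalues $\tilde{\theta}_1$ and $\tilde{\theta}_D$. Their existence follows from the standard representation theory of the Johnson scheme (e.g.\ via \cite[Theorem~10.6]{GoTer} applied to the local graph $K_D \,\square\, K_{n-D}$), using that $a_1 \ne 0$ and $a_D \ne 0$; Proposition~\ref{prop:diam}(i) then forces both modules to have diameter $D-2$, and the distinctness of their local eigenvalues, together with \cite[Lemma~4.5]{GZH}, shows that they are non-isomorphic as $T$-modules.

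Next, I would use \cite[Theorem~10.6]{GoTer} to compute, for each of $W$ and $W'$, the scalars $\beta_i, \gamma_i$ and $\beta'_i, \gamma'_i$ describing the actions of $L$ and $R$ on the respective standard bases. The classical parameters of $J(n, D)$ are $(D, 1, 1, n-D)$, so with $\alpha = 1$ and $\beta = n-D$ the formulas simplify; in particular $\gamma_0 = \gamma'_0 = c_1 = 1$. A direct computation — analogous to, but no longer benefitting from, the tight identity $\beta = 1 + \alpha(D-1)$ exploited in Theorem~\ref{ttight} — yields explicit expressions for $\beta_1$ and $\beta'_1$ that differ for every $D \ge 3$.

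Finally, Proposition~\ref{tilde} would force $\beta_1/\beta'_1 = \gamma'_0/\gamma_0 = 1$ were $W$ and $W'$ to be $T_f$-isomorphic. Since $\beta_1 \ne \beta'_1$, this fails, so $W \not\cong_{T_f} W'$. As $W$ and $W'$ are thin irreducible $T_f$-modules with the same endpoint and diameter, Theorem~\ref{oldpaper}(ii) precludes a uniform structure on $\Ga_f$, i.e.~$\Ga$ does not support a uniform structure with respect to $x$. The main obstacle is the clean computation of $\beta_1$ and $\beta'_1$ outside the tight regime, where the boundary-local-eigenvalue formulas of \cite{GoTer} no longer collapse as neatly; once the $\alpha = 1$ specialization is carried out, however, the mismatch $\beta_1 \ne \beta'_1$ is purely numerical and immediate.
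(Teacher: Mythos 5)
Your proposal is correct and takes essentially the same route as the paper: reduce $n=2D$ to Corollary~\ref{tttight}, take the two thin irreducible $T$-modules with endpoint $1$ and diameter $D-2$, show the ratio condition of Proposition~\ref{tilde} fails, and conclude via Theorem~\ref{oldpaper}(ii). The only difference is bookkeeping: the paper identifies the two modules by their dual endpoints $1$ and $2$ (using \cite[Theorem 11.12]{MacMik}) and reads the scalars off \cite[Example 6.1]{Terpart3}, getting $\beta_1/\beta'_1=(n-D-1)/(n-D-2)\neq 1/2=\gamma'_0/\gamma_0$, which is the same basis-invariant mismatch (equality of the $LR$-eigenvalues would force $n=D$) that your computation via \cite[Theorem 10.6]{GoTer} with $\alpha=1$, $\beta=n-D$ yields.
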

\begin{proof}
If $n=2D$, then the result follows from Corollary \ref{tttight}. Thus, assume that $n>2D$.  From \cite[Theorem 11.12]{MacMik} we have that $\Gamma$ has, up to isomorphism, exactly 3 irreducible $T$-modules with endpoint 1. Since the isomorphism classes of these modules are determined by their respective dual endpoint and diameter, it follows from Propositions \ref{prop:diam} and \ref{prop:dualendpt} that two of these modules have diameter $D-2$ (and dual endpoints 1 and 2, respectively), while the third module has diameter $D-1$ (and dual endpoint 1). 

Let $W$ and $W'$ denote the irreducible $T$-modules with endpoint $1$, diameter $d=D-2$ and dual endpoints $1$ and $2$, respectively. Let $\{w_i\}_{i=0}^d$ and $\{w'_i\}_{i=0}^d$ denote the standard bases for $W$ and $W'$, respectively, and let $\beta_i$, $\beta'_i$, $\gamma_i$, $\gamma'_i$ be scalars as in Proposition~\ref{tilde}. It follows from ~\cite[Example 6.1]{Terpart3} that $\beta_i=b_{i-1}(W)=(D-i-1)(n-D-i)$, $\gamma_i=c_{i+1}(W)=(i+1)(i+2)$, $\beta'_i=b_{i-1}(W')=(D-i-1)(n-D-i-1)$, and $\gamma'_i=c_{i+1}(W')=(i+1)^2$. Observe that $\beta_1/\beta'_1=(n-D-1)/(n-D-2)$ and $\gamma'_0/\gamma_0=1/2$. If $\beta_1/\beta'_1=\gamma'_0/\gamma_0$, then we would have that $n=D$, contradicting our assumption that $n>2D$. Proposition~\ref{tilde} yields that $W$ and $W'$ are not isomorphic as $T_f$-modules, and so $J(n,D)$ does not support a uniform structure by Theorem~\ref{oldpaper}(ii).
\end{proof}

\subsection{Hamming graphs}

Recall that the Hamming graph $H(D,n)$ is bipartite if and only if $n=2$. Hence, we will assume that $n \ge 3$. 

\begin{theorem}\label{thm:Hamm}
With reference to Notation~\ref{not3.2}, let $\Ga$ denote the Hamming graph $H(D,n)$ with $n\geq3$. Then, 
\begin{equation}\label{eq9}
-\frac{1}{2}RL^2+LRL-\frac{1}{2}L^2R=(n-1)L
\end{equation}
is satisfied on $\e_iV$ for $1 \le i \le D$.
In particular, $\Ga$ supports a strongly uniform structure with respect to $x$, where $e_i^{-}=-\frac{1}{2}\, (2\leq i\leq D)$, $e_i^{+}=-\frac{1}{2}\, (1\leq i\leq D-1)$, and $f_i=n-1\,(1\leq i\leq D)$.  
\end{theorem}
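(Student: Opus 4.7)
The plan is to simplify the desired identity through a commutator observation and then verify it directly on standard basis vectors. Note that
$$-\tfrac{1}{2}RL^2 + LRL - \tfrac{1}{2}L^2R \;=\; -\tfrac{1}{2}\bigl(L^2R - 2LRL + RL^2\bigr) \;=\; -\tfrac{1}{2}[L,[L,R]],$$
so \eqref{eq9} is equivalent to the Dolan--Grady--type relation $[L,[L,R]] = -2(n-1)L$ on $\e_iV$. I would prove this by combinatorial analysis, parametrizing each $y \in \Gamma_i(x)$ by its support $S(y) = \{k : y_k \neq x_k\}$, so $|S(y)|=i$, and tracking how $L$, $R$, and $F$ act on $\widehat{y}$.

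First, I expand $LR\widehat{y}$ and $RL\widehat{y}$ and classify contributions by how the two successive one-step moves affect the coordinates. A direct count shows that the off-diagonal ``one-swap'' terms (targeting vertices $z$ at distance $i$ with $|S(z)\triangle S(y)|=2$) occur with identical coefficients in both products and hence cancel in the commutator, leaving
$$[L,R]\bigl|_{\e_iV} \;=\; (b_i - c_i)\,I \;-\; F\bigl|_{\e_iV},$$
where $F|_{\e_iV}$ is the adjacency within the $i$-th subconstituent. Iterating,
$$[L,[L,R]]\bigl|_{\e_iV} \;=\; \bigl((b_i-c_i) - (b_{i-1}-c_{i-1})\bigr)\,L \;+\; (FL - LF),$$
and plugging in $b_i = (D-i)(n-1)$, $c_i = i$ for $H(D,n)$ gives the scalar coefficient $-n$.

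The key auxiliary identity is then $LF - FL = (n-2)L$ on $\e_iV$, which I would again verify by bookkeeping: expanding $LF\widehat{y}$ and $FL\widehat{y}$, the contributions whose ``same-level'' move and ``descent'' affect different coordinates of $S(y)$ match term-by-term between the two sides and cancel, while contributions in $LF\widehat{y}$ where both moves act on the same coordinate $k \in S(y)$ produce $n-2$ copies of $\widehat{y^{(k)}}$ each, summing to $(n-2)L\widehat{y}$. Combining yields $[L,[L,R]] = -nL - (n-2)L = -2(n-1)L$, establishing \eqref{eq9} on $\e_iV$ for every $1 \le i \le D$.

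Finally, for the strongly uniform structure, with $e_i^\pm = -\tfrac{1}{2}$ and $f_i = n-1$, conditions (i)--(ii) of Definition~\ref{3diag} are immediate since $-\tfrac{1}{2} \neq 0$. For condition~(iii), each $m \times m$ principal submatrix of $U$ is tridiagonal with $1$'s on the diagonal and $-\tfrac{1}{2}$'s on the sub/super-diagonals; its determinant satisfies the recurrence $D_m = D_{m-1} - \tfrac{1}{4}D_{m-2}$ with $D_1 = 1$, $D_2 = 3/4$, solving to $D_m = (m+1)/2^m > 0$. Hence every required principal submatrix is non-singular and $(U,f)$ is a strongly uniform structure. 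The main obstacle is the careful tracking of multiplicities in the two cancellation arguments: the same target vertex can arise from multiple choices of intermediate state, and one must verify that the two sides of each commutator agree exactly.
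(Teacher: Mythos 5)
Your proof is correct, but it takes a genuinely different route from the paper's. The paper invokes Terwilliger's classification of the (all thin) irreducible $T$-modules of $H(D,n)$, reads off the scalars $\beta_i=(n-1)(d-i+1)$ and $\gamma_i=i+1$ for the action of $L$ and $R$ on a standard basis of each module, verifies \eqref{eq9} on each basis vector, and then sums over an orthogonal decomposition of $V$ into irreducibles. You instead rewrite the left side of \eqref{eq9} as $-\tfrac{1}{2}[L,[L,R]]$ and verify the resulting relation $[L,[L,R]]=-2(n-1)L$ directly from the combinatorics of Hamming tuples; I checked your two key intermediate identities, $[L,R]\vert_{\e_iV}=(b_i-c_i)I-F\vert_{\e_iV}$ and $LF-FL=(n-2)L$ on $\e_iV$, and both are correct for $H(D,n)$ (for adjacent same-level pairs the unique common neighbour below accounts for the $-F$ term, the ``one-swap'' distance-two pairs contribute equally to $LR$ and $RL$, and the $n-2$ same-coordinate detours give the $(n-2)L$ term), as is the resulting scalar $(b_i-c_i)-(b_{i-1}-c_{i-1})=-n$ and the combination $-n-(n-2)=-2(n-1)$. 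Your approach is more elementary and self-contained — it needs no $T$-module theory and not even Proposition~\ref{ortho} — at the cost of some coordinate bookkeeping that must be carried out carefully (as you acknowledge); the paper's approach is shorter given the cited module data and matches the uniform template used for the other families in Section~\ref{sec:q=1}. The determinant computation for the strongly uniform claim is the same in both arguments, with your $D_m=(m+1)/2^m$ agreeing with the paper's $\det(P)=(t-s+2)/2^{t-s+1}$.
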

\begin{proof}
First note that by \cite[Example 6.1]{Terpart3}, every irreducible $T$-module is thin. Let $W$ denote an irreducible $T$-module with endpoint $r$ and diameter $d$. Let us check that \eqref{eq9} holds on $\e_{r+i}W$ ($0\le i \le d$).  Let $\{w_i\}^d_{i=0}$ denote the standard basis for $W$ (with $w_i\in \e_{r+i}W$). Let $\beta_i$ $(1\leq i \leq d)$ and $\gamma_i$ $(0\leq i \leq d-1)$ be scalars such that $Lw_i=\beta_i w_{i-1}$ and $Rw_i=\gamma_i w_{i+1}$. From \cite[Example 6.1]{Terpart3}, we have that $\beta_i=b_{i-1}(W)=(n-1)(d-i+1)$ and $\gamma_i=c_{i+1}(W)=i+1$. Observe that, if we apply  \eqref{eq9}  to $w_0$, then both sides of \eqref{eq9} are equal to zero by \eqref{eq:LRaction}.
For $1\leq i\leq d$, a straightforward computation shows that the left-hand side and the right-hand side of \eqref{eq9} agree on $w_i$. As $V$ is an orthogonal direct sum of irreducible $T$-modules, this shows that \eqref{eq9} is satisfied on $\e_i V$ for $1 \le i \le D$.

Recall the scalars $e_i^-$, $e_i^+$, and let  $U$ be the corresponding parameter matrix from Definition~\ref{3diag}. Clearly $e_i^{-}\neq 0 \; (2 \le i \le D)$ and $e_i^{+}\neq 0 \; (1 \le i \le D-1)$. For $1\leq s\leq t\leq D$, let $P=(e_{ij})_{s\leq i,j \leq t}$ denote the principal submatrix of $U$. A simple linear recurrence argument shows that $\det(P)=\frac{t-s+2}{2^{t-s+1}}$, implying that $P$ is non-singular. In virtue of Proposition~\ref{ortho}, $\Ga$ supports a strongly uniform structure with respect to $x$.   
\end{proof}

\subsection{Halved Cubes}

With reference to Notation~\ref{not3.2}, let $\Ga$ denote the Halved cube $\frac{1}{2}H(n,2)$. Recall that by Corollary~\ref{tttight} $\Ga$ does not support a uniform structure with respect to $x$ if $n$ is even. In this subsection, we show that $\Ga$ does support a (strongly) uniform structure with respect to $x$ if $n$ is odd. Observe that the diameter of $\Ga$ is $\lfloor \frac{n}{2}\rfloor$, see  \cite[p. 264]{BCN}.

\begin{theorem}\label{thm:H-odd}
	With reference to Notation~\ref{not3.2}, let $\Ga$ denote the Halved cube $\frac{1}{2}H(n,2)$ with $n$ odd, $n\geq7$. Recall that $D=\lfloor \frac{n}{2}\rfloor=(n-1)/2$. Then, 
	\begin{equation}\label{eq10}
		e_i^{-}RL^2+LRL+e_{i}^{+}L^2R=f_iL
	\end{equation}
	is satisfied on $\e_iV$ for $1 \le i \le D$, where 
	\begin{align}\label{halved}
		&e_i^{-}=\frac{4i-1-2D}{6-8i+4D}\,\, (2\leq i \leq D)\qquad e_i^{+}=\frac{4i-5-2D}{6-8i+4D}\,\, (1\leq i \leq D-1)\\&f_i=-(4i-5)(4i-1)+(16i-12)D-4D^2\,\,\, (1\leq i \leq D).
	\end{align}
	In particular, $\Ga$ supports a strongly uniform structure with respect to $x$. 
\end{theorem}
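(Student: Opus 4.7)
The plan is to follow the strategy of Theorem~\ref{thm:Hamm}: apply Proposition~\ref{ortho} to reduce the verification of \eqref{eq10} to a check on each irreducible $T$-module, then confirm that $U$ is a valid parameter matrix. First I would invoke the fact that every irreducible $T$-module of $\frac{1}{2}H(n,2)$ is thin, which is known from the structure of the halved cube (see, for example, \cite{Terpart3}). Fix such an irreducible $T$-module $W$ with endpoint $r$ and diameter $d$, and let $\{w_j\}_{j=0}^{d}$ denote a standard basis with $w_j\in\e_{r+j}W$. Define scalars $\beta_j, \gamma_j$ by $Lw_j=\beta_j w_{j-1}$ and $Rw_j=\gamma_j w_{j+1}$, with the conventions $\gamma_{-1}=\gamma_d=0$.

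Applying both sides of \eqref{eq10} to $w_j$ with $i=r+j$ and using thinness of $W$, the identity reduces, after cancelling $\beta_j$ when $j\ge 1$, to the scalar relation
$$
e_{r+j}^{-}\beta_{j-1}\gamma_{j-2}+\beta_j\gamma_{j-1}+e_{r+j}^{+}\gamma_j\beta_{j+1}=f_{r+j},
$$
for $1\le j\le d$; the case $j=0$ is trivial since $Lw_0=0$. Next I would substitute the known closed-form expressions for $\beta_j=\beta_j(W)$ and $\gamma_j=\gamma_j(W)$ into this scalar relation for each isomorphism class of thin irreducible $T$-module of $\frac{1}{2}H(2D+1,2)$, and verify that it becomes a polynomial identity in $r,d,j,D$ which can be checked by direct computation, using the intersection numbers $c_i=i(2i-1)$ and $b_i=(D-i)(2D+1-2i)$ of $\Ga$. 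A convenient sanity check is the trivial module ($r=0$, $\beta_j=b_{j-1}$, $\gamma_j=c_{j+1}$), where one directly verifies the identity level by level.

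To conclude, I would show that $U$ is a genuine parameter matrix. The integers $4i-1-2D$ and $4i-5-2D$ are odd, while $6-8i+4D$ is even; in particular, none of them vanish for $1\le i\le D$, so $e_i^-\ne 0$ for $2\le i\le D$ and $e_i^+\ne 0$ for $1\le i\le D-1$. Nonsingularity of every principal submatrix of $U$ would follow from a linear recurrence computation of its determinant, analogous to the arguments carried out in Theorems~\ref{thm:dualpolar} and~\ref{thm:Hamm}. Combining these with Proposition~\ref{ortho} yields the desired strongly uniform structure.

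The main obstacle is the verification step on every isomorphism class of irreducible $T$-module. Since $\frac{1}{2}H(2D+1,2)$ is not tight (Corollary~\ref{tttight} only covers the even case), Theorem~\ref{th2.3} does not apply, and we expect multiple irreducible $T$-modules with endpoint $1$, together with modules of higher endpoint. One must produce explicit formulas for $\beta_j(W)$ and $\gamma_j(W)$ valid across all these classes and then check the reduced identity using a single $i$-dependent choice of $e_i^{\pm}, f_i$. Unlike the Hamming case, in which the coefficients are the constants $-\tfrac{1}{2}$, the nontrivial $i$-dependence here reflects this richer module structure, and producing a uniform verification is the genuinely technical heart of the argument.
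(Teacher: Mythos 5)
Your outline coincides with the paper's proof: reduce via Proposition~\ref{ortho} to a check on each thin irreducible $T$-module, rewrite \eqref{eq10} on the standard basis as the scalar identity
\begin{equation*}
e_{r+j}^{-}\,\beta_{j-1}\gamma_{j-2}+\beta_j\gamma_{j-1}+e_{r+j}^{+}\,\gamma_j\beta_{j+1}=f_{r+j}\qquad(1\le j\le d),
\end{equation*}
and then verify that $U$ is a genuine parameter matrix. The step you leave open --- producing formulas for $\beta_j(W)$ and $\gamma_j(W)$ valid across all isomorphism classes and checking the identity uniformly --- is a real gap in the writeup, but it is filled by the very reference you cite: by \cite[Example 6.1(16)]{Terpart3} every irreducible $T$-module of $\tfrac{1}{2}H(2D+1,2)$ is thin and is determined by its diameter $d$ alone, with endpoint $r=(D-d-e)/2$ where $e=0$ or $-1$ according to the parity of $D-d$, and with $\beta_j=(d-j+1)(2d-2j+3)$, $\gamma_j=(j+1)(2j+1)$ when $D-d$ is even, versus $\beta_j=(d-j+1)(2d-2j+1)$, $\gamma_j=(j+1)(2j+3)$ when $D-d$ is odd. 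So the ``uniform verification'' you worry about collapses to two one-parameter families of polynomial identities in $j,d,D$, which one checks directly (this is what the paper calls a ``straightforward but somehow tedious computation''); your sanity check on the trivial module is the case $d=D$, $D-d$ even. Two smaller points: an even integer can vanish, so to see $6-8i+4D\neq0$ you need the sharper observation that it is $\equiv 2\pmod 4$; and the determinant argument is not merely ``analogous'' to the Hamming case, since here $e_i^{\pm}$ depend on $i$ --- the paper establishes the two-term recurrence $\det(U_{s,t})=\det(U_{s+1,t})-e_s^{+}e_{s+1}^{-}\det(U_{s+2,t})$ and the explicit closed form \eqref{halved:det}, whose factors must each be checked to be nonzero.
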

\begin{proof}
	First note that by \cite[Example 6.1 (16)]{Terpart3}, every irreducible $T$-module is thin. Let $W$ denote an irreducible $T$-module with endpoint $r$ and diameter $d$. We have that $r=(D-d-e)/2$ where $e=0$ if $D-d$ is even, and $e=-1$ if $D-d$ is odd, see \cite[Example 6.1 (16)]{Terpart3}. Let us check that \eqref{eq10} holds on $\e_{r+i}W$ ($0\le i \le d$).  Let $\{w_i\}^d_{i=0}$ denote the standard basis for $W$ (with $w_i\in \e_{r+i}W$). Let $\beta_i$ $(1\leq i \leq d)$ and $\gamma_i$ $(0\leq i \leq d-1)$ be scalars such that $Lw_i=\beta_i w_{i-1}$ and $Rw_i=\gamma_i w_{i+1}$. From \cite[Example 6.1 (16)]{Terpart3}, we have that 
\begin{align*}  
\beta_i&=b_{i-1}(W)=\begin{cases}
(d-i+1)(2d-2i+3) \qquad \hbox{if $D-d$ is even},\\
(d-i+1)(2d-2i+1) \qquad \hbox{if $D-d$ is odd},
\end{cases} \\[.2in] 
\gamma_i&=c_{i+1}(W) =\begin{cases}
(i+1)(2i+1) \qquad \hbox{if $D-d$ is even},\\
(i+1)(2i+3) \qquad \hbox{if $D-d$ is odd}.
\end{cases}
\end{align*}
Observe that, if we apply  \eqref{eq10}  to $w_0$, then both sides of \eqref{eq10} are equal to zero by \eqref{eq:LRaction}.
	For $1\leq i\leq d$, a straightforward but somehow tedious computation shows that the left-hand side  and the right-hand side of \eqref{eq10} both agree on $w_i$ (note that $w_i \in \e_{r+i}V$, and so when \eqref{eq10} is applied to $w_i$, the subscripts in \eqref{eq10} must be equal to $r+i$). As $V$ is an orthogonal direct sum of irreducible $T$-modules, this shows that \eqref{eq10} is satisfied on $\e_i V$ for $1 \le i \le D$.
	
	Recall the scalars $e_i^-$ and $e_i^+$ from \eqref{halved}, and let $U$ denote the corresponding parameter matrix from Definition~\ref{3diag}. Let us first argue that the denominators in \eqref{halved} are non-zero. Indeed, as $6$ is not divisible by $4$, we have that $6-8i+4D$ is non-zero for $1\leq i \leq D$. Note also that $4i-1-2D$ and $4i-5-2D$ are odd integers for $1\leq i \leq D$, implying that  $e_i^{-}\neq 0 \; (2 \le i \le D)$ and $e_i^{+}\neq 0 \; (1 \le i \le D-1)$. For $1\leq s\leq t\leq D$, let $U_{s,t}=(e_{ij})_{s\leq i,j \leq t}$ denote the principal submatrix of $U$. It is easy to see that if $s=t$ then $\det(U_{s,t})=1$. Similarly, if $t=s+1$, then 
$$
  \det(U_{s,t})=1-e_s^+ e_{s+1}^- = \frac{3(2D-2t-2s+3)(2D-4t+5)}{4(2D-4t+3)(2D-4t+7)}.
$$
If $t \ge s+2$, expanding $\det(U_{s,t})$ by the first row and then by the first column, we get that
$$
  \det(U_{s,t}) = \det(U_{s+1,t}) - e_s^+ e_{s+1}^- \det(U_{s+2,t}).
$$
A simple induction argument shows that 
\begin{equation} \label{halved:det}
\det(U_{s,t})=\frac{(t-s+2)(2D-2t-2s+3)\displaystyle{\prod^{t-s-1}_{i=0}(2D-4t+5+4i)}}{2^{t-s+1}\displaystyle{\prod^{t-s}_{i=0}(2D-4t+3+4i)}}.
\end{equation}  
Observe that all factors in the numerator of \eqref{halved:det} are nonzero, implying that $U_{s,t}$ is non-singular. In virtue of Proposition~\ref{ortho}, $\Ga$ supports a strongly uniform structure with respect to $x$.   
\end{proof}

\subsection{Doob graphs}

Recall that the Doob graph $D(n,m)$, $n\geq1$, $m\geq0$, is a Cartesian product of $n$ copies of the Shrikhande graph and $m$ copies of the complete graph $K_4$, see \cite[p. 262]{BCN}. With reference to Notation~\ref{not3.2}, let $\Ga$ denote the Doob graph $D(n,m)$. In this subsection, we show that $\Ga$ supports a strongly uniform structure with respect to $x$. Observe that the diameter of $\Ga$ is $2n+m$, see  \cite[p. 262]{BCN}.

\begin{theorem}\label{thm:Doob}
With reference to Notation~\ref{not3.2}, let $\Ga$ denote the Doob graph $D(n,m)$ with $n\geq1$, $m\geq0$ and $D=2n+m\geq3$. Then, 
\begin{equation}\label{eq11}
-\frac{1}{2}RL^2+LRL-\frac{1}{2}L^2R=3L
\end{equation}
is satisfied on $\e_iV$ for $1 \le i \le D$.
In particular, $\Ga$ supports a strongly uniform structure with respect to $x$, where $e_i^{-}=-\frac{1}{2}\, (2\leq i\leq D)$, $e_i^{+}=-\frac{1}{2}\, (1\leq i\leq D-1)$, and $f_i=3\,(1\leq i\leq D)$.  
\end{theorem}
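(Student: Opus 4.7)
The plan is to mirror the proof of Theorem~\ref{thm:Hamm} almost verbatim. This is natural because the Doob graph $D(n,m)$ has the same intersection array as the Hamming graph $H(D,4)$ with $D=2n+m$, and for a thin irreducible $T$-module the scalars that govern the action of $L$ and $R$ on a standard basis depend only on the intersection array together with the endpoint and diameter of the module. Once I have these scalars, verifying \eqref{eq11} on a basis and then invoking Proposition~\ref{ortho} will finish the identity; the strongly uniform property then requires a short determinant computation identical to the one in Theorem~\ref{thm:Hamm}.

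The concrete steps I would carry out are as follows. First, I would argue that every irreducible $T$-module of $\Ga$ is thin: the Doob graph is the Cartesian product of $n$ copies of the Shrikhande graph and $m$ copies of $K_4$, each factor has a thin Terwilliger algebra with respect to any base vertex, and thinness is preserved by Cartesian products since an irreducible $T$-module of the box product decomposes as the tensor product of irreducible $T$-modules of the factors; the same fact is listed in \cite[Example 6.1]{Terpart3}. Second, for a thin irreducible $T$-module $W$ with endpoint $r$ and diameter $d$, I take a standard basis $\{w_i\}_{i=0}^d$ with $w_i\in \e_{r+i}W$ and scalars $\beta_i,\gamma_i$ given by $Lw_i=\beta_iw_{i-1}$ and $Rw_i=\gamma_iw_{i+1}$; the tensor-product description (or again \cite[Example 6.1]{Terpart3}) yields $\beta_i=3(d-i+1)$ and $\gamma_i=i+1$, exactly as in the $H(D,4)$ case. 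Third, I verify \eqref{eq11} on $w_i$. For $i=0$ both sides vanish by \eqref{eq:LRaction}. For $1\leq i\leq d$, a direct expansion gives the coefficient of $w_{i-1}$ on the left-hand side as
\[
\beta_i\left(-\tfrac12\,\beta_{i-1}\gamma_{i-2}+\beta_i\gamma_{i-1}-\tfrac12\,\gamma_i\beta_{i+1}\right),
\]
and substituting $\beta_j=3(d-j+1)$, $\gamma_j=j+1$ reduces the parenthesis to $3$, matching the right-hand side $3Lw_i=3\beta_iw_{i-1}$. Since $V$ is an orthogonal direct sum of irreducible $T$-modules, Proposition~\ref{ortho} then yields \eqref{eq11} on $\e_iV$ for $1\leq i\leq D$.

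For the strongly uniform structure claim, the scalars $e_i^{\pm}=-\tfrac12$ are clearly nonzero; it only remains to check that every principal submatrix of the tridiagonal parameter matrix $U$ (with $1$'s on the diagonal and $-\tfrac12$'s on the off-diagonals) is nonsingular. Expanding along the first row gives $\det U_{s,t}=\det U_{s+1,t}-\tfrac14\det U_{s+2,t}$, and a short induction on $t-s$ with initial values $\det U_{s,s}=1$ and $\det U_{s,s+1}=\tfrac34$ produces $\det U_{s,t}=(t-s+2)/2^{t-s+1}\neq 0$, so Proposition~\ref{ortho} yields the desired strongly uniform structure. The main obstacle in the plan is the first step: justifying thinness and obtaining the closed-form $\beta_i,\gamma_i$ for the Doob graph, since unlike $H(D,4)$ the Doob graph is not distance-transitive. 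This is where the Cartesian-product decomposition of $\Ga$ (and the known thin structure of the Shrikhande graph and $K_4$) does the real work; once thinness and the scalars are in hand, the rest is a routine calculation parallel to the Hamming case.
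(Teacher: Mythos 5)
Your plan founders at the first step: it is not true that every irreducible $T$-module of the Doob graph is thin, and this is precisely where the Doob graph differs from $H(D,4)$ despite having the same intersection array. The Terwilliger algebra of a Cartesian product is only a proper subalgebra of the tensor product of the factors' Terwilliger algebras (its dual idempotents are $\e_i=\sum_{j+k=i}\e_j\otimes\e_k$), so even when $W_1\otimes W_2$ is a $T$-module for the product, its $i$-th subconstituent collects all layers $\e_jW_1\otimes\e_kW_2$ with $j+k=i$ and is typically of dimension greater than one; thinness is not inherited. Moreover, \cite[Example 6.1]{Terpart3} does not list the Doob graphs. In fact, by Tanabe's classification \cite{terrible} (which is what the paper uses), an irreducible $T$-module $W$ of $D(n,m)$ with endpoint $r$ has a basis $\{w_{\ell,j}\mid 0\le\ell\le\delta,\;0\le j\le p\}$ with $w_{\ell,j}\in\e_{r+\ell+j}W$, and whenever $\delta\ge1$ and $p\ge1$ the vectors $w_{1,0},w_{0,1}$ lie in the same subconstituent, so $W$ is non-thin. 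Consequently your closed-form scalars $\beta_i=3(d-i+1)$, $\gamma_i=i+1$ do not describe the general module, and your verification of \eqref{eq11}, which is carried out only on thin modules of Hamming type, does not cover all irreducible $T$-modules; Proposition~\ref{ortho} then cannot be invoked. (Note there is no conflict with Theorem~\ref{oldpaper}: that theorem forces the irreducible $T_f$-modules to be thin, not the $T$-modules.) The paper's proof instead expands $RL^2$, $LRL$, $L^2R$ on Tanabe's two-parameter basis using the explicit actions $Lw_{\ell,j}=3(\delta-\ell+1)w_{\ell-1,j}+(p-j+1)w_{\ell,j-1}$ and $Rw_{\ell,j}=3(j+1)w_{\ell,j+1}+(\ell+1)w_{\ell+1,j}$, and checks the identity coefficient by coefficient.

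Your underlying intuition that the Doob case should reduce to the Hamming case can be salvaged, but by a different mechanism than transporting $T$-module data: since the uniform property only involves $\Ga_f$, $L$, $R$, and the dual idempotents, it suffices to observe that $D(n,m)_f(x)\cong H(2n+m,4)_f(y)$ (the Shrikhande factor and $K_4\,\square\,K_4$ have isomorphic distance-layer bipartite graphs), and then quote Theorem~\ref{thm:Hamm}; this is exactly the alternative argument sketched in the remark following the theorem in the paper. Your determinant computation for the parameter matrix is correct and agrees with the paper, but as written the proposal has a genuine gap in establishing \eqref{eq11}.
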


\begin{proof}
Let $W$ denote an irreducible $T$-module with endpoint $r$. Then, by \cite[Theorem 1, Proposition 3]{terrible}, there exist certain non-negative integers $\delta$, $p$ (subject to certain restrictions given in \cite[Theorem 1]{terrible}), such that $W$ has basis $\{w_{\ell,j} \; | \; 0\leq \ell \leq \delta, \; 0\leq j \leq p\}$ with $w_{\ell,j} \in \e_{r+\ell+j}W$. The action of $L$ and $R$ on this basis is given by 
\begin{equation}\label{doobL}
Lw_{\ell,j}=3(\delta-\ell+1)w_{\ell-1,j}+(p-j+1)w_{\ell,j-1},
\end{equation} 
and
\begin{equation}\label{doobR}
Rw_{\ell,j}=3(j+1)w_{\ell,j+1}+(\ell+1)w_{\ell+1,j},
\end{equation}
where $w_{\ell,j}$ is defined to be zero if $(\ell,j) \notin \{0,\ldots, \delta\}\times\{0,\ldots,p\}$, see \cite[Proposition 3]{terrible}. Note that if $(\ell,j) \notin \{0,\ldots, \delta\}\times\{0,\ldots,p\}$, equations \eqref{doobL} and \eqref{doobR} still hold.  We claim that the left-hand side and the right-hand side of \eqref{eq11} agree when applied to $w_{\ell,j}$. Using \eqref{doobL} and \eqref{doobR}, straightforward calculations show that
\begin{eqnarray}
RL^2w_{\ell,j}&=&27(\delta-\ell+1)(\delta-\ell+2)(j+1)w_{\ell-2,j+1}+ \\ \nonumber 
&&9(\delta-\ell+1)((\delta-\ell+2)(\ell-1)+2j(p-j+1))w_{\ell-1,j}+\\ \nonumber
&&3(p-j+1)(2\ell(\delta-\ell+1)+(j-1)(p-j+2))w_{\ell,j-1}+\\ \nonumber
&&(p-j+1)(p-j+2)(\ell+1)w_{\ell+1,j-2}; \nonumber
\end{eqnarray}
\begin{eqnarray}
LRLw_{\ell,j}&=&27(\delta-\ell+1)(\delta-\ell+2)(j+1)w_{\ell-2,j+1}+\\ \nonumber 
&&9(\delta-\ell+1)(\ell(\delta-\ell+1)+2j(p-j)+p)w_{\ell-1,j}+\\ \nonumber
&&3(p-j+1)(\delta(2\ell+1)-2\ell^2-j(j-p-1))w_{\ell,j-1}+\\ \nonumber
&&(p-j+1)(p-j+2)(\ell+1)w_{\ell+1,j-2};
\end{eqnarray}
\begin{eqnarray}
L^2Rw_{\ell,j}&=&27(\delta-\ell+1)(\delta-\ell+2)(j+1)w_{\ell-2,j+1}+\\ \nonumber 
&&9(\delta-\ell+1)((\delta-\ell)(\ell+1)+2(j+1)(p-j))w_{\ell-1,j}+\\ \nonumber
&&3(p-j+1)(2(\delta-\ell)(\ell+1)+(j+1)(p-j))w_{\ell,j-1}+\\ \nonumber
&&(p-j+1)(p-j+2)(\ell+1)w_{\ell+1,j-2}.
\end{eqnarray}
It follows from the above computations that the claim is true.  As $V$ is an orthogonal direct sum of irreducible $T$-modules, this shows that \eqref{eq11} is satisfied on $\e_i V$ for $1 \le i \le D$.

Recall the scalars $e_i^-$, $e_i^+$, and let $U$ be the corresponding parameter matrix from Definition~\ref{3diag}. Clearly  $e_i^{-}\neq 0 \; (2 \le i \le D)$ and $e_i^{+}\neq 0 \; (1 \le i \le D-1)$. For $1\leq s\leq t\leq D$, let $P=(e_{ij})_{s\leq i,j \leq t}$ denote the principal submatrix of $U$. A simple linear recurrence argument shows that $\det(P)=\frac{t-s+2}{2^{t-s+1}}$, implying that $P$ is non-singular. In virtue of Proposition~\ref{ortho}, $\Ga$ supports a strongly uniform structure with respect to $x$.  

\end{proof}

\begin{remark}
Let $G$ and $H$ be arbitrary connected graphs. Fix a vertex $x$ of $G$ and a vertex $y$ of $H$. Using the definition of adjacency in the Cartesian product $G \square H$, one can show that 
$$
(G\square H)_f((x,y))=G_f(x)\square H_f(y).
$$
Let $S$ denote the Shrikhande graph, and fix a vertex $x$ of $S$. Similarly, fix a vertex $y$ of the Cartesian product $K_4\square K_4$. It is not complicated to check that the graphs $S_f(x)$ and $(K_4\square K_4)_f(y)$ are isomorphic. 

Recall our Doob graph $D(n,m)$, and fix a vertex $x$ of $D(n,m)$. Furthermore, fix a vertex $y$ of the Hamming graph $H(2n+m, 4)$. Using the above observations, it easily follows that 
$$
D(n,m)_f(x) \cong H(2n+m, 4)_f(y),
$$
which, combined with Theorem \ref{thm:Hamm}, provides an alternative proof of Theorem \ref{thm:Doob}.
\end{remark}

\noindent
We finish the paper by summarizing the results of Section \ref{sec:q=1}.

\begin{theorem}\label{thm:main}
With reference to Notation~\ref{not3.2}, assume that $\Ga$ has classical parameters $(D, 1, \alpha, \beta)$. Then, $\Ga$ supports a uniform structure with respect to $x$ if and only if $\Ga$ is one of the following graphs:
\begin{enumerate}[label=(\roman*)]
		\item Hamming graph $H(D,n)$, $D\ge 3$;
    	\item Halved cube  $\frac{1}{2}H(n,2)$ with $n \ge 7$ odd;
    	\item Doob graph $D(n,m)$ with $n \ge 1$, $m \ge 0$ and $2n+m \ge 3$.
\end{enumerate}
Moreover, the above graphs support a strongly uniform structure with respect to $x$.
\end{theorem}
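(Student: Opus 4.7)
The plan is to combine the classification of distance-regular graphs with classical parameters $(D,1,\alpha,\beta)$ stated at the beginning of Section~\ref{sec:q=1} with the family-by-family analysis carried out in Subsections 9.1 through 9.5. Since the five listed families exhaust all such graphs, both directions of the equivalence reduce to a finite case check, and I need only assemble the results already proved.

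For the ``only if'' direction, I would first eliminate the two families absent from items (i)--(iii). Johnson graphs $J(n,D)$ with $n\ge 2D$ fail to support a uniform structure by Theorem~\ref{johnson}, which covers both the tight case $n=2D$ (via Corollary~\ref{tttight}) and the non-tight case $n>2D$ (via a direct comparison of the two ``extremal'' endpoint-$1$ modules using Proposition~\ref{tilde}). The Gosset graph and the halved cube $\tfrac{1}{2}H(n,2)$ with $n$ even are tight, so Corollary~\ref{tttight} (together with Theorem~\ref{ttight}) shows that neither supports a uniform structure. What remains from the classification is precisely the Hamming graphs, the halved cubes with $n$ odd, and the Doob graphs.

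For the ``if'' direction, existence of a strongly uniform structure on each listed graph has already been established by explicit construction of a parameter matrix $U$ and a vector $f$ satisfying \eqref{uniformeq}: Theorem~\ref{thm:Hamm} for $H(D,n)$ (noting that $n\ge 3$ is implicit since $H(D,2)$ is bipartite and hence excluded by Notation~\ref{not3.2}), Theorem~\ref{thm:H-odd} for $\tfrac{1}{2}H(n,2)$ with $n$ odd (where $n\ge 7$ is forced by $D=\lfloor n/2\rfloor\ge 3$), and Theorem~\ref{thm:Doob} for $D(n,m)$ (where the assumption $2n+m\ge 3$ guarantees $D\ge 3$). In each of these three cases the stronger uniform property was also verified, so the ``moreover'' clause of the theorem follows automatically.

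Since all of the difficult work---the nonexistence arguments based on Theorem~\ref{oldpaper} and the $T$-module structure, together with the explicit tridiagonal parameter matrices whose nonsingularity was checked via linear recurrence---has already been carried out in the preceding subsections, no substantial obstacle remains. The main thing I would still want to verify carefully is bookkeeping: that the classification, the non-bipartiteness hypothesis built into Notation~\ref{not3.2}, and the diameter constraint $D\ge 3$ combine to isolate exactly the three families appearing in (i)--(iii) and no others, so that the biconditional is tight on both sides.
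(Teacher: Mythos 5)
Your proposal is correct and follows essentially the same route as the paper: the paper's proof of Theorem~\ref{thm:main} simply assembles the classification of $q=1$ graphs with the positive results (Theorems~\ref{thm:Hamm}, \ref{thm:H-odd}, \ref{thm:Doob}) and the nonexistence results (Theorem~\ref{ttight}, Corollary~\ref{tttight}, Theorem~\ref{johnson}) established earlier in Section~\ref{sec:q=1}. If anything, your bookkeeping of the ``only if'' direction (explicitly citing Theorem~\ref{johnson} and Corollary~\ref{tttight}, and noting how non-bipartiteness and $D\ge 3$ force $n\ge 3$, $n\ge 7$ odd, and $2n+m\ge 3$) is spelled out more fully than the paper's one-line proof, which cites only the three positive theorems.
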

\begin{proof}
	Immediately follows from Theorem \ref{thm:Hamm}, Theorem \ref{thm:H-odd}, and Theorem \ref{thm:Doob}.
\end{proof}

\subsection*{Acknowledgment}
Blas Fernández's work is supported in part by the Slovenian Research Agency (research program P1-0285, research projects J1-2451, J1-3001 and J1-4008). \v{S}tefko  Miklavi\v{c}'s  research is supported in part by the Slovenian Research Agency (research program P1-0285 and research projects J1-1695, N1-0140, N1-0159, J1-2451, N1-0208, J1-3001, J1-3003, J1-4008 and J1-4084).
Roghayeh Maleki and Giusy Monzillo's research are supported in part by the Ministry of Education, Science and Sport of Republic of Slovenia (University of Primorska Developmental funding pillar).


 \end{document}